\DeclareMathAlphabet\EuFrak{U}{euf}{m}{n}	
\SetMathAlphabet\EuFrak{bold}{U}{euf}{b}{n}	
\newcommand{\hra}{\hookrightarrow}
\newcommand{\ovl}{\overline}
\newcommand{\wa}{\widehat}
\newcommand{\sC}{{\it C*}-}
\newcommand{\bC}{{\mathbb C}}
\newcommand{\bZ}{{\mathbb Z}}
\newcommand{\bN}{{\mathbb N}}
\newcommand{\bP}{{\mathbb P}}
\newcommand{\bS}{{\mathbb S}}
\newcommand{\bSO}{{\mathbb{SO}}}
\newcommand{\ud}{{{\mathbb U}(d)}}
\newcommand{\sud}{{{\mathbb {SU}}(d)}}
\newcommand{\eps}{\varepsilon}
\newcommand{\mA}{\mathcal A}
\newcommand{\mB}{\mathcal B}
\newcommand{\mC}{\mathcal C}
\newcommand{\mE}{E}
\newcommand{\mF}{\mathcal F}
\newcommand{\mH}{\mathcal H}
\newcommand{\mL}{\mathcal L}
\newcommand{\mM}{\mathcal M}
\newcommand{\mN}{\mathcal N}
\newcommand{\mO}{\mathcal O}
\newcommand{\mR}{\mathcal R}
\newcommand{\mS}{\mathcal S}
\newcommand{\mZ}{\mathcal Z}
\newcommand{\mG}{\mathcal G}
\newcommand{\mcE}{\mathcal E}
\newcommand{\zro}{C(X^\rho)}
\newcommand{\oro}{\mO_\rho}
\newcommand{\soro}{{\mO_{\rho,\eps}}}
\newcommand{\ii}{\iota,\iota}
\newcommand{\mrs}{\mM^r,\mM^s}
\newcommand{\ers}{\mE^r,\mE^s}
\newcommand{\rhors}{\rho^r , \rho^s}
\newcommand{\taurs}{\tau_\mG^r , \tau_\mG^s}
\newcommand{\smrs}{\tau_\mM^r , \tau_\mM^s}
\newcommand{\sgrs}{\sigma_\mG^r,\sigma_\mG^s}
\newcommand{\srs}{\sigma^r , \sigma^s}
\newcommand{\sers}{\sigma_\mE^r , \sigma_\mE^s}
\newcommand{\wE}{\wa {\mE}}
\newcommand{\mcSUE}{{\mathcal {SU} \mE}}
\newcommand{\mcUE}{\mathcal {U} \mE}
\newcommand{\mSUE}{ {{\bf{SU}} \mE} }
\newcommand{\mSG}{ \mathcal{SG} }
\newcommand{\mc}{\mA \rtimes_\mu \wa \mG }
\newcommand{\coe}{\mO_{\mE}}
\newcommand{\com}{\mO_{\mM}}
\newcommand{\cog}{\mO_{\mG}}
\newcommand{\cmg}{\mO_\mM^\mG}
\newtheorem{thm}{Theorem}[section]
\newtheorem{lem}[thm]{Lemma}
\newtheorem{prop}[thm]{Proposition}
\newtheorem{defn}[thm]{Definition}
\theoremstyle{definition}
\newtheorem{ex}{Example}[section]
\theoremstyle{remark}
\newtheorem{rem}{Remark}[section]
\numberwithin{equation}{section}
\begin{document}

\author{{\sf Ezio Vasselli}
                         \\{\it Dipartimento di Matematica}
                         \\{\it University of Rome "La Sapienza"}
			 \\{\it P.le Aldo Moro, 2 - 00185 Roma - Italy }
                         \\{\sf vasselli@mat.uniroma2.it}}

\title{ Gauge-equivariant Hilbert bimodules\\and\\crossed products by endomorphisms}
\maketitle

\begin{abstract}
\sC endomorphisms arising from superselection structures with non-trivial centre define a 'rank' and a 'first Chern class'. Crossed products by such endomorphisms involve the Cuntz-Pimsner algebra of a vector bundle having the above-mentioned rank and first Chern class, and can be used to construct a duality for abstract (nonsymmetric) tensor categories vs. group bundles acting on (nonsymmetric) Hilbert bimodules. Existence and unicity of the dual object (i.e., the 'gauge' group bundle) are not ensured: we give a description of this phenomenon in terms of a certain moduli space associated with the given endomorphism. The above-mentioned Hilbert bimodules are noncommutative analogues of gauge-equivariant vector bundles in the sense of Nistor-Troitsky.

\

\noindent {\em AMS Subj. Class.:} 46L05, 46L08, 22D35.

\noindent {\em Keywords:} Pimsner algebras; Crossed Products; $C_0(X)$-algebras; Tensor \sC categories; Vector bundles.

\end{abstract}


\section{Introduction.}
\label{intro}

One of the novelties introduced in the algebraic approach to quantum field theory, in particular in the case of localized quantum charges, is the way in which superselection sectors are regarded. In fact, whilst they are usually defined as representations of the observable algebra, in the algebraic approach they become {\em endomorphisms} of the observable algebra. The main advantage of this point of view is that the phenomenon of composition of charges is conveniently described in terms of composition of endomorphisms.
Of course not all the endomorphisms of the observable algebra, that we denote by $\mA$, have a physical interpretation. One of the features characterizing the ones of physical interest is the {\em symmetry}, which describes the statistical properties of the sector in terms of a unitary representation $\eps$ of the permutation group in $\mA$ (see \cite{DR87,DR89A,DR90} and related references).

Now, if $\rho \in {\bf end} \mA$ is an endomorphism associated with a sector, and  
\begin{equation}
\label{def_int}
( \rhors ) :=
\left\{
t \in \mA : t \rho^r (a) = \rho^s (a) t 
\ , \
a \in \mA
\right\}
\ \ , \ \
r,s \in \bN
\ ,
\end{equation}
are the intertwiner spaces, then the Doplicher-Roberts theory allows one to construct a crossed product of $\mA$ by $\rho$, in terms of a \sC algebra $\mF$ generated by $\mA$ and orthogonal partial isometries $\psi_1 , \ldots ,$ $\psi_d$, $d \in \bN$, such that 
\begin{equation}
\label{eq_inn}
\rho (a) = \sum_i \psi_i a \psi_i^*
\ \ , \ \
a \in \mA
\end{equation}
(see \cite{DR89,DR89A,DR90}). A remarkable property is that $\mF$ comes equipped with a compact group action $G \to {\bf aut} \mF$ with fixed-point algebra $\mA$, such that each $( \rhors )$, $r,s \in \bN$, is interpreted as the space of $G$-invariant operators between tensor powers of a Hilbert space of dimension $d$.
In physical terms, $\mF$ plays the role of the field algebra, and $G$ is the gauge group describing the superselection structure of sectors $\rho^r$, $r \in \bN$. The pair $( \mF,G)$ is uniquely determined by the \sC dynamical system $(\mA,\rho)$.

At the mathematical level, one of the crucial properties required for the construction of $\mF$ is triviality of the centre of $\mA$. In recent times, the more general situation in which $\mA$ has a nontrivial centre $\mZ$ has been considered, sometimes coupled with a braided symmetry (in the context of low-dimensional quantum field theory, \cite{MS90}), other times in the presence of a weak form of permutation symmetry, that we call permutation {\em quasi-symmetry} (see \cite{BL97,BL04,Vas05}, or \S \ref{key_ps} of the present paper): roughly speaking, with this we mean that not all the elements of $(\rhors)$, $r,s \in \bN$, necessarily fulfill permutation symmetry.

In the present paper we present a theory for endomorphisms with permutation quasi-symmetry, following the research line of \cite{Vas04,Vas05,Vas02g}. Differently from former works, we do not make any assumption on the structure of the spaces $(\rhors)$ (these were supposed to be free $\mZ$-bimodules in \cite{BL97,BL04}, and locally trivial fields of Banach spaces in \cite{Vas06p}).

Our main construction yields a crossed product $\mF$ generated by $\mA$ and a Hilbert $\mZ$-bimodule $\mM$, whose elements play a role analogous to (\ref{eq_inn}). The bimodule $\mM$ is generally not free -- and this implies that $\rho$ does not fulfill the {\em special conjugate property} in the sense of \cite{DR89A} -- , but also non-symmetric, in the sense that the left and right $\mZ$-actions may not coincide; indeed, these coincide if and only if $\rho$ has symmetry in the usual sense. Moreover, the relation
\begin{equation}
\label{eq_relcom}
\mA' \cap \mF = \mZ
\end{equation}
is fulfilled, in accord with the principle stated in \cite{MS90} in the setting of low dimensional quantum field theory. In this case, we say that $\mF$ is a {\em Hilbert extension} of $( \mA , \rho )$.

Some facts have to be remarked. First, if we denote the fixed-point algebra of $\mZ$ with respect to the $\rho$-action by $\zro$, then we find that $\zro$ is contained in the centre of $\mA$ and $\mF$. This implies that $\mA$ and $\mF$ have a natural structure of $\zro$-algebra in the sense of Kasparov (\cite{Kas88}), with the consequence that we may regard them as bundles of \sC algebras over $X^\rho$.
Secondly, instead of a compact group we obtain a group bundle $\mG \to X^\rho$ playing the role of the gauge group; the action of $\mG$ on $\mF$ is defined in terms of to the notion of {\em fibred action} introduced in \cite{Vas02g} (roughly speaking, each fibre of $\mG$ acts on the corresponding fibre of $\mF$; in the present paper we will use the expression {\em gauge action} instead of the one of {\em fibred action}).
Finally, a crucial fact is that existence and unicity of $( \mF , \mG )$ are not ensured; we give a complete description of this phenomenon in terms of the space of sections of a certain bundle of homogeneous spaces associated with $\rho$. 

\

The present paper is organized as follows.

In \S \ref{key_group_duals} we study some properties of gauge actions on the Cuntz-Pimsner algebra of a vector bundle; this will yield our model for symmetric endomorphisms.

In \S \ref{sec_fa_hb} we introduce the notion of {\em gauge-equivariant Hilbert bimodule}. In our approach to the construction of $\mF$, gauge-equivariant Hilbert bimodules shall substitute the Hilbert spaces generated by the above-mentioned partial isometries $\psi_i$, $i = 1 , \ldots , d$. Moreover, we introduce the notion of gauge-equivariant Kasparov module, generalizing the ones of gauge-equivariant vector bundle and gauge-invariant Fredholm operator in the sense of \cite{NT04}.

In \S \ref{sec_dual_action} we analyze the way in which gauge actions interact with {\em dual actions}. If $\mE \to X$ is a vector bundle and $\mG \to X$ is a bundle of unitary automorphisms of $\mE$, then a dual action on $(\mA,\rho)$ is a functor $\mu : \wa \mG \to \wa \rho$, where $\wa \mG$ is the category with objects the tensor powers $\mE^r$, $r \in \bN$, and arrows the spaces $(\ers)_\mG$ of $\mG$-invariant morphisms from $\mE^r$ to $\mE^s$, and $\wa \rho$ is the category with objects $\rho^r$, $r \in \bN$, and arrows $(\rhors)$. Applying a variant of the construction in \cite[\S 3]{Vas05}, we construct a crossed product \sC algebra $\mc$ equipped with a gauge $\mG$-action and a gauge-equivariant Hilbert $\mZ$-bimodule $\mM \subset \mc$. This bimodule is generally non-symmetric, and may be regarded as a tensor product of the type $\wE \otimes_{C(X)} \mZ$, where $\wE$ is the module of sections of $\mE$. The pair $( \mA \rtimes_\mu \wa \mG  , \mG )$ is our model for Hilbert extensions of $(\mA,\rho)$.

In \S \ref{cp_spec} we prove our main results. The starting point is the fact that if $\rho$ is quasi-symmetric and fulfills a twisted version of the special conjugate property (\S \ref{sec_spe}), then every vector bundle $\mE \to X^\rho$ with suitable rank and first Chern class induces a dual action $\mu : \wa \mcSUE \to \wa \rho$, where $\mcSUE$ is the bundle of special unitaries of $\mE$. The spectrum of the centre of $\mA \rtimes_\mu \wa \mcSUE$ defines by Gel'fand duality a bundle $\Omega_E \to X^\rho$. In Theorem \ref{thm_dual}, we show that the space of sections of the type $s : X^\rho \hra \Omega_E$ is in one-to-one correspondence with the set of Hilbert extensions $( \mA \rtimes_\nu \wa \mG , \mG )$, $\mG \subseteq \mcSUE$. In Theorem \ref{thm_uni}, we consider Hilbert extensions $( \mA \rtimes_\nu \wa \mG , \mG )$, $( \mA \rtimes_{\nu'} \wa \mG' , \mG' )$ of $(\mA,\rho)$ and give a necessary and sufficient condition to get an isomorphism $\mG \simeq \mG'$. The third Theorem \ref{thm_moduli} yields a complete classification of Hilbert extensions of $(\mA,\rho)$ at varying of $\mE$. Finally, in Theorem \ref{car_rho} a duality is proved, characterizing each $( \rhors )$ as the space $(\mrs)_\mG$ of $\mG$-invariant operators between tensor powers of $\mM$; in particular, we show that the spaces $(\rhors)_\eps$ of intertwiners that fulfill permutation symmetry are isomorphic to the spaces $(\ers)_\mG$. Examples of non-existence and non-unicity of the Hilbert extension are given in \S \ref{ex_non_uni} and \S \ref{ex_non_exi}.

\section{Keywords and Notation.}
\label{preli}

For every set $S$, we denote the corresponding identity map by $id_S$.

\

About \sC categories and (semi)tensor \sC categories, we refer the reader to \cite{DR89,DPZ97}.

\

If $X$ is a compact Hausdorff space, then we denote the \sC algebra of continuous functions from $X$ to $\bC$ by $C(X)$. For each $x \in X$, we denote the closed ideal of functions vanishing on $x$ by $C_x(X)$. For every open $U \subset X$, we denote the ideal of functions vanishing on $X - U$ by $C_0(U) \subset C(X)$. If $\left\{ X_i  \right\}$ is an open cover of $X$, then we write $X_{ij} :=$ $X_i \cap X_j$.

\

A {\em bundle} is given by a surjective map of locally compact Hausdorff spaces $p : Y \to X$. The fibred product with a bundle $p' : Y' \to X$ is defined as the space 
\[
Y \times_X Y' \ := \ \{ (y,y') \in Y \times Y' : p(y) = p'(y')  \}
\ ,
\]
which becomes a bundle when endowed with the natural projection on $X$. The {\em fibre} of $Y$ over $x \in X$ is given by $Y_x :=$ $p^{-1}(x)$. A {\em section} of $Y$ is given by a continuous map $s : X \to Y$ such that $p \circ s = id_X$. The set of sections of $Y$ is denoted by $S_X(Y)$, and is endowed with the topology such that each map of the type $c_* : S_X(Y) \to C_0(X)$, $c_*(s) :=$ $c \circ s$, $c \in C_0(Y)$, is continuous.
For basic properties of {\em vector bundles}, we refer the reader to \cite{Ati,Kar}. In the present paper we assume that every vector bundle is endowed with a Hermitian structure. In particular, we will make use of the notion of {\em equivariant vector bundle} (\cite[\S 1.6]{Ati}), and {\em gauge-equivariant vector bundle} (\cite[\S 3]{NT04}).

\

Let $\rho \in {\bf end} \mA$, $\rho' \in {\bf end} \mA'$ be \sC endomorphisms. A \sC morphism $\eta : \mA \to \mA'$ such that $\eta \circ \rho = \rho' \circ \eta$ is denoted by $\eta : ( \mA , \rho ) \to ( \mA' , \rho' )$. We denote the identity automorphism by $\iota \in {\bf end} \mA$, and use the convention $\rho^0 := \iota$, $0 \in \bN$.

\

Let $X$ be a compact Hausdorff space. A unital $C(X)$-{\em algebra} $\mA$ is a unital \sC algebra endowed with a unital morphism $C(X) \to \mA' \cap \mA$, called the $C(X)$-{\em action} (see \cite{Kas88}). We assume that the $C(X)$-action is also injective, thus elements of $C(X)$ are regarded as elements of $\mA$. For every $x \in X$, the quotient $\pi_x : \mA \to$ $\mA_x :=$ $\mA / (C_x(X) \mA)$ is called the {\em fibre epimorphism}. 
For every open $U \subset X$, we define the {\em restriction} $\mA_U$ as the closed span of elements of the type $fa$, $f \in C_0(U)$, $a \in \mA$. Note that $\mA_U$ is a closed ideal of $\mA$.
\sC morphisms between $C(X)$-algebras equivariant with respect to the $C(X)$-actions are called $C(X)${\em -morphisms}. The set of $C(X)$-automorphisms (resp. $C(X)$-endomorphisms) of $\mA$ is denoted by ${\bf aut}_X \mA$ (resp. ${\bf end}_X \mA$).

The category of $C(X)$-algebras is equivalent to the one with objects certain topological objects called \sC {\em bundles}. A \sC bundle with base space $X$ is given by a surjective continuous map 
\[
Q : \Sigma \to X \ ,
\]
where $\Sigma$ is a Hausdorff space such that: (1) Every $\Sigma_x :=$ $Q^{-1}(x)$, $x \in X$, is homeomorphic to a unital \sC algebra; (2) $\Sigma$ is {\em full}, i.e. for every $v \in \Sigma$ there is a section $a : X \to \Sigma$, $Q \circ a = id_X$, such that $a (x) = v$; (3) The algebraic operations $(+,\cdot,*)$ are continuous on each fibre $\Sigma_x$. 
The equivalence with the category of $C(X)$-algebras is realized by recognizing that the set $S_X(\Sigma)$ of sections of a \sC bundle $\Sigma$ is a $C(X)$-algebra; on the other side, every $C(X)$-algebra $\mA$ defines a \sC bundle $\wa \mA$ given by the disjoint union $\wa \mA :=$ $\dot{\cup}_x \mA_x$ endowed with a suitable topology, in such a way that $\mA$ is isomorphic to $S_X(\wa \mA)$. If $\eta : \mA \to \mA$ is a $C(X)$-morphism, then we denote the associated morphism of \sC bundles by $\wa \eta : \wa \mA \to \wa{\mA'}$; $\wa \eta$ is determined by the relations $\wa \eta \circ \pi_x (a) =$ $\pi'_x \circ \eta (a)$, where $a \in \mA$, $x \in X$, and $\pi'_x : \mA' \to \mA'_x$ is the fibre epimorphism.
In particular, continuous bundles of \sC algebras are $C(X)$-algebras; the associated \sC bundles are characterized by the property that the projection $Q$ is open.
For details on this topics, see \cite{HK79}.

\

If $\mA$ is a \sC algebra and $\mM$ a right Hilbert $\mA$-module (\cite{Bla}), then we denote the group of unitary, right $\mA$-module operators by $U \mM$, and the \sC algebra of compact, right $\mA$-module operators by $K(\mM)$. Moreover, we denote the internal tensor product of Hilbert $\mA$-bimodules by $\otimes_\mA$. 
In the present paper we shall make use of the following construction. Let $\phi : \mC \to \mB$ be a nondegenerate \sC morphism and $\mN$ a right Hilbert $\mC$-module. The algebraic tensor product $\mN \odot_{\mC} \mB$ with coefficients in $\mC$ is endowed with a natural $\mB$-valued scalar product; we denote the right Hilbert $\mB$-module obtained by the corresponding completion by $\mN \otimes_{\mC} \mB$. In the sequel, we shall apply this construction to the following cases: (1) $\mC =$ $C(X)$, and $\mB$ is a unital $C(X)$-algebra; (2) $\mC$ is a $C(X)$-algebra, and $\mB = \mC_x$, $x \in X$, is the image of $\mC$ with respect to a fibre epimorphism.

\

About {\em Cuntz-Pimsner algebras} (\cite{Pim97}) we follow the categorical approach of \cite{DPZ97}. We recall that given $d \in \bN$ the Cuntz algebra $\mO_d$ is defined as the universal \sC algebra generated by a set $\left\{ \psi_i \right\}_{i=1}^d$ of orthogonal isometries with total support the identity (\cite{Cun77}); for every closed subgroup $G \subseteq \ud$, there is an automorphic action
\begin{equation}
\label{eq_act_od}
G \to {\bf aut} \mO_d 
\ \ , \ \ 
g \mapsto \wa g
\ : \
\wa g (\psi_i) := \sum_j g_{ij} \psi_j \ ,
\end{equation}
\noindent where $\left\{ g_{ij} \right\}$ is the set of matrix coefficients of $g$ (see \cite{DR87}).

\section{Group bundles and Cuntz-Pimsner algebras.}
\label{key_group_duals}

In the present section we give a different version of some results proved in \cite{Vas04}. Instead of focusing our attention to (noncompact) section groups acting on \sC algebras, we consider the underlying group bundles.
In particular, we make use of the notion of {\em fibred action} of a group bundle on a $C(X)$-algebra introduced in \cite{Vas02g}. 
There are several reasons for this change of scenario. First, it was kept in evidence in a previous work that the relevant properties of the \sC dynamical systems $( \cog , \sigma_\mG )$ in which we are interested depend on a group bundle rather than the group itself (see \cite[Definition 4.3]{Vas04}). Secondly, fibred actions need less technicalities, and have been used to prove a result that shall play an important role in the present paper (\cite[Theorem 6.1]{Vas02g}). 
Our approach is also motivated by a recent work of V. Nistor and E. Troitsky: the notions of fibred action on a $C(X)$-algebra and gauge-equivariant Hilbert bimodule (in the sense of the following \S \ref{sec_fa_hb}) may be regarded as noncommutative versions of the {\em gauge actions} introduced in \cite[\S 3]{NT04}. For this reason, but also to emphasize the role that group bundles will play in the present paper, we will use the term {\em gauge action} instead of {\em fibred action}.

Let $X$ be a compact Hausdorff space.
A {\em group bundle} is given by a bundle $p : \mG \to X$ such that each fibre $G_x :=$ $p^{-1}(x)$ is homeomorphic to a locally compact group. In general, we do not assume that $\mG$ is locally trivial, thus the isomorphism class of the fibres may vary at varying of $x$ in $X$.
We note that the \sC algebra $C_0(\mG)$ is a $C(X)$-algebra in a natural way, and recall from \cite{Vas02g} that an {\em invariant $C(X)$-functional} is a positive $C(X)$-module map $\varphi : C_0(\mG) \to$ $C(X)$ such that $\varphi z (x) = \int_{G_x} z (y) d \mu_x(y)$, $z \in C_0(\mG)$, where $\mu_x$ is a Haar measure of $G_x$.
The set $S_X(\mG)$ of sections of $\mG$ is endowed with a natural structure of topological group. A {\em section group} $G$ is a subgroup of $S_X(\mG)$ such that for every $y \in \mG$ there is $g \in G$ with $y = g \circ p(y)$. For example, if $G_0$ is a locally compact group and $\mG$ is the trivial bundle $X \times G_0$, then the group $C(X,G_0)$ of continuous maps from $X$ to $G_0$ is a section group; anyway, also the group of constant $G_0$-valued maps is a section group for $\mG$.

Let $\mG$ be a group bundle, and $\mA$ is a $C(X)$-algebra. A {\em gauge action} of $\mG$ on $\mA$ is a family of strongly continuous actions $\left\{ \alpha^x : G_x \to {\bf aut} \mA_x \right\}_{x \in X}$ such that the map
\begin{equation}
\label{def_fa}
\alpha : \mG \times_X \wa \mA \to \wa \mA
\ \ , \ \ 
\alpha (y,v) := \alpha^x_y (v)
\end{equation}
\noindent is continuous. A $C(X)$-algebra endowed with a gauge action is called $\mG$-$C(X)$-{\em algebra}. The fixed-point algebra $\mA^\alpha$ is given by the \sC subalgebra of those $a \in \mA$ such that $\alpha^x_y \circ \pi_x(a) =$ $\pi_x(a)$ for all $y \in \mG$, $x := p(y)$. If $\mG$ is endowed with an invariant $C(X)$-functional $\varphi$, then an invariant mean $m_\varphi : \mA \to \mA^\alpha$ is naturally defined. For each section group $G \subseteq S_X(\mG)$ there is an action
\begin{equation}
\label{def_ind_a}
\alpha^G : G \to {\bf aut}_X \mA
\end{equation}
\noindent such that $\pi_x \circ \alpha^G_g(a) =$ $\alpha^x_{g(x)} \circ \pi_x(a)$ for every $a \in \mA$, $g \in G$, $x := p(y)$. In particular, usual strongly continuous actions $\alpha_0 : G_0 \to$ ${\bf aut}_X \mA$ correspond to gauge actions of the trivial bundles $X \times G_0$.

The following class of examples shall play an important role in the present paper. Let us consider a rank $d$ vector bundle $\mE \to X$ with associated $\ud$-cocycle $( \left\{ X_i \right\} , \left\{ u_{ij}  \right\} ) \in$ $H^1 (X,\ud)$ (see \cite[I.3.5]{Kar}). For the rest of the present paper, $\wE$ will denote the Hilbert $C(X)$-bimodule of sections of $\mE$, and $\left\{ \psi_l \right\}$ a (finite) set of generators of $\wE$. 
The Cuntz-Pimsner algebra $\coe$ associated with $\wE$ (see \cite{Pim97}) can be described in terms of generators and relations:
\begin{equation}
\label{def_cp}
\psi_l^* \psi_m = \left \langle \psi_l , \psi_m \right \rangle
\ \ , \ \ 
f \psi_l = \psi_l f
\ \ , \ \ 
\sum_l^n \psi_l \psi_l^* = 1 \ ,
\end{equation}
\noindent where $f \in C(X)$, $\left \langle \cdot , \cdot \right \rangle$ is the $C(X)$-valued scalar product of $\wE$, and $1$ is the identity. Let $L :=$ $\left\{ l_1 , \ldots , l_r \right\}$ be a multi-index of length $r \in \bN$ (in such a case, we write $|L| = r$); defining
\begin{equation}
\label{def_psil}
\psi_L := \psi_{l_1} \cdots \psi_{l_r} \ ,
\end{equation}
\noindent we obtain an element of $\coe$ which can be regarded as an element of the $C(X)$-bimodule tensor product $\wE^r$.
Now, $\coe$ is a continuous bundle over $X$ with fibre the Cuntz algebra $\mO_d$ (see \cite{Vas04,Vas05}). The \sC bundle $\wa \mO_\mE \to X$ may be described as the clutching of the trivial bundles $X_i \times \mO_d$ with respect to the transition maps $( \left\{ X_i \right\} , \left\{ \wa u_{ij} \right\}) \in$ $H^1 ( X , {\bf aut} \mO_d )$ defined using (\ref{eq_act_od}). 
Let 
\[
p : \mcUE \to X
\]
denote the group bundle of unitary endomorphisms of $\mE$ (see \cite[I.4.8(c)]{Kar}), and  $\mG \subseteq \mcUE$ a compact group bundle. Then there is a natural action
\begin{equation}
\label{def_fae}
\mG \times_X \mE \to \mE
\end{equation}
in the sense of \cite[\S 3]{NT04}, in fact each fibre $\mcUE_x \simeq \ud$ acts by unitary operators on the corresponding fibre $\mE_x \simeq \bC^d$.
By universality of the Cuntz-Pimsner algebra, the action (\ref{def_fae}) extends to a gauge action
\begin{equation}
\label{24}
\mG \times_X \wa \mO_\mE \to \wa \mO_\mE
\ \ , \ \
(y,\xi) \mapsto \wa y (\xi)
\ ,
\end{equation}
\noindent which, fiberwise, behaves like (\ref{eq_act_od}). If there is a section group $G \subseteq$ $S_X(\mG)$ then we have an automorphic action $G \to {\bf aut}_X \coe$. An important example is the bundle $\mcSUE$ of special unitary endomorphisms of $\mE$, which induces the strongly continuous action $\mSUE \to {\bf aut}_X \coe$, where 
\[
\mSUE := S_X(\mcSUE)
\]
(see \cite{Vas04}). We denote the fixed-point algebra of $\coe$ with respect to the gauge action (\ref{24}) by $\cog$.
For future reference, we also introduce the canonical endomorphism
\begin{equation}
\label{def_ecoe}
\sigma_\mE (t) := \sum_l \psi_l t \psi_l^* \ \ , \ \ t \in \coe \ \ ;
\end{equation}
which fulfilles the relations $( \ers ) = ( \sers )$, $r,s \in \bN$. If $\mG \subseteq \mcUE$ then $\sigma_\mE$ restricts to an endomorphism $\sigma_\mG \in$ ${\bf end}_X \cog$ such that $( \ers )_\mG =$ $( \sgrs )$, $r,s \in \bN$ (see \cite[\S 4]{Vas04}).

Let us now denote the tensor category with objects $\mE^r$, $r \in \bN$, identity object $\iota :=$ $\mE^0 :=$ $X \times \bC$, and arrows $(\ers)$, $r,s \in \bN$ by $\mE^\otimes$ (so that, $( \iota , \iota ) = C(X)$). By the Serre-Swan equivalence, $(\ers)$ is the set of $C(X)$-module operators from $\wE^r$ to $\wE^s$, and can be interpreted as the module of sections of a vector bundle $\mE^{r,s} \to X$ (see \cite[I.4.8(c)]{Kar}); in this way, every $t \in (\ers)$ can be regarded as a map $t : X \hra \mE^{rs}$. By using the notation (\ref{def_psil}), we also find
\[
(\ers) = 
{\mathrm{span}}
\left\{ 
f \psi_L \psi_M^*
 \ , \ 
f \in C(X) ,
|L| = s , |M| = r
\right\}
\ \ ;
\]
\noindent in fact, $\psi_L \psi_M^*$ can be naturally identified with the operator $\theta_{LM} \varphi :=$ $\psi_L \left \langle \psi_M , \varphi \right \rangle$, $\varphi \in$ $\wE^r$. Since each $\psi_L$ belongs to $\coe$, every $(\ers)$ can be regarded as a subspace of $\coe$.
Let us now consider the gauge action (\ref{def_fae}), and the spaces of invariant morphisms
\begin{equation}
\label{def_ersg}
(\ers)_\mG 
:=
\left\{
t \in (\ers) : \ y^s \cdot t(x) = t(x) \cdot y^r  \
\ , \ y \in \mG , x := p(y)
\right\}
\ ,
\end{equation}
\noindent where $y^r$ denotes the $r$-fold tensor power of $y$ as a linear operator on the fibre $\mE_x \simeq$ $\bC^d$. 
Then we can define a tensor category $\wa \mG$ with objects $\mE^r$, $r \in \bN$, and arrows $( \ers )_\mG$. Note that $(\ii)_\mG =$ $C(X)$, and that $\wa \mG$ is symmetric in the sense of \cite{DR89}; in fact, the symmetry operator $\theta \in ( \mE^2 , \mE^2 )$, $\theta \psi \psi' := \psi' \psi$, $\psi , \psi \in \wE$, belongs to $( \mE^2 , \mE^2 )_\mG$ for every $\mG \subseteq \mcUE$.

When $X$ reduces to a point, $\coe$ is the Cuntz algebra $\mO_d$ and $\wa G$ is the category of tensor powers of the defining representation of a compact Lie group $G \subseteq \ud$ (see \cite{DR87}).

The following Lemma \ref{lem_og} can be proved using a standard argument based on the mean $m_\varphi : \coe \to \cog$ induced by the invariant functional $\varphi : C(\mG) \to C(X)$, whilst the successive Lemma \ref{lem_str_og} is just a "fibred version" of \cite[Cor.4.9]{Vas04}; so that, in both the cases we omit the proof.

\begin{lem}
\label{lem_og}
Let $\varphi : C(\mG) \to C(X)$ be an invariant functional. Then the set $^0 \cog := \cup_{r,s} (\ers)_\mG$ is dense in $\cog$.
\end{lem}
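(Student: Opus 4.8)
The plan is to reduce the statement to two ingredients and then combine them formally: the density in $\coe$ of the analogous \emph{algebraic} subalgebra ${}^0\coe := \cup_{r,s}(\ers)$, and the fact that the invariant mean $m_\varphi$ is a continuous projection of $\coe$ onto $\cog$ that \emph{respects the $(r,s)$-grading}. Granting these, the result is immediate: $m_\varphi$ carries a dense subset of $\coe$ onto a dense subset of $\cog$, and that image lands inside ${}^0\cog$.

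First I would recall that ${}^0\coe$ is dense in $\coe$. Indeed $\coe$ is generated as a \sC algebra by $C(X)$ and the generators $\psi_l$ of $\wE$, and the relations (\ref{def_cp}) — in particular $\psi_l^*\psi_m = \langle \psi_l , \psi_m \rangle \in C(X)$ and $\sum_l \psi_l \psi_l^* = 1$ — allow one to rewrite any word in the $\psi_l$, $\psi_m^*$ and elements of $C(X)$ in the normal form $f \psi_L \psi_M^*$. Hence the $*$-algebra generated by $C(X)$ and the $\psi_l$ is exactly the linear span of such elements, which is ${}^0\coe$, and this span is dense by definition of $\coe$.

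The crucial point is the compatibility of $m_\varphi$ with the grading. The invariant $C(X)$-functional $\varphi$ yields, by the invariant-mean construction of this section, a norm-decreasing, $C(X)$-linear conditional expectation $m_\varphi : \coe \to \cog$ onto the fixed-point algebra, acting fiberwise as averaging against the Haar measure, $\pi_x \circ m_\varphi (t) = \int_{G_x} \alpha^x_y(\pi_x(t)) \, d\mu_x(y)$. Now the gauge action (\ref{24}) acts fiberwise on the generators by (\ref{eq_act_od}), sending each $\psi_l$ to a $C(X)$-linear combination of the $\psi_m$; it therefore maps each space $(\ers)$ — the span of the $f \psi_L \psi_M^*$ with $|L| = s$, $|M| = r$ — into itself. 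Consequently the fiberwise average of an element of $(\ers)$ stays in $(\ers)$ and, being $\mG$-invariant in the sense of (\ref{def_ersg}), lies in $(\ers)_\mG$. Thus $m_\varphi\big((\ers)\big) \subseteq (\ers)_\mG$, whence $m_\varphi\big({}^0\coe\big) \subseteq {}^0\cog$.

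Finally, since $m_\varphi$ is continuous with range exactly $\cog$, it sends the dense subset ${}^0\coe$ to a dense subset of $\cog$; as $m_\varphi\big({}^0\coe\big) \subseteq {}^0\cog \subseteq \cog$, the set ${}^0\cog$ is dense in $\cog$, which is the assertion. I expect the only genuine point requiring verification to be that the gauge action preserves each $(\ers)$ — equivalently, that $m_\varphi$ respects the grading; everything else follows routinely from the normal-form description of $\coe$ and the standard properties of an invariant mean.
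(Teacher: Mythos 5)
Your proof is correct and follows exactly the route the paper indicates (and then omits): density of the linear span of $\cup_{r,s}(\ers)$ in $\coe$, plus the fact that the invariant mean $m_\varphi : \coe \to \cog$ is a continuous projection onto the fixed-point algebra that maps each $(\ers)$ into $(\ers)_\mG$. The only cosmetic point is that $\cup_{r,s}(\ers)_\mG$ should be read as a linear span (it is a directed union only along fixed $s-r$), but this does not affect the argument.
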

%
%
%
%
\begin{lem}
\label{lem_str_og}
Let $\mG$, $\mG' \subseteq$ $\mcUE$ be compact group bundles, and $u \in S_X (\mcUE)$. If $\wa u \in {\bf aut}_X \coe$ restricts to an isomorphism from $\cog$ onto $\mO_{\mG'}$, then $\mG' = u \mG u^* :=$ $\left\{  u(x) G_x u(x)^* , x \in X  \right\}$.
On the converse, if $\mG' = u \mG u^*$, then $\wa u \in$ ${\bf aut}_X \coe$ restricts to an isomorphism $\mO_{\mG'} \to \cog$. 
\end{lem}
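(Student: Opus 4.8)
The plan is to treat the two implications by different mechanisms: the converse by a direct computation on the spectral subspaces $(\ers)_\mG$, and the forward implication by localising everything at the fibres and invoking the corresponding statement over a point.

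For the converse, suppose $\mG' = u \mG u^*$. Since $\wa u \in {\bf aut}_X \coe$ acts on each fibre $(\coe)_x \simeq \mO_d$ as the automorphism $\wa{u(x)}$ induced by $u(x)$ via (\ref{eq_act_od}), its effect on an arrow $t \in (\ers)$ --- regarded through the Serre--Swan picture as a section $x \mapsto t(x)$ of operators $\mE_x^{\otimes r} \to \mE_x^{\otimes s}$ --- is conjugation by tensor powers,
\[
\wa u(t)(x) = u(x)^s\, t(x)\, \big( u(x)^* \big)^r ,
\]
where $u(x)^r$ denotes the $r$-fold tensor power as in (\ref{def_ersg}). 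Writing each $y' \in G'_x$ as $u(x)\, y\, u(x)^*$ with $y \in G_x$ and substituting, the defining intertwining relation $y^s t(x) = t(x)\, y^r$ of $(\ers)_\mG$ transforms into the corresponding relation for $\mG'$; hence $\wa u$ maps $(\ers)_\mG$ into $(\ers)_{\mG'}$. Applying the same argument to $\wa u^{-1} = \wa{u^*}$ gives $\wa u\big( (\ers)_\mG \big) = (\ers)_{\mG'}$ for all $r,s$, and since $\wa u$ is isometric, Lemma \ref{lem_og} upgrades this to $\wa u(\cog) = \mO_{\mG'}$.

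For the forward implication, assume $\wa u(\cog) = \mO_{\mG'}$. The first step is to show that the fixed-point construction localises, i.e. $\pi_x(\cog) = \mO_{G_x}$, the fixed-point algebra of $(\coe)_x \simeq \mO_d$ under $G_x \subseteq \ud$. The inclusion $\subseteq$ is clear; for the reverse one uses the invariant mean $m_\varphi : \coe \to \cog$, whose defining property $\varphi z(x) = \int_{G_x} z\, d\mu_x$ forces $\pi_x \circ m_\varphi$ to agree with the $G_x$-average on $(\coe)_x$, a projection of $\mO_d$ onto $\mO_{G_x}$; thus every $G_x$-fixed element of the fibre lifts into $\cog$. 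Because $\wa u$ is a $C(X)$-automorphism it satisfies $\pi_x \circ \wa u = \wa{u(x)} \circ \pi_x$, so applying $\pi_x$ to $\wa u(\cog) = \mO_{\mG'}$ and using the localisation yields $\wa{u(x)}(\mO_{G_x}) = \mO_{G'_x}$ in $\mO_d$. By the converse implication applied over the one-point base (equivalently \cite[Cor.4.9]{Vas04}) the left-hand side is $\mO_{u(x) G_x u(x)^*}$, so $\mO_{u(x) G_x u(x)^*} = \mO_{G'_x}$.

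It remains to reconstruct the subgroup from its fixed-point algebra: for every closed $H \subseteq \ud$ one has $H = \{ g \in \ud : \wa g\, t = t,\ t \in \mO_H \}$, the Tannaka-type duality underlying \cite{DR87}, so $\mO_H = \mO_K$ implies $H = K$. Applied fibrewise this gives $G'_x = u(x) G_x u(x)^*$ for every $x$, i.e. $\mG' = u \mG u^*$. I expect the genuine obstacle to be precisely the fibrewise reduction --- verifying $\pi_x(\cog) = \mO_{G_x}$ together with the compatibility $\pi_x \circ \wa u = \wa{u(x)} \circ \pi_x$ --- since this is exactly what promotes the one-point statement of \cite{Vas04} to the bundle setting; the group reconstruction is then imported as a black box from Doplicher--Roberts duality.
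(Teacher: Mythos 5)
The paper gives no proof of this lemma --- it declares it a ``fibred version'' of \cite[Cor.~4.9]{Vas04} and omits the argument --- so your strategy of localising at the fibres and importing the one-point statement is exactly the intended route, and your converse half is a correct computation (note only that what you prove, $\wa u(\cog)=\mO_{\mG'}$, is the version consistent with the forward implication; the direction $\mO_{\mG'}\to\cog$ printed in the statement appears to be a typo). You could even shorten that half: since $\cog$ is by definition the set of $t$ with $\pi_x(t)$ fixed under $G_x$ for every $x$, the fibrewise identity $\wa{u(x)}\circ\pi_x=\pi_x\circ\wa u$ together with your conjugation computation gives $\wa u(\cog)\subseteq\mO_{\mG'}$ directly, without routing through the spectral subspaces, Lemma~\ref{lem_og}, or the invariant functional.

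The forward half has a genuine gap: the localisation $\pi_x(\cog)=\mO_{G_x}$ that you identify as the crux is \emph{false} for a general compact subbundle $\mG\subseteq\mcUE$. The paper's own example at the end of \S\ref{key_group_duals} ($X=[0,1]$, $G_{1/2}=\{1\}$, $G_x=\sud$ otherwise) has $\pi_{1/2}(\cog)=\mO_\sud\subsetneq\mO_d=\mO_{G_{1/2}}$; correspondingly, no invariant $C(X)$-functional exists on that $C(\mG)$, so your mean argument cannot be run (existence of $\varphi$ is a hypothesis of Lemma~\ref{lem_og}, not a general fact, and Lemma~\ref{lem_sue} supplies it only for $\mcSUE$ itself). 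The hypothesis that actually rescues the implication is the standing convention $\mG=\mSG$ announced just after the lemma, which you never invoke. With it the argument is both valid and shorter: you do not need $\pi_x(\cog)=\mO_{G_x}$ at all, only that $G_x$ is the stabiliser of $\pi_x(\cog)$ in $\mcUE_x\simeq\ud$ (that is the definition of the spectral bundle). Applying $\pi_x$ to $\wa u(\cog)=\mO_{\mG'}$ gives $\wa{u(x)}\bigl(\pi_x(\cog)\bigr)=\pi_x(\mO_{\mG'})$; taking stabilisers and using equivariance of the stabiliser under conjugation yields $u(x)\,G_x\,u(x)^*=G'_x$ directly, with the Doplicher--Roberts reconstruction absorbed into the $\mG=\mSG$ convention rather than applied to the full fibrewise fixed-point algebra.
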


Now, $\cog$ is a continuous bundle over $X$ with fibres $(\cog)_x$, $x \in X$; we define
\[
\mSG :=
\left\{
u \in \mcUE : \wa u (t) = t \ , \ t \in (\cog)_{p(u)}
\right\} \ .
\]
It is clear that $\mG \subseteq \mSG$. The bundle $\mSG$ is called the {\em spectral bundle} associated with $\mG$, and may be regarded as a 'regularization' of $\mG$ (see the example below). For every $r,s \in \bN$ we have $( \ers )_\mSG = ( \ers )_\mG$ (see \cite[Lemma 4.10]{Vas04}); thus, at the level of the category $\wa \mG$ it is not possible to distinguish $\mG$ from $\mSG$. In the sequel of the present paper {\em we will always assume that} $\mG = \mSG$. As an example, take $X = [0,1]$, $\mE = X \times \bC^d$, $\mG =$ $\left\{ (x,u) \in X \times \sud : \right.$ $\left. x = 1/2 \Rightarrow u = 1 \right\}$; we find $\coe = C(X) \otimes \mO_d$ and $\cog =$ $C(X) \otimes \mO_\sud$, so that $\mG$ is strictly contained in $\mSG = X \times \sud$.

\section{Gauge-equivariant Hilbert bimodules.}
\label{sec_fa_hb}

In the present section we discuss some basic properties of the category of $C(X)$-Hilbert bimodules in the sense of Kasparov (\cite{Kas88}), and introduce the notion of gauge-equivariant Hilbert bimodule. This class of bimodules will yield the model category for the duality that we shall prove in \S \ref{duality}.
Since in the sequel we shall make use of unital \sC algebras, to simplify the exposition we discuss only the case in which the coefficient algebra of our bimodules is unital; the non-unital case will be approached in a future paper.

Let $\mA$, $\mB$ be unital $C(X)$-algebras. We denote the fibre epimorphisms of $\mA$, $\mB$ respectively by $\pi_x : \mA \to \mA_x$, $\pi'_x : \mB \to \mB_x$, $x \in X$, and the identity of $\mB$ by $1$.

A $C(X)$-{\em Hilbert} $\mA$-$\mB$-{\em bimodule} is given by a Hilbert $\mA$-$\mB$-bimodule such that $\psi f =$ $f \psi$, $f \in C(X)$, $\psi \in \mM$. 
We denote the category of $C(X)$-Hilbert $\mA$-$\mB$-bimodules by ${\bf bmod}_X ( \mA , \mB )$, with arrows the sets $( \mM , \mM' )$ of adjointable, (bounded) right $\mA$-module operators $T : \mM \to \mM'$, $\mM , \mM' \in$ ${\bf bmod}_X (\mA , \mB)$. In particular, the spaces of compact operators are denoted by $K(\mM,\mM')$.

By definition of $C(X)$-Hilbert bimodule, every $T \in$ $( \mM , \mM' )$ fulfilles the relations 
\[
T (f \psi) = T (\psi f) = T(\psi) f = f T(\psi)
\ \ , \ \
\psi \in \mM
\ , \
f \in C(X)
\ .
\]
This implies that $( \mM , \mM )$ is a $C(X)$-algebra, and the same is true for the ideal of compact operators $K(\mM) \subseteq$ $( \mM , \mM )$. The property of $\mM$ being a $C(X)$-Hilbert bimodule is translated as the fact that the left $\mA$-module action $\phi : \mA \to ( \mM , \mM )$ is a $C(X)$-morphism. To be concise, in the sequel we will write $a \psi \equiv \phi (a) \psi$, $a \in \mA$, $\psi \in \mM$; on the other side, the operator $\phi (a) \in (\mM , \mM)$ shall not be confused with $a \in \mA$.
If $\wa \mL_\mM \to X$ is the \sC bundle associated with $( \mM , \mM )$, then by general properties of $C(X)$-algebras there is a morphism
\begin{equation}
\label{eq_la_b}
\wa \phi : \wa \mA \to \wa \mL_\mM \ .
\end{equation}
If we denote the fibre epimorphisms of $( \mM , \mM )$ by
\begin{equation}
\label{def_fmm}
\delta_x : ( \mM , \mM ) \to \mL_{\mM,x} 
\ \ , \ \
x \in X
\ ,
\end{equation}
then $\wa \phi$ is determined by the relations $\wa \phi \circ \delta_x =$ $\delta_x \circ \phi$.
For every $x \in X$, we consider the right Hilbert $\mB_x$-module $\mM_x := \mM \otimes_\mB \mB_x$ and the associated map 
\begin{equation}
\label{def_emm}
\eta_x : \mM \to \mM_x 
\ , \
\eta_x (\psi) := \psi \otimes \eta_x(1) \ .
\end{equation}
Note that (\ref{def_emm}) is surjective: in fact $\psi \otimes w = \eta_x (\psi b)$, where $\psi \in \mM$, $w \in \mB_x$, and $b \in \pi_x^{-1} (w)$.

\begin{lem}
\label{lem_mx}
For every $x \in X$, the space $\mM_x$ has the following structure of Hilbert $\mA_x$-$\mB_x$-bimodule:
\[
\left\{
\begin{array}{ll}
(\psi \otimes w) w' := \psi \otimes (ww')
\\
\left \langle \psi \otimes w , \psi' \otimes w' \right \rangle_x
:=
\pi_x ( \left \langle \psi , \psi' \right \rangle ) \ w^* w'
\\
v (\psi \otimes w) := (a \psi) \otimes w  \ , \ \pi_x (a) = v
\ ,
\end{array}
\right.
\]
$\psi , \psi' \in \mM$, $v \in \wa \mA$, $w,w' \in \wa \mB$, $a \in \mA$. Moreover, for every $x \in X$ there is a natural isomorphism $\mL_{\mM,x} \simeq ( \mM_x , \mM_x )$.
\end{lem}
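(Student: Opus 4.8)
I would split the statement into the bimodule axioms for $\mM_x$ and the operator-algebra identification $\mL_{\mM,x}\simeq(\mM_x,\mM_x)$. The right $\mB_x$-module structure and the $\mB_x$-valued scalar product are exactly those produced by the base-change construction of \secref{preli} (case (2), for the fibre epimorphism $\pi'_x:\mB\lra\mB_x$), so $\mM_x$ is a right Hilbert $\mB_x$-module by completion and only the left $\mA_x$-action needs checking. To see that $v(\psi\ot w):=(a\psi)\ot w$, $\pi_x(a)=v$, is well defined, I would write a generic element of $\ker\pi_x=\ovl{C_x(X)\mA}$ as a limit of sums $\sum_i f_i a_i$, $f_i\in C_x(X)$, and use the $C(X)$-centrality $f\psi=\psi f$ of $\mM$ to get $(f_i a_i)\psi\ot w=(a_i\psi)\ot\pi'_x(f_i)w=0$, since $\pi'_x(f_i)=f_i(x)1=0$; boundedness of the left action passes this to the closure. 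Compatibility with the balancing $\psi b\ot w=\psi\ot\pi'_x(b)w$ is immediate, and $v$ acts by an adjointable operator with adjoint implemented by $v^*$ because $\langle a\psi,\psi'\rangle=\langle\psi,a^*\psi'\rangle$ survives the application of $\pi'_x$. This yields the bimodule structure and a $C(X)$-morphism $\mA_x\lra(\mM_x,\mM_x)$.

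For the identification, I would build the natural map operator-wise. Every $T\in(\mM,\mM)$ induces, by functoriality of the interior tensor product, an adjointable $T\ot 1\in(\mM_x,\mM_x)$, namely the unique bounded operator with $(T\ot 1)\circ\eta_x=\eta_x\circ T$, and $T\mapsto T\ot 1$ is a \sC homomorphism. Exactly as above, $(fT)\ot 1$ sends $\psi\ot w$ to $T\psi\ot\pi'_x(f)w=0$ for $f\in C_x(X)$, so this homomorphism kills $\ovl{C_x(X)(\mM,\mM)}=\ker\delta_x$ and factors through a \sC homomorphism $\bar\delta:\mL_{\mM,x}\lra(\mM_x,\mM_x)$ with $\bar\delta\circ\delta_x(T)=T\ot 1$. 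It then remains to show that $\bar\delta$ is bijective.

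Surjectivity I would read off the identity $\theta_{\psi,\psi'}\ot 1=\theta_{\eta_x(\psi),\eta_x(\psi')}$ on rank-one operators: since $\eta_x(\mM)$ is dense in $\mM_x$ (note $(\psi\ot 1)w=\psi\ot w$), these rank-ones densely span $K(\mM_x)$, whence $K(\mM_x)$ lies in the image; in the situations of interest, where $\mM$ is finitely generated projective (of the form $\wE^r$, or a module of sections of a vector bundle), one has $(\mM_x,\mM_x)=K(\mM_x)$ and surjectivity follows. The main obstacle is injectivity, equivalently the isometry $\|T\ot 1\|=\|\delta_x(T)\|$: the inequality $\le$ is automatic for a \sC homomorphism, but the reverse is genuinely delicate, because an operator may be locally nonzero at $x$ (so $\delta_x(T)\ne 0$) while annihilating the single fibre $\mM_x$. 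Reducing to $T\ge 0$, I would bound $\|T\ot 1\|$ below by $\sup_\psi\|\pi'_x\langle\psi,T\psi\rangle\|/\|\pi'_x\langle\psi,\psi\rangle\|$ and seek to match it with $\|\delta_x(T)\|$; this matching requires the fibre norm at $x$ to be realized by the action on $\mM_x$ itself, which holds precisely when $(\mM,\mM)$ is a continuous bundle over $X$ — again guaranteed by finite generation, where $(\mM,\mM)$ is a full corner of some $M_n(\mB)$ and the fibre functor is compatible with the passage to $\mM_x$. I would therefore isolate finite generation (continuity of the field of operator algebras) as the hypothesis that makes both halves of the isomorphism go through, rather than attempt the identity for arbitrary Hilbert modules.
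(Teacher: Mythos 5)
Your argument follows the paper's own proof quite closely on the first half: the paper also dismisses the right module structure and the inner product as ``general properties of internal tensor products'' and concentrates on the left action, the only difference being that it factors $a-a'=fa''$ with $f\in C_x(X)$ (implicitly via Cohen factorization in the closed ideal $\ker\pi_x$), whereas you approximate by finite sums $\sum_i f_ia_i$ and pass to the closure by boundedness of the left action; both are fine. For the second half the paper simply defines $\beta_x$ by $[\beta_x\circ\delta_x(t)][\eta_x(\psi)]:=\eta_x(t\psi)$ --- exactly your $T\mapsto T\otimes 1$ --- and explicitly leaves the verification that it is an isomorphism to the reader. Your scepticism here is well placed: with no finiteness or continuity hypothesis on $\mM$ the isomorphism claim is in fact false. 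Take $X=[0,1]$, $\mA=\mB=C(X)$ and $\mM=C_0([0,1))$ with the multiplication actions; then $\langle\psi,\psi\rangle(1)=0$ for every $\psi$, so $\mM_1=0$ and $(\mM_1,\mM_1)=0$, while $(\mM,\mM)=C_b([0,1))$ and $\mL_{\mM,1}=C_b([0,1))/C_0([0,1))\neq 0$, so $\beta_1$ cannot be injective. In every application the paper actually makes, $\mM$ is finitely generated projective (sections of a vector bundle, or $\wE\otimes_{C(X)}\mZ$), in which case $(\mM,\mM)=K(\mM)$ is a corner $pM_n(\mB)p$, the fibre at $x$ is $p_xM_n(\mB_x)p_x=K(\mM_x)=(\mM_x,\mM_x)$, and both surjectivity and injectivity of $\beta_x$ follow as you indicate; so isolating finite generation is not an overcautious restriction but the hypothesis that makes the stated isomorphism true.
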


\begin{proof}
The fact that the right $\mB_x$-module action and the scalar product are well-defined follows by general properties of internal tensor products of Hilbert bimodules. Thus, it remains to verify only that the left $\mA_x$-module action is well-defined. To this end, let $a,a' \in \mA_x$ such that $v =$ $\pi_x(a) =$ $\pi_x (a')$. Then there are $f \in C_x(X)$, $a'' \in \mA$ such that $fa'' = a - a'$. In this way, we find $(a \psi) \otimes w =$ $[(a' + f a'') \psi] \otimes w =$ $(a' \psi) \otimes w +$ $f(x) (a'' \psi) \otimes w =$ $(a' \psi) \otimes w$. This implies that our definition of left $\mA_x$-module action does not depend on the choice of $a \in \pi_x^{-1} (v)$.
Finally, if $\delta_x$, $\eta_x$ are as by (\ref{def_fmm},\ref{def_emm}), then an isomorphism $\beta_x : \mL_{\mM,x} \to (\mM_x,\mM_x)$ is defined by 
\[
[\beta_x \circ \delta_x (t)] \ [\eta_x (\psi)]
\ := \
\eta_x (t \psi)
\ ,
\]
$t \in ( \mM , \mM )$, $\psi \in \mM$ (we leave to the reader the task to verify that $\beta_x$ is actually an isomorphism).
\end{proof}

As for $C(X)$-algebras, we endow the disjoint union $\wa \mM :=$ $\dot \cup_x \mM_x$ with the natural projection $P : \wa \mM \to X$, and the topology generated by the base
\begin{equation}
\label{def_top}
T_{U,\eps,\psi} :=
\left\{   
\xi \in P^{-1}(U) : \left\| \xi - \eta_{P(\xi)}(\psi)  \right\| < \eps
\right\}
\ , 
\end{equation}
where $U \subseteq X$ is open, $\psi \in \mM$, and $\eps > 0$. For the notion of {\em Banach bundle}, see \cite{Dup74,HK79} (anyway, it is analogous to the one of \sC bundle).

\begin{lem}
The map $P : \wa \mM \to X$ defines a full Banach bundle, and $\mM$ coincides with the set $S_X (\wa \mM)$ of sections of $\wa \mM$.
\end{lem}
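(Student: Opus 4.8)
The plan is to prove that $P : \wa \mM \to X$ is a full Banach bundle and that the canonical map $\mM \to S_X(\wa \mM)$ is an isomorphism, by reducing everything to the already-established $C(X)$-algebra machinery of the previous \lemref{lem_mx}. First I would verify the defining axioms of a Banach bundle for the topology generated by the base $T_{U,\eps,\psi}$ in (\ref{def_top}). The continuity of the norm $\xi \mapsto \|\xi\|$ follows from the fact that on each set $T_{U,\eps,\psi}$ the norm is controlled by $\|\eta_{P(\xi)}(\psi)\|$ together with the triangle inequality, and the scalar-product formula in \lemref{lem_mx} shows $x \mapsto \|\eta_x(\psi)\|^2 = \|\pi_x(\langle \psi,\psi\rangle)\|$, which is continuous (indeed upper semicontinuous in general, but continuous in the continuous-bundle case) because $\langle \psi,\psi\rangle \in \mB$ and the fibre epimorphisms $\pi_x'$ of the $C(X)$-algebra $\mB$ have continuous fibrewise norm. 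Continuity of addition, scalar multiplication, and the involution-free module operations is checked on the base sets $T_{U,\eps,\psi}$ directly, using that each $\mM_x$ is a Hilbert $\mA_x$-$\mB_x$-bimodule and that the maps $\eta_x$ are fibrewise linear; these are routine estimates of the same flavour as in the \sC-bundle case treated in \cite{HK79,Dup74}.

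Next I would establish \emph{fullness}: for every $\xi \in \mM_x$ there must be a global section $\psi \in \mM$ with $\eta_x(\psi) = \xi$. This is immediate from the surjectivity of $\eta_x$ already recorded just after (\ref{def_emm}), since every $\xi \in \mM_x$ is of the form $\eta_x(\psi)$ for some $\psi \in \mM$, and by construction each $\psi$ yields the section $x \mapsto \eta_x(\psi)$ of $\wa \mM$. Fullness then gives, by the standard Banach-bundle sectioning theory, that the base sets $T_{U,\eps,\psi}$ really do generate a topology making $P$ open onto its image and the fibres into the prescribed Banach spaces.

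The heart of the statement is the identification $\mM = S_X(\wa \mM)$. The inclusion $\mM \hookrightarrow S_X(\wa \mM)$ is given by $\psi \mapsto (x \mapsto \eta_x(\psi))$; it is isometric because $\|\psi\| = \sup_x \|\eta_x(\psi)\| = \sup_x \|\pi_x(\langle\psi,\psi\rangle)\|^{1/2} = \|\langle\psi,\psi\rangle\|^{1/2}$, using that $\mB$, being a $C(X)$-algebra, has norm recovered as the supremum of its fibre norms. For surjectivity I would argue that the image is a $C(X)$-submodule of $S_X(\wa \mM)$ that is closed (by the isometry) and that separates points of every fibre and locally approximates every section, then invoke a Stone–Weierstrass / partition-of-unity argument exactly as in the correspondence between $C(X)$-algebras and \sC bundles described in \secref{preli}. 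Concretely, given a section $s \in S_X(\wa \mM)$ and $\eps > 0$, each point $x$ has a neighbourhood $U$ and a $\psi \in \mM$ with $s$ within $\eps$ of $x \mapsto \eta_x(\psi)$ on $U$ (this is just membership of $s$ in some $T_{U,\eps,\psi}$), and a finite partition of unity subordinate to a cover by such neighbourhoods glues these local $\psi$'s into a global element of $\mM$ approximating $s$ uniformly; completeness of $\mM$ finishes the job.

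The main obstacle I expect is the surjectivity half of $\mM = S_X(\wa \mM)$, specifically justifying the partition-of-unity gluing at the level of the Banach bimodule rather than an algebra. Unlike the \sC-algebra case, $\mM$ carries no multiplication, so one must be careful that multiplying a local section $\psi \in \mM$ by a scalar function $f \in C(X)$ (via the central $C(X)$-action $\psi f = f\psi$) stays inside $\mM$ and behaves correctly fibrewise, i.e.\ $\eta_x(f\psi) = f(x)\eta_x(\psi)$; this is exactly where the $C(X)$-Hilbert bimodule hypothesis and the scalar-product formula of \lemref{lem_mx} are needed. Once the gluing via $C(X)$ is seen to respect the fibre maps $\eta_x$, the argument parallels the standard one and the statement follows; I would therefore devote the most care to this compatibility and to confirming that the sup over fibres genuinely computes the $\mM$-norm.
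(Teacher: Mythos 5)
Your argument is correct, but it takes a genuinely different and much more labour-intensive route than the paper. The paper's proof is a one-step reduction: it invokes the representation theorem \cite[Theorem 6.2]{HK79}, which says that a Banach $C(X)$-module is the section module of a full Banach bundle with the prescribed fibres as soon as it is \emph{locally convex}, and then only has to verify the inequality $\| f \psi_1 + (1-f) \psi_2 \| \leq 1$ by expanding the $\mB$-valued inner product. You instead rebuild the content of that theorem by hand: you check the bundle axioms for the base $T_{U,\eps,\psi}$, get fullness from the surjectivity of $\eta_x$, and prove $\mM = S_X(\wa \mM)$ via the isometry $\| \psi \| = \sup_x \| \eta_x (\psi) \|$ together with a partition-of-unity approximation and completeness. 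Note that this isometry, which you derive from $\| b \| = \sup_x \| \pi'_x (b) \|$ in the $C(X)$-algebra $\mB$, is precisely the Hilbert-module substitute for the local convexity hypothesis the paper checks, so the two proofs rest on the same underlying fact and yours in effect reproves the special case of \cite[Theorem 6.2]{HK79} that is needed; what your route buys is self-containedness, what the paper's buys is brevity (and uniqueness of the bundle, which comes for free from the cited theorem). Two small points to tighten: (i) the fibrewise norm $x \mapsto \| \eta_x(\psi) \| = \| \pi'_x ( \left \langle \psi , \psi \right \rangle ) \|^{1/2}$ is in general only upper semicontinuous, since $\mB$ is not assumed to be a continuous bundle; this is harmless for the notion of Banach bundle used here, but you should assert upper semicontinuity rather than continuity; (ii) in the approximation step, make explicit that for a continuous section $s$ and a point $x$ one first chooses $\psi \in \mM$ with $\eta_x(\psi) = s(x)$ (surjectivity of $\eta_x$) and then uses continuity of $s$ against the basic open set $T_{X,\eps,\psi}$ to produce the neighbourhood on which $s$ is $\eps$-close to $y \mapsto \eta_y(\psi)$; after that the gluing and the Cauchy argument go through as you describe.
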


\begin{proof}
By \cite[Theorem 6.2]{HK79}, to prove the Lemma it suffices to verify that $\mM$ is a locally convex Banach $C(X)$-bimodule, i.e. that for every $f \in C(X)$, $0 \leq f \leq 1$, $\psi_1 , \psi_2 \in \mM$, $\left\| \psi_1 \right\|$, $\left\| \psi_2 \right\| \leq 1$, it turns out $\left\| f \psi_1 + (1-f) \psi_2 \right\| \leq 1$. This can be easily done by using the $\mB$-valued scalar product, and estimating
\begin{equation}
\label{eq_ineq}
\left \langle
f \psi_1 + (1-f) \psi_2
\ , \
f \psi_1 + (1-f) \psi_2
\right \rangle
\ \leq \
f^2 + (1-f)^2
\leq 
1
\end{equation}
(note that $\left \langle f \psi_1 , (1-f) \psi_2 \right \rangle = 0$; for the last inequality, we used the relations $f^2 \leq f$, $(1-f)^2 \leq 1-f$). Taking the norms of the terms of (\ref{eq_ineq}), we conclude that $\mM$ is locally convex.
\end{proof}

The bundle $\wa \mM$ may be endowed with further structure: in fact, the $\mA$-$\mB$-bimodule structure and the $\mB$-valued scalar product of $\mM$ induce continuous maps
\[
\wa \mA \times_X \wa \mM \to \wa \mM
\ \ , \ \
\wa \mM \times_X \wa \mB \to \wa \mM
\ \ , \ \
\wa \mM \times_X \wa \mM \to \wa \mB
\ .
\]
The correspondence $\mM \mapsto \wa \mM$ established in the previous Lemma has a functorial nature. If $T \in ( \mM , \mM' )$, then there is an associated bundle morphism $\wa T : \wa \mM \to {\wa \mM}'$, determined by the property $\wa T \circ \eta_x (\psi) =$ $\eta'_x \circ T(\psi)$, where $\psi \in \mM$, and $\eta_x$, $\eta'_x$ denote the evaluations of $\mM$, $\mM'$ on $x \in X$.

\begin{ex}
\label{ex_ab_hb}
{\it
Let $q : \Omega \to X$ be a compact bundle (i.e., $C(\Omega)$ is a unital $C(X)$-algebra), and $\mE \to \Omega$ a vector bundle. For every $x \in X$, we consider the vector bundle obtained as the restriction $\mE |_{\Omega_x} \to \Omega_x$, $\Omega_x :=$ $q^{-1}(x)$. We denote the set of sections of $\mE$ by $\mM$; clearly, $\mM$ is a $C(X)$-Hilbert $C(\Omega)$-bimodule. 
A standard argument (the Tietze extension theorem for vector bundles, see \cite[1.6.3]{Ati}) allows one to conclude that for every $x \in X$ there is a natural isomorphism
\[
V : \mM_x := \mM \otimes_{C(\Omega)} C(\Omega_x) \to S_{\Omega_x} (\mE |_{\Omega_x})
\ \ , \ \
V(\psi \otimes z) := \psi |_{\Omega_x} z
\ ,
\]
where $\psi : \Omega \to \mE$ belongs to $\mM$, $z \in C(\Omega_x)$, and $S_{\Omega_x} (\mE |_{\Omega_x})$ is the Hilbert $C(\Omega_x)$-bimodule of sections of $\mE |_{\Omega_x}$. 
Thus, the bundle $\wa \mM \to X$ has fibres $S_{\Omega_x} (\mE |_{\Omega_x})$, $x \in X$.
}
\end{ex}

\begin{ex}
\label{ex_mf}
{\it
Let $\mA$ be a \sC algebra, $X$ a compact Hausdorff space, and $\mE \to X$ a vector $\mA$-bundle in the sense of \cite{MF80}. Then the set $\mM$ of sections of $\mE$ is a $C(X)$-Hilbert $C(X)$-$(C(X) \otimes \mA)$-bimodule, and $\wa \mM = \mE$.
}
\end{ex}

\begin{defn}
\label{def_fabm}
Let $p : \mG \to X$ denote a group bundle with gauge actions $\alpha : \mG \times_X \wa \mA \to$ $\wa \mA$, $\beta : \mG \times_X \wa \mB \to \wa \mB$, and $\mM$ a $C(X)$-Hilbert $\mA$-$\mB$-bimodule. A {\bf gauge action} of $\mG$ on $\mM$ is given by a family $\left\{ U^x : G_x \times \mM_x \to \mM_x \right\}_x$ of actions by isometric linear operators making each $\mM_x$ a $G_x$-Hilbert $\mA_x$-$\mB_x$-bimodule, such that the map
\[
U : \mG \times_X \wa \mM \to \wa \mM
\ \ , \ \ 
U(y,\xi) := U^x_y \xi
\ ,
\] 
is continuous. In such a case, we say that $\mM$ is a $\mG${\bf -Hilbert} $\mA$-$\mB$-{\bf bimodule}.
\end{defn}

In explicit terms, a $\mG$-Hilbert $\mA$-$\mB$-bimodule $\mM$ is characterized by the relations
\begin{equation}
\label{eq_fabm}
\left\{
\begin{array}{ll}
\left \langle U^x_y(\xi) , U^x_y(\xi') \right \rangle_x
=
\beta^x_y (  \left \langle \xi , \xi' \right \rangle_x)
\\
U^x_y \cdot \wa \phi (v) \ = \ \wa \phi \circ \alpha^x_y (v) \cdot U^x_y
\ ,
\end{array}
\right.
\end{equation}
$y \in \mG$, $x := p(y)$, $\xi , \xi' \in \wa \mM$, $v \in \wa \mA$. If $\mM$, $\mM'$ are $\mG$-Hilbert $\mA$-$\mB$-bimodules, we define the set of $\mG$-equivariant operators
\[
( \mM , \mM' )_\mG
:=
\left\{
T \in ( \mM , \mM' ) : \
\wa T \circ U ( y , \xi ) = U ( y , \wa T \xi ) 
\ , \ 
y \in \mG , \xi \in \wa \mM 
\right\}
\ .
\]
The proof of the following result is analogous to \cite[Proposition 3.3]{Vas02g}, thus it is omitted.
\begin{prop}
Let $\mM$ be a $C(X)$-Hilbert $\mA$-bimodule with a gauge action $U : \mG \times_X \wa \mM \to$ $\wa \mM$. Then for every section group $G \subseteq S_X(\mG)$ there is an action $U^G : G \times \mM \to \mM$, such that
\begin{equation}
\label{eq_fact}
\eta_x \circ U^G_g = U^x_{g(x)} \circ \eta_x \ , \ x \in X \ ,
\end{equation}
where $\eta_x$ is defined by (\ref{def_emm}). On the converse, every action $U^G : G \times \mM \to \mM$ such that there is a set $\left\{  U^x : G_x \times \mM_x \to \mM_x  \right\}_x$ fulfilling (\ref{eq_fact}) defines a gauge action $U$ that does not depend on $G$.
\end{prop}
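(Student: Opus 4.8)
The plan is to prove the correspondence between gauge actions of the group bundle $\mG$ on $\mM$ and strongly continuous actions of section groups $G \subseteq S_X(\mG)$, in both directions, by exploiting the fibrewise nature of the gauge action together with the reconstruction of $\mM$ as the section module of the Banach bundle $\wa \mM$. The key mechanism is that $\mM$ is recovered as $S_X(\wa \mM)$ (by the earlier Lemma), so an operator on $\mM$ is continuous precisely when its fibrewise components assemble into a continuous bundle map; this will let me pass freely between the global action $U^G$ and the fibred data $\{U^x\}$.

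First I would construct $U^G$ from a gauge action $U$. Given $g \in G$, I define $U^G_g$ by prescribing its value fibrewise through \eqref{eq_fact}: for $\psi \in \mM$ and $x \in X$, I set $\eta_x(U^G_g \psi) := U^x_{g(x)}(\eta_x \psi)$. I must check that this determines a genuine element $U^G_g \psi \in \mM$, i.e. that the assignment $x \mapsto U^x_{g(x)}(\eta_x \psi)$ is a continuous section of $\wa \mM$. Continuity follows by composing the continuous section $x \mapsto \eta_x(\psi)$ of $\wa \mM$ with the continuous bundle map $U : \mG \times_X \wa \mM \to \wa \mM$ of \defnref{def_fabm}, precomposed with the continuous section $x \mapsto g(x)$ of $\mG$ (here I use that $g$ is a section, so $p \circ g = id_X$ and the fibred product is correctly matched). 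Since $\mM = S_X(\wa \mM)$, this section is indeed $\eta_\bullet$ of a unique element of $\mM$, so $U^G_g$ is well-defined. That $U^G_g$ is an isometric linear $\mB$-module map compatible with the left $\mA$-action, and that $g \mapsto U^G_g$ is a homomorphism satisfying the equivariance relations, all follow fibrewise from \eqref{eq_fabm} together with the group-action property of each $U^x$; strong continuity of $U^G$ in $g$ is inherited from joint continuity of $U$ and continuity of evaluation. I would note that $U^G$ is $C(X)$-linear automatically because each $U^x$ is, matching the statement's assertion that $U^G$ maps into the $C(X)$-automorphisms.

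For the converse, suppose an action $U^G : G \times \mM \to \mM$ is given together with a family $\{U^x : G_x \times \mM_x \to \mM_x\}_x$ satisfying \eqref{eq_fact}. I would reconstruct the bundle map $U : \mG \times_X \wa \mM \to \wa \mM$ by $U(y,\xi) := U^x_y \xi$, $x := p(y)$, and verify it is continuous and independent of the choice of section group. The point exploited in \lemref{lem_og}-style arguments, and here, is the defining property of a section group: for every $y \in \mG$ there exists $g \in G$ with $y = g(p(y))$, so every fibrewise datum $U^x_y$ is realized as $U^x_{g(x)}$ for some global $g$. This lets me express $U$ locally in terms of $U^G$, and transport the continuity of $U^G$ (via the base \eqref{def_top} of the topology on $\wa \mM$) to continuity of $U$. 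The independence from $G$ then follows because any two section groups witness the same fibrewise operators on the shared fibres, so the resulting bundle map is intrinsic.

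The main obstacle I anticipate is the continuity bookkeeping in the converse direction: given only a global strongly continuous $U^G$ and a bare family $\{U^x\}$ tied together pointwise by \eqref{eq_fact}, one must produce \emph{joint} continuity of $(y,\xi) \mapsto U^x_y \xi$ on the fibred product $\mG \times_X \wa \mM$, not merely separate continuity in $y$ and $\xi$. The resolution is to localize using the neighbourhood base \eqref{def_top}: fix $\xi_0 = \eta_{x_0}(\psi)$ and estimate $\|U^x_y \eta_x(\psi') - U^{x_0}_{y_0}\eta_{x_0}(\psi)\|$ by inserting $U^x_{g(x)}\eta_x(\psi) = \eta_x(U^G_g \psi)$ for a section $g$ through $y_0$, controlling one term by strong continuity of $U^G$ and the isometry property and the other by continuity of the section $\eta_\bullet(U^G_g\psi)$ of $\wa \mM$. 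This is precisely the argument carried out in \cite[Proposition 3.3]{Vas02g} for $C(X)$-algebras, and since the Hilbert-bimodule setting differs only in replacing the product by the isometric linear action while keeping the same Banach-bundle topology, the proof transfers verbatim; this is what licenses omitting the detailed verification.
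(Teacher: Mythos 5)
Your proposal is correct and follows essentially the same route as the paper, which omits the proof precisely because it is the argument of \cite[Proposition 3.3]{Vas02g} transported to the Banach-bundle picture of $\wa \mM$: you use $\mM = S_X(\wa \mM)$ to define $U^G_g$ fibrewise and check continuity of the resulting section, and in the converse direction you recover joint continuity of $U$ from strong continuity of $U^G$ via the neighbourhood base (\ref{def_top}) and the defining property of a section group. Your identification of the joint-continuity step as the only nontrivial point, and its resolution by inserting $\eta_x(U^G_g\psi)$ for a section $g$ through $y_0$, is exactly the content of the cited argument.
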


In particular, if $\mG \simeq X \times G_0$ is a trivial bundle, then we may take $G = G_0$ and obtain a $G$-action on $\mM$ in the usual sense (\cite[VIII.20]{Bla}).

\begin{rem}
\label{rem_tac}
{\it
In the present paper we shall make use of actions $U : \mG \times_X \wa \mM \to \wa \mM$ coupled with trivial actions on the coefficient algebras, in such a way that
\begin{equation}
\label{eq_tr_fa}
\left\{
\begin{array}{ll}
\left \langle U^x_y(\xi) , U^x_y(\xi') \right \rangle_x
=
\left \langle \xi , \xi' \right \rangle_x 
\\ 
\left[ U^x_y  ,  \wa \phi(v) \right]  = 0
\ ,
\end{array}
\right.
\end{equation}
$y \in \mG$, $x := p(y)$, $\xi , \xi \in \wa \mM$, $v \in \wa \mA$. In this case, $U^x_y \in U \mM_x$, $x \in X$, and we may express (\ref{eq_tr_fa}(2)) in the following, more concise way: $\phi(a) \in ( \mM , \mM  )_\mG$, $a \in \mA$.
}
\end{rem}

The following Lemma generalizes (\ref{24}):
\begin{lem}
\label{lem_fa_om}
Let $\mM$ be a $\mG$-Hilbert $\mA$-bimodule with trivial gauge $\mG$-actions on $\mA$. Then the Cuntz-Pimsner algebra $\com$ is a $C(X)$-algebra with associated \sC bundle $\wa \mO_\mM$, and the $\mG$-action on $\mM$ extends to a gauge action
$\mG \times_X \wa \mO_\mM \to \wa \mO_\mM$.
\end{lem}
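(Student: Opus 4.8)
The plan is to upgrade the fibrewise construction of the Cuntz--Pimsner algebra to a bundle construction, and then transport the gauge action on $\mM$ along the universal property fibre by fibre. First I would invoke the preceding Lemma to realize $\mM$ as the section module of its associated full Banach bundle $\wa \mM \to X$, with fibres the Hilbert $\mA_x$-bimodules $\mM_x := \mM \otimes_\mB \mB_x$. Each fibre $\mM_x$ generates a fibre Cuntz--Pimsner algebra $\mO_{\mM_x}$, and I would define $\wa \mO_\mM$ as the disjoint union $\dot\cup_x \mO_{\mM_x}$, equipped with the topology inherited from the inclusions $(\mM_x^r, \mM_x^s) \hookrightarrow \mO_{\mM_x}$; the point is that the spaces of sections $(\mM^r, \mM^s)$ are $C(X)$-algebras (as noted in \S\ref{sec_fa_hb}) whose fibre epimorphisms $\delta_x$ recover the corresponding arrow spaces of $\mM_x$ via the isomorphism $\mL_{\mM,x} \simeq (\mM_x, \mM_x)$ of Lemma \ref{lem_mx}. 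I would check that the algebraic operations (addition, product, adjoint) are continuous across fibres by verifying continuity on the dense set of words $\psi_L \psi_M^*$ built from a finite generating set $\{\psi_l\}$ of $\mM$, so that $\wa \mO_\mM$ is a genuine \sC bundle and $\com := S_X(\wa \mO_\mM)$ is a $C(X)$-algebra with fibres $\mO_{\mM_x}$.

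Next I would produce the gauge action. Since $\mM$ is a $\mG$-Hilbert $\mA$-bimodule with \emph{trivial} gauge action on $\mA$, each $U^x_y : \mM_x \to \mM_x$, $y \in G_x$, is an isometry commuting with the left $\mA_x$-module structure and preserving the $\mB_x$-valued scalar product in the sense of (\ref{eq_tr_fa}); equivalently, $U^x_y \in U\mM_x$. By the universal property of the fibre Cuntz--Pimsner algebra $\mO_{\mM_x}$ (Pimsner, \cite{Pim97}, in the categorical form of \cite{DPZ97}), each such unitary module map extends uniquely to an automorphism $\wa{U^x_y} \in {\bf aut}\,\mO_{\mM_x}$ acting on words by $\wa{U^x_y}(\psi_{l_1}\cdots\psi_{l_r}) = (U^x_y\psi_{l_1})\cdots(U^x_y\psi_{l_r})$ and trivially on the image of $\mA_x$. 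Assembling these fibrewise extensions gives a map
\[
\wa U : \mG \times_X \wa \mO_\mM \to \wa \mO_\mM
\ , \
\wa U(y,\xi) := \wa{U^{p(y)}_y}(\xi)
\ ,
\]
and I would verify the gauge-action axioms: the cocycle/homomorphism property on each fibre follows from functoriality of the Pimsner construction, while continuity of $\wa U$ is the substantive point and must be established in the bundle topology.

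The main obstacle I expect is precisely this continuity of $\wa U$, i.e.\ checking that $\wa U$ fits the continuity requirement of a gauge action in the sense of (\ref{def_fa}). The strategy would be to reduce continuity on all of $\wa \mO_\mM$ to continuity on the total space of the generating bundle $\wa \mM$: since the topology of $\wa \mO_\mM$ is generated by sections coming from words $\psi_L\psi_M^*$, and since $\wa U$ acts on such a word by applying the \emph{given} continuous action $U : \mG \times_X \wa \mM \to \wa \mM$ to each letter and leaving adjoints and scalar products (valued in $\wa \mB$) fixed, continuity of $\wa U$ on these generators follows from continuity of $U$ together with the continuity of multiplication and adjunction on $\wa \mO_\mM$ established in the first step. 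A density argument (using that $\cup_{r,s}(\mM^r,\mM^s)$ is dense in $\com$, in analogy with Lemma \ref{lem_og}, and that the $\wa{U^x_y}$ are isometric hence uniformly continuous) then propagates continuity from the generators to all of $\wa \mO_\mM$. Because the whole construction is the evident fibred analogue of the passage from (\ref{def_fae}) to the gauge action (\ref{24}), the remaining verifications are routine and may be summarized rather than carried out in full.
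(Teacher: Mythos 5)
Your treatment of the gauge action itself (fibrewise extension of $U^x_y$ to $\mO_{\mM_x}$ by universality, then continuity of the assembled map deduced from continuity of $U$ on the generating words) is essentially the paper's argument. Where you diverge is in the first step, and the divergence introduces a genuine gap. You build $\wa \mO_\mM$ \emph{bottom-up} as $\dot\cup_x \mO_{\mM_x}$ and then \emph{define} $\com := S_X(\wa \mO_\mM)$. But the lemma is a statement about the already-existing Cuntz--Pimsner algebra of the bimodule $\mM$, so with your construction you still owe two nontrivial identifications: (i) that the fibre $(\com)_x = \com / (C_x(X)\com)$ of the genuine Pimsner algebra is isomorphic to the Pimsner algebra $\mO_{\mM_x}$ of the fibre bimodule $\mM_x$, and (ii) that the section algebra of your hand-built bundle recovers $\com$. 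Point (i) is exactly the delicate part: universality only gives a surjection $\mO_{\mM_x} \to C^*(\pi_x(\mM)) \subseteq (\com)_x$, and its injectivity is a gauge-invariant-uniqueness type statement, not something that follows from checking continuity of the operations on words $\psi_L \psi_M^*$. (The paper does assert the identification of fibres with $\mO_{\mM_x}$ later, in \S\ref{duality}, but only in the special free-module situation of Example \ref{ex_mez}, and it is not needed for this lemma.)

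The paper avoids all of this by going \emph{top-down}: $\com$ is generated by the spaces $K(\mrs)$, $r,s \in \bN$, and since $ft = tf$ for $t \in K(\mrs)$ and $f \in C(X)$, the algebra $C(X)$ lands in the centre of $\com$; hence $\com$ is a $C(X)$-algebra by definition, and $\wa \mO_\mM$ is simply its associated \sC bundle under the general equivalence of \cite{HK79} recalled in \S\ref{preli}, with fibres $(\com)_x$ --- whatever they are. No comparison with $\mO_{\mM_x}$ is required. If you want to keep your route, you must either prove the fibre identification (i) or, better, replace your first step by the centrality observation and let the $C(X)$-algebra machinery produce the bundle for you; your second step then goes through as written, with the fibre actions $G_x \to {\bf aut}(\com)_x$ obtained as in \cite[\S 3]{DPZ97}. (A minor slip: in this lemma the coefficient algebra is $\mA$, so the fibres are $\mM \otimes_\mA \mA_x$, not $\mM \otimes_\mB \mB_x$.)
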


\begin{proof}
The Cuntz-Pimsner algebra $\com$ may be described as the one generated by the spaces $K( \mrs )$, $r,s \in \bN$, as in \cite[\S 4]{DR89}, \cite[\S 3]{DPZ97}. Since $f t = tf$, $t \in K(\mrs)$, $f \in C(X)$, we conclude that $C(X)$ is contained in the centre of $\com$, i.e. $\com$ is a $C(X)$-algebra. We denote the fibres of $\com$ by $(\com)_x$, $x \in X$, and the so-obtained \sC bundle by $\wa \mO_\mM \to X$. 
By \cite[\S 3]{DPZ97}, it follows that for every $x \in X$ there is a strongly continuous action $G_x \to {\bf aut} (\com)_x$, $y \mapsto \wa y$. In this way, we obtain a map $\mG \times_X \wa \mO_\mM \to \wa \mO_\mM$, $(y,\xi) \mapsto$ $\wa y (\xi)$, whose continuity can be easily proved using continuity of the action $U : \mG \times_X \wa \mM \to \wa \mM$.
\end{proof}

\begin{ex}[Equivariant vector bundles, \cite{Ati,Seg68}]
\label{ex_evb}
{\it
Let $G$ be a compact group, and $\Omega$ a compact Hausdorff $G$-space. We denote the orbit space by $X := \Omega / G$, and consider the natural projection $q : \Omega \to X$.
%
%
%
If $\mE \to \Omega$ is a $G$-equivariant vector bundle, then the Hilbert $C(\Omega)$-bimodule $\mM$ of sections of $\mE$ is naturally endowed with a $G$-action. 
%
%
In \sC algebraic terms, we have a strongly continuous action $\alpha : G \to C(\Omega)$ with fixed-point algebra (isomorphic to) $C(X)$, so that $C(\Omega)$ is a $C(X)$-algebra with associated \sC bundle $\wa \mA_\Omega \to X$ and fibres $\mA_{\Omega,x} \simeq$ $C(\Omega_x)$, $\Omega_x := q^{-1} (x)$, $x \in X$. As mentioned in \S \ref{key_group_duals}, $\alpha$ may be regarded as a gauge action $\mG \times_X \wa \mA_\Omega \to \wa \mA_\Omega$, where $\mG :=$ $X \times G$. 
Note that each $g \in G$ acts as a homeomorphism on the restriction $\Omega_x$, in fact $q(g \omega) = q(\omega)$ for every $\omega \in \Omega$; this implies that the $G$-action on $\mE$ restricts to $G$-actions $G \times \mE |_{\Omega_x} \to \mE |_{\Omega_x}$, $x \in X$.
By Example \ref{ex_ab_hb}, we find $\mM_x =$ $S_{\Omega_x} ( \mE |_{\Omega_x} )$, $x \in X$. Thus, we conclude that $\mM$ is a $\mG$-Hilbert $C(\Omega)$-bimodule.
%
%
}
\end{ex}

\begin{ex}[Gauge-equivariant vector bundles, \cite{NT04}]
\label{ex_gevb}
{\it
Let $p : \mG \to X$ be a group bundle, and $q : \Omega \to X$ a compact bundle carrying an action $\mG \times_X \Omega \to \Omega$. In other terms, the \sC algebra $C(\Omega)$ is a $C(X)$-algebra endowed with a gauge action $\alpha : \mG \times_X \wa \mA_\Omega \to \wa \mA_\Omega$, where $\wa \mA_\Omega \to X$ is the \sC bundle associated with $C(\Omega)$.
Let $\pi : \mE \to \Omega$ be a vector bundle. As in the previous example, we have that the set $\mM$ of sections of $\mE$ is endowed with a structure of $C(X)$-Hilbert $C(\Omega)$-bimodule.
Note that $\mE$ is a bundle over $X$ with respect to the map $q \circ \pi$, with fibres $(q \circ \pi)^{-1}(x) =$ $\mE |_{\Omega_x}$, $x \in X$; thus, it makes sense to consider the fibred product $\mG \times_X \mE$. The vector bundle $\mE$ is said to be $\mG$-equivariant if there is an action by homeomorphisms
\[
\mG \times_X \mE \to \mE
\]
defined in such a way that each $\mE |_{\Omega_x} \to \Omega_x$, $x \in X$, is a $G_x$-equivariant vector bundle in the sense of the previous example. This implies that $\mM$ is a $\mG$-Hilbert $C(\Omega)$-bimodule. In the present paper we shall make use of the case in which $\Omega = X$ is endowed with the trivial $\mG$-action, as in \S \ref{key_group_duals}.
}
\end{ex}

A notion of Kasparov cycle in the setting of gauge-equivariant Hilbert bimodules can be introduced. Let $X$ be a compact Hausdorff space, $\mG \to X$ a group bundle, and $\mA$, $\mB$ unital, separable $\bZ_2$-graded $\mG$-$C(X)$-algebras. A {\em Kasparov cycle} is a pair $( \mM , F )$, where $\mM$ is a countably generated, $\bZ_2$-graded $\mG$-Hilbert $\mA$-$\mB$-bimodule with left $\mA$-action $\phi : \mA \to (\mM,\mM)$, and $F = F^* \in$ $( \mM , \mM )$ is an operator with degree one such that $[\phi(a),F]$, $[F^2 - 1 , \phi(a)] \in$ $K(\mM)$ for every $a \in \mA$; moreover, the following "quasi-$\mG$-equivariance" is required:
\[
[ \wa F , U^x_y ] \in K(\mM_x) \ \ , \ \ y \in \mG \ , \ x := p(y) \ .
\]
The notions of homotopy, equivalence and direct sum are exactly the same as in \cite{Kas88}. In this way, we can define the {\em gauge-equivariant} $KK$-{\em group} $KK^\mG_X ( \mA , \mB )$.
When $\mG = X \times G$ is trivial we obtain the usual Kasparov group $\mR KK^G ( X ; \mA , \mB )$ (\cite{Kas88}).
Further properties of the bifunctor $KK_X^\mG ( - , - )$ (in particular, details on the non-unital case) will be discussed in a future reference.

\begin{ex}
\label{ex_nt}
{\it
Let $\mG \to X$ be a compact Lie group bundle acting on Hilbert bundles $\mH^k \to X$, $k = 0,1$, and $F :=$ $\left\{ F^x : \mH^0_x \to \mH^1_x \right\}_x$ a continuous family of $\mG$-invariant Fredholm operators (see \cite[\S 4]{NT04}. Then the sets of sections $\wa \mH^k$, $k=0,1$, are $\mG$-Hilbert $C(X)$-bimodules, and $F$ may be regarded as an element of $( \wa \mH^0 , \wa \mH^1 )_\mG$. Let now $\mM := \wa \mH^0 \oplus \wa \mH^1$, and
\[
\widetilde F :=
\left(
\begin{array}{rl}
    0 & F^*
\\  F & 0
\end{array}
\right)
\ ;
\]
then, we obtain a pair $( \mM , \widetilde F )$, defining an element of $KK_X^\mG ( C(X) , C(X) )$. 
}
\end{ex}

Let $\mM , \mM' \in {\bf bmod}_X (\mA,\mB)$; we consider the space of right $\mB$-module operators that commute with the left $\mA$-action:
\begin{equation}
\label{def_bm}
( \mM , \mM' )_\mA
:=
\left\{
T \in ( \mM , \mM' )
\ : \
Ta = aT , a \in \mA
\right\}
\ .
\end{equation}
We now focalise our attention to the case $\mA = \mB$, and define $\mZ :=$ $\mA' \cap \mA$. The class ${\bf bmod}_X \mA :=$ ${\bf bmod}_X ( \mA , \mA )$ becomes a \sC category if equipped with the sets of arrows $(\mM , \mM' )$, $\mM$, $\mM' \in$ ${\bf bmod}_X \mA$.
Let us endow ${\bf bmod}_X \mA$ with the internal tensor product $\otimes_\mA$. It is well-known that at the level of arrows the tensor product $T \otimes_\mA T'$ does not make sense, unless $T'$ is the identity (\cite[13.5]{Bla}). In other terms, ${\bf bmod}_X \mA$ is a semitensor \sC category in the sense of \cite{DPZ97}. 
If we restrict the sets of arrows to $( \mM , \mM' )_\mA$, $\mM , \mM' \in {\bf bmod}_X \mA$, then we obtain a {\em tensor} \sC category ${\bf bmod}_{X,\mA} \mA$, having the same objects as ${\bf bmod}_X \mA$, and arrows $( \mM , \mM' )_\mA$. Note that every $( \mM , \mM' )_\mA$ is a Banach $\mZ$-bimodule in the natural way.

Let $\mG \to X$ denote a group bundle. We denote the semitensor \sC category with objects $\mG$-Hilbert $\mA$-bimodules by ${\bf bmod}_X^\mG \mA$, with arrows the spaces $( \mM , \mM' )_\mG$ of equivariant operators. 
Let $\mM$ be a $\mG$-Hilbert $\mA$-bimodule. The full semitensor \sC subcategory of ${\bf bmod}_X \mA$ with objects the internal tensor powers $\mM^r :=$ $\mM \otimes_\mA \ldots \otimes_\mA \mM$, $r \in \bN$, is denoted by $\mM^\otimes$; in particular, we define $\mM^0 := \mA$. Moreover, we denote the semitensor \sC category with objects $\mM^r$, $r \in \bN$, by $\mM_\mG^\otimes$, with arrows $(\mrs)_\mG$, $r,s \in \bN$; note that $( \mA , \mA )_\mG$ is the fixed-point algebra with respect to the given gauge action $\alpha : \mG \times_X \wa \mA \to \wa \mA$.

\begin{ex}
\label{ex_mez}
{\it
The following construction will be used in the sequel, in the context of Hilbert extensions.
Let $X$ be a compact Hausdorff space, $\mE \to X$ a vector bundle, and $\mZ$ an Abelian $C(X)$-algebra with identity $1$. We denote the bundle defined by the spectrum $X'$ of $\mZ$ by $q : X' \to X$.
Moreover, we consider a $C(X)$-Hilbert $\mZ$-bimodule $\mM$ with left action $\phi : \mZ \to$ $( \mM,\mM )$, and assume that there is an isomorphism $\mM \simeq$ $\wE \otimes_{C(X)} \mZ$ of right Hilbert $\mZ$-modules, where $\mE \to X$ is a rank $d$ vector bundle.
Now, $\mM$ is isomorphic (as a right Hilbert $\mZ$-module) to the module of sections of the pullback bundle $\mE^q :=$ $\mE \times_X X' \to X'$; note that elements of $\mE^q$ are of the type $(\xi,x')$, $\xi \in \mE_x$, $x \in X'_x$, $x \in X$. This allows one to regard elements of $\mM$ as injective continuous maps of the type $\psi : X' \to \mE$ such that $\psi (x') \in$ $\mE_{q(x')}$ (the associated section of $\mE^q$ is given by the map $\psi^q (x') :=$ $( \psi(x') , x' )$, $x' \in X'$).
On the same line of Example \ref{ex_gevb}, for every group bundle $p : \mG \to X$, $\mG \subseteq \mcUE$, we consider the gauge action
\begin{equation}
\label{eq_ga1}
\mG \times_X \mE^q \to \mE^q
\ \ , \ \
( y , (\xi,x') ) \mapsto ( y(\xi) , x'  )
\ ,
\end{equation}
obtained by extending (\ref{def_fae}) to $\mE^q$. Now, every $\mM_x$, $x \in X$, is isomorphic to the module of sections of $\mE^q |_{X'_x} \to X'_x$ (note that $\mE^q |_{X'_x} \simeq$ $\mE_x \times X'_x \simeq$ $\bC^d \times X'_x$); thus, elements of $\mM_x$ are continuous maps from $X'_x$ to $\mE_x$, and $\mM_x$ is isomorphic as a right Hilbert $\mZ_x$-module to the free module $\mE_x \otimes \mZ_x \simeq$ $\bC^d \otimes \mZ_x$. We obtain the gauge action
\begin{equation}
\label{eq_fam}
U : \mG \times_X \wa \mM \to \wa \mM
\ \ , \ \
U^x_y \psi (x') := y (\psi (x')) \ ,
\end{equation}
$y \in \mG$, $\psi \in \mM_{p(y)}$, $x' \in X'_{p(y)}$, and conclude that $\mM$ is a $\mG$-Hilbert $\mZ$-bimodule. 
Now, in general $\mM_\mG^\otimes$ is a semitensor \sC category; anyway, if
\begin{equation}
\label{eq_mg_t}
( \mrs )_\mG
\ \subseteq \ 
( \mrs )_\mZ
\end{equation} 
(see (\ref{def_bm})), then $\mM_\mG^\otimes$ is a tensor \sC category. It is clear that if the left and right $\mZ$-actions coincide, then (\ref{eq_mg_t}) is fulfilled. Anyway, (\ref{eq_mg_t}) holds in more general cases (see \cite[\S 4]{LV07}, and Example \ref{ex_cg}).
}
\end{ex}

\section{Dual actions vs. gauge actions.}
\label{sec_dual_action}

Let $\rho$ be a unital endomorphism of a \sC algebra $\mA$,
\begin{equation}
\label{def_zro}
\zro := \left\{ 
f \in \mA \cap \mA' : \rho (f) = f
\right\} \ ,
\end{equation}
\noindent $\mE \to X^\rho$ a vector bundle, $\mG \subseteq \mcSUE$ a compact group bundle. A {\em dual $\mG$-action} on $(\mA,\rho)$ is a $\zro$-monomorphism $\mu : ( \cog , \sigma_\mG ) \hra (\mA , \rho )$ such that $\mu (\ers)_\mG = \mu ( \sgrs ) \subseteq (\rhors)$, $r,s \in \bN$ (in other terms, $\mu$ is a functor from $\wa \mG$ to $\wa \rho$). The {\em crossed product} of $\mA$ by $\mu$ is the universal $\zro$-algebra $\mc$ generated by $\mA$ and an isomorphic image of $\coe$ by means of a unital $C(X)$-monomorphism $j : \coe \hra \mc$; moreover, we require that $\mc$ fulfills the universal properties
\begin{equation}
\label{30}
\left\{
\begin{array}{ll}
\mu (t) = j (t) \ \ , \ \  t \in \cog
\\
\rho (a) \psi = \psi a
\ \ , \ \
a \in \mA \ , \ \psi \in j(\wE) \subset  j(\coe) \ .
\end{array}
\right.
\end{equation}
The same argument of \cite[Thm 3.11]{Vas05} shows existence and unicity of $\mc$. Let us now denote the \sC bundle associated with $\mc$ by $\wa \mB \to X$; then, \cite[Eq.3.17]{Vas05} implies that there is a gauge action
\begin{equation}
\label{def_ada}
\alpha : \mG \times_X \wa \mB \to \wa \mB
\ \ , \ \ 
\alpha ( y , v \cdot \wa j (\xi) ) := 
v \cdot \wa j \circ \wa y (\xi) 
\ \ ,
\end{equation}
\noindent $v \in \wa \mA$, $\xi \in \wa \mO_\mE$. If $\mG$ is endowed with an invariant $C(X)$-valued functional, then $\mA$ is the fixed-point algebra with respect to the action (\ref{def_ada}) (see remarks after \cite[Theorem 3.11]{Vas05}, \cite[Lemma 2.8, Theorem 3.2]{DR89A}). If $\left\{ \psi_l \right\}$ is a finite set of generators for $j(\wE) \subset \mc$, the endomorphism
\begin{equation}
\label{def_sigmab}
\sigma \in {\bf end} (\mc)
\ \ , \ \
\sigma (b) := \sum_l \psi_l b \psi_l^*  \ , \ b \in \mc \ ,
\end{equation}
\noindent is defined, in such a way that $\sigma |_\mA =$ $\rho$, $\sigma \circ j =$ $j \circ \sigma_\mE$. As an immediate consequence of (\ref{def_sigmab}) and the remarks after (\ref{def_ecoe}), we find
\begin{equation}
\label{eq_erssrs}
j(\ers) \subseteq (\srs) \ \ , \ \ r,s \in \bN \ \ .
\end{equation}

\begin{lem}
\label{lem_funct}
Let $\mG \subseteq \mcSUE$ be a compact group bundle with a dual action $\mu : \wa \mG \to \wa \rho$. If $\mG' \subseteq \mcSUE$ is a compact group bundle with $\mG \subseteq \mG'$, then a dual $\mG'$-action $\mu'$ is defined on $(\mA,\rho)$, and there is a $C(X)$-epimorphism $\eta : \mA \rtimes_{\mu'} \wa{\mG}' \to \mc$. The morphism $\eta$ is an $\mA$-module map, intertwines the inner endomorphisms induced by $\wE$, and preserves the intertwiners spaces $(\ers)$, $r,s \in \bN$.
\end{lem}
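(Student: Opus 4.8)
The plan is to realize the crossed product $\mc$ as a quotient of $\mA \rtimes_{\mu'} \wa{\mG}'$, exploiting the fact that enlarging the group bundle shrinks the fixed-point algebra. The starting observation is that $\mG \subseteq \mG'$ forces the \emph{reverse} inclusion $\mO_{\mG'} \subseteq \cog$: invariance under the larger bundle $\mG'$ is a stronger constraint, so by (\ref{def_ersg}) one has $(\ers)_{\mG'} \subseteq (\ers)_\mG$ for all $r,s$, and since by \lemref{lem_og} (applied to the invariant functionals of $\mG$ and of $\mG'$) the union $\cup_{r,s}(\ers)_{\mG'}$ is dense in $\mO_{\mG'}$ while $\cup_{r,s}(\ers)_\mG$ is dense in $\cog$, passing to closures yields $\mO_{\mG'} \subseteq \cog$.

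First I would define $\mu'$ as the restriction $\mu|_{\mO_{\mG'}}$. The canonical endomorphism $\sigma_\mE$ of (\ref{def_ecoe}) restricts to $\sigma_{\mG'}$ on $\mO_{\mG'}$ and to $\sigma_\mG$ on $\cog$, so on $\mO_{\mG'} \subseteq \cog$ these two restrictions coincide; hence $\mu'$ intertwines $\sigma_{\mG'}$ and $\rho$ because $\mu$ intertwines $\sigma_\mG$ and $\rho$. As $\mu$ is a $\zro$-monomorphism, so is $\mu'$, and $\mu'((\ers)_{\mG'}) = \mu((\ers)_{\mG'}) \subseteq \mu((\ers)_\mG) \subseteq (\rhors)$. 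This exhibits $\mu'$ as a dual $\mG'$-action, i.e. a functor $\wa{\mG}' \to \wa\rho$.

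The epimorphism $\eta$ I would obtain from the universal property (\ref{30}) of $\mA \rtimes_{\mu'} \wa{\mG}'$. Both crossed products are built from the same Cuntz-Pimsner algebra $\coe$ of the same bundle $\mE$; only the relation gluing $\coe$ to $\mA$ changes. I would verify that $\mc$, together with $\mA$ and the monomorphism $j : \coe \hra \mc$, satisfies the defining relations (\ref{30}) with $\mu$ replaced by $\mu'$: the support relation $\rho(a)\psi = \psi a$, $\psi \in j(\wE)$, is already part of (\ref{30}) for $\mc$, while for $t \in \mO_{\mG'} \subseteq \cog$ one has $j(t) = \mu(t) = \mu'(t)$. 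Universality then yields a unique $C(X)$-morphism $\eta : \mA \rtimes_{\mu'} \wa{\mG}' \to \mc$ with $\eta|_\mA = id$ and $\eta \circ j' = j$, where $j'$ denotes the embedding of $\coe$ into $\mA \rtimes_{\mu'} \wa{\mG}'$. Since $\mc$ is generated by $\mA$ and $j(\coe) = \eta(j'(\coe))$, the image of $\eta$ contains a generating set, so $\eta$ is onto.

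The remaining assertions are then formal. As $\eta$ fixes $\mA$ and is multiplicative, it is an $\mA$-module map. Writing $\sigma'$, $\sigma$ for the inner endomorphisms (\ref{def_sigmab}) induced by the generators $j'(\psi_l)$, $j(\psi_l)$ of $j'(\wE)$, $j(\wE)$, the relations $\eta(j'(\psi_l)) = j(\psi_l)$ give $\eta \circ \sigma' = \sigma \circ \eta$, so $\eta$ intertwines the two inner endomorphisms. Finally, $\eta \circ j' = j$ with $j, j'$ monomorphisms shows that $\eta$ restricts to an isomorphism $j'(\coe) \to j(\coe)$, hence carries $j'(\ers)$ bijectively onto $j(\ers)$ for all $r,s$, which is the asserted preservation of the intertwiner spaces $(\ers)$. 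The one delicate point, and the main obstacle, is the verification that $\mc$ genuinely satisfies the universal relations defining $\mA \rtimes_{\mu'} \wa{\mG}'$; this hinges entirely on the reversed inclusion $\mO_{\mG'} \subseteq \cog$, which guarantees that the $\mu'$-relations form a sub-collection of the $\mu$-relations and so already hold in $\mc$.
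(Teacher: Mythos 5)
Your proposal is correct and follows essentially the same route as the paper: define $\mu'$ as the restriction of $\mu$ to $\mO_{\mG'}\subseteq\cog$, observe that $\mc$ already satisfies the universal relations (\ref{30}) for $\mu'$, and obtain $\eta$ from universality, with surjectivity and the remaining properties following from $\eta|_\mA=id$ and $\eta\circ j'=j$. (The inclusion $\mO_{\mG'}\subseteq\cog$ also follows immediately from the definition of the fixed-point algebras, without invoking \lemref{lem_og}.)
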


\begin{proof}
Since $(\ers)_{\mG'} \subseteq (\ers)_\mG$, $r,s \in \bN$, we can define the dual action
\[
\mu' : (\ers)_{\mG'} \to (\rhors) 
\ \ , \ \
\mu'(t) := \mu (t)
\ \ ,
\]
and consider the crossed product $\mA \rtimes_{\mu'} \wa{\mG}'$, endowed with the \sC morphisms 
\[
\mu' : \mO_{\mG'} \to \mA \rtimes_{\mu'} \wa{\mG}'
\ \ , \ \ 
j \ ' : \coe \to \mA \rtimes_{\mu'} \wa{\mG}' \ .
\]
Now, it is clear that if we consider the generic element $a \cdot j(t)$ of $\mc$, $a \in \mA$, $t \in \coe$, then we find
\[
\left\{
\begin{array}{ll}
\mu' (t) = \mu (t) = j (t) \ \ , \ \ t \in \mO_{\mG'} \subseteq \cog
\\
\rho (a) \psi = \psi a \ \ , \ \ \psi \in j(\wE)
\end{array}
\right.
\]
This means that $\mc$ fulfilles the universal properties (\ref{30}) for the dual action $\mu'$, thus the desired morphism $\eta$ is defined by the universal property. Note that $\eta$ is defined as the quotient of $\mA \rtimes_{\mu'} \wa{\mG}'$ with respect to the ideal generated by the relations
$j \ '(t) =$ $\mu(t)$, $t \in \cog$
(recall that $\mu (t) \in \mA \subset$ $\mA \rtimes_{\mu'} \wa{\mG}'$, thus the above equality makes sense).
\end{proof}

Let $\mA \subset \mB$ be an inclusion of unital \sC algebras with common identity $1$. A {\em Hilbert} $\mA$-{\em bimodule in} $\mB$ is given by a norm-closed subspace $\mM \subset \mB$, closed with respect to left and right multiplication by elements of $\mA$, and such that the set $\left\{ \psi^* \psi , \psi,\psi' \in \mM \right\}$ coincides with $\mA$ (see \cite{DPZ97}). $\mM$ is said to have {\em support the identity} if there is a finite set $\left\{ \psi_l \right\} \subset \mM$ such that $\sum_l \psi_l \psi_l^* = 1$. Note that in this case, every $\psi \in \mM$ is of the form $\psi = \sum_l \psi_l a_l$, $a_l := \psi_l^* \psi \in$ $\mA$, so that $\mM$ is finitely generated. In the sequel, for every $r \in \bN$ we will denote the closed span of products of the type $\psi_1 \cdots \psi_r$, $\psi_i \in \mM$, by $\mM^r \subset \mB$. It is clear that $\mM$ may be identified with the $r$-fold internal tensor power of $\mM$. In the same way, the closed span of elements of the type $\psi' \psi^*$, $\psi' \in \mM^s$, $\psi \in \mM^r$, may be identified with the space $(\mrs)$ of right $\mA$-module operators from $\mM^r$ to $\mM^s$ (note that since $\mM$ is finitely generated, every element of $(\mrs)$ is a compact operator). 

A version of the following theorem has been proved in \cite[Proposition 3.12]{Vas05}; the proof that we give here presents the modifications due to the fact that we consider gauge actions instead of group actions.
\begin{thm}
\label{thm_dual0}
Let $\mG \subseteq \mcUE$ be a compact group bundle with a dual action $\mu : ( \cog , \sigma_\mG ) \to$ $( \mA , \rho )$. If $\mA' \cap (\mc) =$ $\mZ$, then the following properties hold:
\begin{enumerate}
\item There is a finitely generated $\zro$-Hilbert $\mZ$-bimodule $\mM \subset \mB$ with support $1$, isomorphic to $\wE \otimes_{C(X^\rho)} \mZ$ as a right Hilbert $\mZ$-module;
\item The gauge action $\alpha$ (see (\ref{def_ada})) restricts to a gauge action $U : \mG \times_X \wa \mM \to\wa \mM$;
\item $( \mrs )_\mG =$ $( \rhors )$, $\forall r,s \in \bN$.
\end{enumerate}
\end{thm}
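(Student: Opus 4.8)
The plan is to construct the bimodule $\mM$ directly inside $\mB$ as the image $j(\wE)$ under the canonical embedding, and then to verify the three asserted properties by transporting structure from $\coe$ through $j$ and using the relative commutant hypothesis $\mA' \cap \mc = \mZ$. First I would set $\mM := \overline{\mathrm{span}}\{ j(\psi) a : \psi \in \wE, a \in \mZ \} \subset \mB$; the relation (\ref{30}(2)) gives $\rho(a)\psi = \psi a$ for $\psi \in j(\wE)$, which is exactly what makes the left $\mZ$-action and the right $\mZ$-action both land inside $\mM$ and makes $\mM$ a $C(X^\rho)$-Hilbert $\mZ$-bimodule. Since $\mZ = \mA' \cap \mc$ contains $\zro$ (which by the introduction lies in the centre of both $\mA$ and $\mB$), the $C(X^\rho)$-structure is inherited. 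For the support claim I would take the generators $\{\psi_l\}$ of $\wE$ and use the Cuntz-Pimsner relation $\sum_l \psi_l \psi_l^* = 1$ from (\ref{def_cp}), which $j$ preserves, so $\sum_l j(\psi_l) j(\psi_l)^* = 1$; this gives support $1$ and finite generation. The right-Hilbert-module isomorphism $\mM \simeq \wE \otimes_{C(X^\rho)} \mZ$ should follow from the fact that the $\mZ$-valued inner product $\langle j(\psi), j(\psi')\rangle = j(\psi)^* j(\psi')$ restricts correctly; one computes $j(\psi)^* j(\psi') = j(\langle \psi,\psi'\rangle_{\wE})$, landing in $j(C(X^\rho)) \subseteq \mZ$, so the natural map $\wE \odot_{C(X^\rho)} \mZ \to \mM$ is inner-product preserving and surjective, hence extends to the asserted isomorphism of right Hilbert $\mZ$-modules.

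For item (2), I would observe that $\alpha$ from (\ref{def_ada}) is defined by $\alpha(y, v\cdot \wa j(\xi)) := v \cdot \wa j(\wa y(\xi))$, and that on the generating subspace $\wa j(\wE)$ the coefficient $v \in \wa\mA$ may be taken to be the identity; thus $\alpha$ maps $\wa\mM$ into itself fibrewise, and the restriction $U := \alpha|_{\mG \times_X \wa\mM}$ inherits continuity from $\alpha$ and the isometry property from the fact that fibrewise $\wa y$ acts as a unitary on $\mE_x \simeq \bC^d$ (cf. (\ref{24})). I would then check that $U$ satisfies the defining relations (\ref{eq_tr_fa}) of a gauge action coupled with the trivial action on $\mZ$: the inner-product compatibility follows because $\wa y$ preserves $\langle\cdot,\cdot\rangle_{\wE}$ and acts trivially on $C(X^\rho) \subseteq \mZ$, and commutation with the left action holds because $\mZ$ is fixed by $\alpha$ (as $\mA$ is the fixed-point algebra of $\alpha$ and $\mZ \subseteq \mA$). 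This realises $\mM$ as a $\mG$-Hilbert $\mZ$-bimodule in the sense of Definition \ref{def_fabm}.

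For item (3), the inclusion $(\mrs)_\mG \supseteq (\rhors)$ should be the routine direction: an element of $(\rhors)$ intertwines $\rho^r$ and $\rho^s$, and via $\sigma|_\mA = \rho$ together with (\ref{eq_erssrs}) one identifies it with a $\mG$-invariant right-$\mZ$-module operator between the tensor powers $\mM^r$ and $\mM^s$. The reverse inclusion $(\mrs)_\mG \subseteq (\rhors)$ is where I expect the main obstacle. Here one must show that every $\mG$-equivariant operator $T : \mM^r \to \mM^s$ actually intertwines $\rho^r$ and $\rho^s$; the key is that $T$, being a right-$\mZ$-module map of finitely generated bimodules with support $1$, can be written as a finite sum $\sum \psi_M' \psi_L^*$ with $|M|=s$, $|L|=r$ (as in the span description before Lemma \ref{lem_og}), and $\mG$-invariance forces this expression to lie in $\mu((\ers)_\mG) = \mu(\sgrs) \subseteq (\rhors)$. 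The crucial ingredient is the density Lemma \ref{lem_og}, which guarantees that the $\mG$-invariant elements of the Cuntz-Pimsner algebra are exhausted by the intertwiner spaces $(\ers)_\mG$, combined with the relative commutant hypothesis $\mA' \cap \mc = \mZ$ to rule out extra invariants. I would use an averaging argument with the invariant mean $m_\varphi$ induced by the invariant $C(X^\rho)$-functional $\varphi$ to project a general element of $(\mrs)$ onto its $\mG$-invariant part, showing this part lands in the image of the dual action and hence, by the defining property $\mu(\ers)_\mG \subseteq (\rhors)$ of the dual action, inside $(\rhors)$. The delicate point will be verifying that the relative commutant condition, rather than merely the fixed-point description, is what pins down equality rather than just inclusion.
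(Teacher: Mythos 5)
Your construction of $\mM$ and the verification of Points 1 and 2 are essentially the paper's argument, with one cosmetic difference: the paper defines $\mM$ as the relative intertwiner space $\{\psi\in\mc : \psi a = \rho(a)\psi,\ a\in\mA\}$ and then uses the Cuntz relations and the commutant hypothesis to show it is exactly the right $\mZ$-module generated by $j(\wE)$, whereas you take the latter description as the definition. That is workable, but note that stability of your $\mM$ under the \emph{left} $\mZ$-action is not a consequence of (\ref{30}(2)) alone (which only converts right multiplication by $z$ into left multiplication by $\rho(z)$); you need $j(\psi)^* z\, j(\psi')\in\mA'\cap(\mc)=\mZ$, so the relative commutant hypothesis already enters at this point.

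The genuine gap is in Point 3, where you have the difficulty reversed and the strategy for the direction you call hard is incorrect. For $(\mrs)_\mG\subseteq(\rhors)$: every element of $(\mrs)$, invariant or not, is a limit of sums $\sum\psi'\psi^*$ with $\psi'\in\mM^s$, $\psi\in\mM^r$, and hence intertwines $\rho^r$ and $\rho^s$ automatically by the defining relation of $\mM$; the only role of $\mG$-invariance is to force $t\in\mA$ (via $\mA$ being the fixed-point algebra of $\alpha$), so that $t$ lands in $(\rhors)$ as defined inside $\mA$. Your proposal instead tries to push such a $t$ into $\mu(\ers)_\mG$, but by Lemma \ref{lem_da_dual} that space is $(\rhors)_\eps$, which is strictly smaller than $(\rhors)$ whenever $\rho$ is only quasi-symmetric: a general element of $(\mrs)$ has coefficients $z_{LM}\in\mZ$ rather than in $C(X^\rho)$ and does not lie in $j(\coe)$ at all, so neither Lemma \ref{lem_og} nor an averaging argument can close this step. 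Conversely, the direction you dismiss as routine, $(\rhors)\subseteq(\mrs)_\mG$, is where the hypothesis $\mA'\cap(\mc)=\mZ$ does its real work: for $t\in(\rhors)$ one computes $t_{LM}:=\psi_L^*\, t\, \varphi_M\in\mA'\cap(\mc)=\mZ$, whence $t=\sum_{LM}\psi_L t_{LM}\varphi_M^*$ exhibits $t$ as an operator in $(\mrs)$, which is $\mG$-invariant because $t\in\mA$. Without this computation you have no way to realize an abstract intertwiner of $\rho^r,\rho^s$ as an operator between tensor powers of $\mM$, and with your proposed argument the equality in Point 3 would collapse to $(\mrs)_\mG=(\rhors)_\eps$, which is false in the non-symmetric situation the theorem is designed to cover.
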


\begin{proof}
Point 1: define
\[
\mM :=
\left\{
\psi \in \mc : \psi a = \rho (a) \psi , a \in \mA
\right\} \ ;
\]
since $\psi^* \psi' \in \mA' \cap (\mc)$ $= \mZ$, and $f \psi = \psi f$, $f \in \zro$, $\psi, \psi' \in \mM$, we find that $\mM$ is a $\zro$-Hilbert $\mZ$-bimodule in $\mc$. Moreover, (\ref{30}) implies that $j(\wE) \subset \mM$. Let now $\left\{ \psi_l \right\}$ be a finite set of generators for $j(\wE) \simeq \wE$. Since $j$ is unital, the relations (\ref{def_cp}) imply that $\mM$ has support the identity in $\mc$, and that $\mM$ is generated by elements of $j(\wE)$ as a right $\mZ$-module; this implies the isomorphism $\mM \simeq \wE \otimes_{C(X)} \mZ$. 
Point 2: let us consider the restrictions $\eta_x : \mM \to \mM_x :=$ $\eta_x (\mM)$ of the fibre epimorphism, and $v \in \mM_x$. By local triviality of $\mE$ there are sections $\varphi_1$, $\ldots$, $\varphi_d \in$ $j(\wE) \simeq$ $\wE$ fulfilling the Cuntz relations 
\[
\eta_x (\varphi_i^* \varphi_j) = \delta_{ij} \eta_x(1)
\ \ , \ \ 
\sum_i \eta_x (\varphi_i \varphi_i^*) = \eta_x(1) 
\ \ .
\]
In this way, $v =$ $\sum_j e_{j,x} \eta_x (z_j)$, where $z_j \in \mZ$ and $e_{j,x} :=$ $\eta_x(\varphi_j)$. By definition of $\alpha$, and by (\ref{eq_act_od}), for every $y \in G_x$ we find 
\[
\alpha (  y , v ) = \sum_{k,j} y_{kj} e_{j,x} \eta_x (z_j)
\]
where $y_{kj} \in \bC$ are the matrix coefficients of $y$. The above equality implies that $\alpha ( y,v ) \in \mM_x$, thus $\mM$ is stable with respect to the gauge action $\alpha$. We denote the restriction of $\alpha$ to $\mM$ by $U$. Since $\alpha^x_y \circ \eta_x (\psi^* \psi') =$ $\eta_x (\psi^* \psi')$ (in fact $\psi^* \psi' \in \mZ$), and $\alpha^x_y \circ \eta_x (\psi z) = $ $\alpha^x_y \circ \eta_x (\psi) \cdot \eta_x(z)$, $z \in \mZ$, we conclude that $U$ is actually a unitary gauge action in the sense of Definition \ref{def_fabm}, with trivial associated gauge actions on $\mZ$.
Point 3: if $t \in (\mrs)_\mG$ then by definition of $U$ we find $t \in \mA$. Moreover, since $t$ is sum of terms of the type $\psi' \psi^*$, $\psi \in \mM^r$, $\psi' \in \mM^s$, by definition of $\mM$ we conclude that $t \in (\rhors)$ (note in fact that $\psi \in \mM^r$ implies $\rho^r(a) \psi =$ $\psi a$, $a \in \mA$). On the converse, let $t \in (\rhors)$; we fix finite sets of generators $\left\{ \psi_L \right\} \subset \mM^s$, $\left\{ \varphi_M \right\} \subset \mM^r$, in such a way that $\sum_L \psi_L \psi_L^* =$ $\sum_M \varphi_M \varphi_M^* =$ $1$, and note that $t_{LM} := \psi_L^* t \varphi_M \in$ $\mA' \cap (\mc) =$ $\mZ$ (in fact, $t_{LM} a =$ $\psi_L^* t \rho^r(a) \varphi_M =$ $\psi_L^* \rho^s(a) t \varphi_M =$ $a t_{LM}$, $a \in \mA$). In this way, $t = \sum_{LM} \psi_L t_{LM} \psi_M^*$ belongs to $(\mrs)_\mG$.
%
%
%
\end{proof}

\section{Special Endomorphisms.}
\label{cp_spec}

\subsection{Permutation symmetry: weak forms.}
\label{key_ps}

Let $\mA$ be a unital \sC algebra with centre $\mZ$. A unital endomorphism $\rho$ of $\mA$ has {\em permutation symmetry} if there is a unitary representation $p \mapsto \eps (p)$ of the group $\bP_\infty$ of finite permutations of $\bN$ in $\mA$, such that:
\begin{equation}
\label{eq_ps}
\left\{
\begin{array}{ll}
\eps (\bS p) = \rho \circ \eps (p)     \\
\eps := \eps (1,1) \in (\rho^2 , \rho^2)     \\
\eps (s,1) \ t = \rho (t) \ \eps (r,1) \  ,  \ t \in (\rhors)
\ ,
\end{array}
\right.
\end{equation}
\noindent where $(r,s) \in \bP_{r+s}$ exchanges the first $r$ terms with the remaining $s$, and $\bS$ is the shift $(\bS p)(1) := 1$, $(\bS p)(n) := 1 + p(n-1)$, $p \in \bP_\infty$. The above properties imply that
\begin{equation}
\label{weak_perm}
\eps(p) \in (\rho^n , \rho^n) \ \ , \ \ n \in \bN \ , \ p \in \bP_n \ ;
\end{equation}
\noindent we say that $\rho$ has {\em weak permutation symmetry} if just (\ref{eq_ps}(1)),(\ref{weak_perm}) hold. In such a case, we call {\em symmetry intertwiners} the elements of $(\rhors)$ for which (\ref{eq_ps}(3)) holds:
\[
( \rhors )_\eps := 
\left\{ t \in (\rhors) : \eps(s,1) t = \rho(t) \eps (r,1) \right\} \ .
\]
\noindent We denote the \sC subalgebra of $\mA$ generated by the intertwiner spaces $(\rhors)$, $r,s \in \bN$, by $\oro$; in particular, we denote the \sC algebra generated by the symmetry intertwiners by $\soro$. We say that $\rho$ has {\em permutation quasi-symmetry} if 
\begin{equation}
\label{gps3}
(\rhors) = \rho^s (\mZ) \cdot (\rhors)_\eps = (\rhors)_\eps \cdot \rho^r (\mZ) \ . 
\end{equation}
\noindent Here $\rho^s (\mZ) \cdot (\rhors)_\eps$ denotes the vector space spanned by elements of the type $\rho^s(z)t$, $z \in \mZ$, $t \in (\rhors)_\eps$. From (\ref{gps3}), we see that if $\rho$ has permutation quasi-symmetry, then the obstacle to get permutation symmetry is given by elements of $\mZ$ that are not $\rho$-invariant. 
The above properties may be summarized by saying that the category $\wa \rho$ with objects $\rho^r$, $r \in \bN$, and arrows $(\rhors)$ is a tensor quasi-symmetric \sC category, whilst the category $\wa \rho_\eps$ having the same objects as $\wa \rho$ and arrows $(\rhors)_\eps$ is a tensor symmetric \sC subcategory of $\wa \rho$ (see \cite[\S 4.1]{Vas05}).

\begin{rem}
\label{rem_wps}
{\it
By \cite[(2.6)]{DR87}, to get a weak permutation symmetry for $\rho$ it suffices that there is $\eps \in$ $( \rho^2 , \rho^2 )$ such that $\eps = \eps^* = \eps^2 = 1$. In fact, each $\eps (p)$, $p \in \bP_\infty$, may be recovered in terms of products of the type $\eps \rho (\eps) \ldots \rho^n (\eps)$, $n \in \bN$.
}
\end{rem}

If $\mE \to X$ is a vector bundle and $\mG \subseteq \mcUE$ is a compact group bundle, then $\sigma_\mG \in$ ${\bf end}_X \cog$ has permutation symmetry, induced by the flip operator $\theta \in$ $(\mE^2,\mE^2)_\mG =$ $(\sigma_\mG^2,\sigma_\mG^2)$ (see \cite[Remark 4.5]{Vas04}).

Weak forms of permutation symmetry are strictly related to dual actions:
\begin{lem}
\label{lem_da_dual}
Let $( \mA , \rho )$ be a \sC dynamical system, $\mE \to X^\rho$ a vector bundle, and $\mG \subseteq \mE$ a compact group bundle with a dual action $\mu : ( \cog , \sigma_\mG ) \to$ $( \mA,\rho )$. Then $\rho$ has a weak permutation symmetry $\eps$.
Moreover, if $\mA' \cap (\mc) =$ $\mZ$, then for every $r,s \in \bN$ the map $\mu$ restricts to an isomorphism $(\ers)_\mG \to (\rhors)_\eps$ of Banach $\zro$-bimodules.
\end{lem}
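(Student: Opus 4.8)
\emph{Part 1.} The plan is to transport the canonical symmetry of the model endomorphism $\sigma_\mG$ along the functor $\mu$. On the model side the category $\wa\mG$ is symmetric, and $\sigma_\mG$ carries a genuine permutation symmetry generated by the flip $\theta\in(\mE^2,\mE^2)_\mG=(\sigma_\mG^2,\sigma_\mG^2)$; write $\theta(p)\in(\mE^n,\mE^n)_\mG$, $p\in\bP_n$, for the associated unitary representation (products of the $\sigma_\mG^{\,i}(\theta)$). Since $\mu$ is a unital $\zro$-monomorphism with $\mu\circ\sigma_\mG=\rho\circ\mu$ that maps each $(\mE^n,\mE^n)_\mG$ into $(\rho^n,\rho^n)$, I would set $\eps(p):=\mu(\theta(p))$ and check the axioms of weak permutation symmetry directly: $\eps(p)\in(\rho^n,\rho^n)$ is (\ref{weak_perm}); multiplicativity of $\mu$ and of $p\mapsto\theta(p)$ makes $\eps$ a unitary representation of $\bP_\infty$; and
\[
\eps(\bS p)=\mu(\theta(\bS p))=\mu(\sigma_\mG(\theta(p)))=\rho(\mu(\theta(p)))=\rho(\eps(p))
\]
is (\ref{eq_ps}(1)). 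Equivalently, $\eps:=\eps(1,1)=\mu(\theta)$ is a self-adjoint involution in $(\rho^2,\rho^2)$, so the conclusion also follows from Remark~\ref{rem_wps}.

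\emph{Part 2.} Assume now $\mA'\cap(\mc)=\mZ$ and fix $r,s\in\bN$. As a $\zro$-monomorphism $\mu$ is isometric and $\zro$-bilinear, hence injective on $(\ers)_\mG$. Its image lies in $(\rhors)_\eps$: since $\wa\mG$ is symmetric, naturality of $\theta$ gives $\theta(s,1)\,v=\sigma_\mG(v)\,\theta(r,1)$ for every $v\in(\ers)_\mG$, and applying $\mu$ turns this into $\eps(s,1)\,\mu(v)=\rho(\mu(v))\,\eps(r,1)$, i.e. $\mu(v)\in(\rhors)_\eps$. The only substantial point is surjectivity onto $(\rhors)_\eps$. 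For this I would invoke \thmref{thm_dual0}: under the relative commutant hypothesis there is a $\zro$-Hilbert $\mZ$-bimodule $\mM\subset\mc$ with $j(\wE)\subset\mM$ and $(\rhors)=(\mrs)_\mG$. Hence any $t\in(\rhors)_\eps$ is a $\mG$-invariant element of $(\mrs)$, and expanding through the generators $\psi_l\in j(\wE)$ one writes $t=\sum_{L,M}\psi_L\,t_{LM}\,\psi_M^*$ with $|L|=s$, $|M|=r$ and central coefficients $t_{LM}=\psi_L^*\,t\,\psi_M\in\mA'\cap(\mc)=\mZ$, exactly as in the proof of \thmref{thm_dual0}(3).

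The heart of the argument, and the step I expect to be the main obstacle, is to show that the symmetry relation forces these coefficients to be $\rho$-invariant, i.e. $t_{LM}\in\zro$. The mechanism is transparent for $r=s=1$: with $\eps=j(\theta)$ acting as the flip on $j(\wE)$ and $\sigma(b)=\sum_l\psi_l b\psi_l^*$ the canonical endomorphism of $\mc$ (so $\sigma|_\mA=\rho$), a short computation gives
\[
\eps\,t=\sum_{l,m}\sigma(\psi_l)\,t_{lm}\,\psi_m^*
\ \ , \ \
\rho(t)\,\eps=\sum_{l,m}\sigma(\psi_l)\,\rho(t_{lm})\,\psi_m^* \ ,
\]
so that $\eps\,t=\rho(t)\,\eps$ reads $\sum_{l,m}\sigma(\psi_l)\,[\,t_{lm}-\rho(t_{lm})\,]\,\psi_m^*=0$; extracting coefficients against the Parseval relations $\sum_l\sigma(\psi_l)\sigma(\psi_l)^*=\sum_m\psi_m\psi_m^*=1$ (passing to a local trivialisation of $\mE$ if one wants an orthonormal frame) yields $t_{lm}=\rho(t_{lm})$. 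For general $r,s$ the same computation with the flips $\eps(s,1)$, $\eps(r,1)$ applies: the single strand permuted across the block contributes exactly one application of $\rho$ to the central coefficient on the right-hand side and none on the left, whence $t_{LM}=\rho(t_{LM})$. Keeping track of how these flips interact with the non-$\rho$-invariant central coefficients is the delicate bookkeeping. Once $t_{LM}\in\zro=C(X^\rho)$, the element $t=\sum_{L,M}\psi_L\,t_{LM}\,\psi_M^*$ lies in the $\zro$-coefficient span $j(\ers)$, and its $\mG$-invariance places it in $j((\ers)_\mG)=\mu((\ers)_\mG)$. Thus $\mu$ is onto $(\rhors)_\eps$; being isometric, it is the asserted isomorphism of Banach $\zro$-bimodules. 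Everything outside the coefficient computation is functoriality of $\mu$ together with \thmref{thm_dual0}.
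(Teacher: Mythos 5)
Your proof is correct and follows essentially the same route as the paper: $\eps(p):=\mu(\theta(p))$ for Part 1, and for Part 2 the decomposition $t=\sum_{LM}\psi_L\,t_{LM}\,\psi_M^*$ with $t_{LM}=\psi_L^* t\psi_M\in\mA'\cap(\mc)=\mZ$, followed by the observation that the symmetry relation forces $t_{LM}\in\zro$. The general-$(r,s)$ bookkeeping you flag as the main obstacle is in fact a one-line identity in the paper, $\rho(t_{LM})=\sigma(\psi_L)^*\,\rho(t)\,\sigma(\psi_M)=\psi_L^*\,\eps(1,s)\,\eps(s,1)\,t\,\eps(1,r)\,\eps(r,1)\,\psi_M=t_{LM}$, obtained from $\sigma(\psi_L)=\eps(s,1)\psi_L$ (a consequence of (\ref{eq_jers})) and $\rho(t)=\eps(s,1)\,t\,\eps(1,r)$, so no coefficient extraction or local trivialisation is needed.
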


\begin{proof}
Since $\theta (r,s) \in$ $( \sigma_\mG^{r+s} , \sigma_\mG^{r+s} )$, $r,s \in \bN$, defining $\eps (r,s) := \mu \circ \theta (r,s)$ we obtain a weak permutation symmetry for $\rho$.
Now, if $t \in (\rhors)_\eps$ then using the decomposition $t =$ $\sum_{LM} \psi_L t_{LM} \psi_M^*$, $t_{LM} :=$ $\psi_L^* t \psi_M$ (where $\psi_L \in$ $j(\wE^s)$, see (\ref{def_psil})), in the same way as in the proof of Theorem \ref{thm_dual0} we find $t_{LM} \in$ $\mA' \cap (\mc) =$ $\mZ$. Moreover, we also find
\[
\rho (t_{LM})
=
\psi_L^* \eps (1,s) \eps (s,1) t \eps (1,r) \eps (r,1) \psi_M
=
t_{LM}
\ .
\]
This implies $t_{LM} \in$ $\zro$, so that $t \in j(\ers)$. Since $j(\ers) \cap \mA =$ $\mu (\ers)_\mG$, we conclude that $t \in \mu (\ers)_\mG$. On the converse, the fact that $\sigma_\mG \in$ ${\bf end}_{X^\rho} \cog$ has permutation symmetry, and the definition of $\eps$, imply that $\mu(\sgrs) =$ $\mu( \ers )_\mG \subseteq$ $( \rhors )_\eps$, and this proves that $(\rhors)_\eps =$ $\mu ( \ers )_\mG$.
\end{proof}

\subsection{Special endomorphisms and Crossed Products.}
\label{sec_spe}

The notion of {\em special conjugate} may be regarded as an algebraic counterpart of the property of a finite-dimensional group representation of being with determinant one, and plays a crucial role in the setting of the Doplicher-Roberts theory (\cite[\S 4]{DR89A},\cite[p.58]{DR90}). To take into account the class of examples in \S \ref{key_group_duals}, we generalize the definition of special conjugate in the following sense: we say that $\rho$ is {\em (weakly/quasi) special} if $\rho$ has (weak/quasi) permutation symmetry, and for some $d \in \bN$, $d > 1$, there is a Hilbert $\zro$-bimodule $\mR \subset ( \iota,\rho^d )_\eps$ such that
\begin{equation}
\label{scp2}
\left\{
\begin{array}{ll}
R^* \rho (R') \ = \ (-1)^{d-1} d^{-1} R^* R' \ , \ R,R' \in \mR      \\
{\mR}{\mR}^*  \ := \ 
{\mathrm{span}}\left\{ R' R^*  :  R , R' \in \mR \right\} \ = \ 
\zro P_{\eps,d} 
\end{array}
\right.
\end{equation}
\noindent When $\mR$ exists, it is unique (\cite[Lemma 4.8]{Vas05}). Let $K(\mR)$ denote the \sC algebra of right $\zro$-module operator of $\mR$; then, (\ref{scp2}(2)) implies that there is an isomorphism $K(\mR) \simeq \zro$. The previous isomorphism, and the Serre-Swan theorem, imply that $\mR$ is the module of sections of a line bundle over $X^\rho$; thus, the first Chern class $c_1 (\rho) \in H^2 ( X^\rho , \bZ )$ of such a line bundle is a complete invariant of $\mR$. The Chern class $c_1(\rho)$ vanishes if and only if $\rho$ fulfilles the special conjugate property in the sense of Doplicher and Roberts (\cite[Lemma 4.2]{DR89A}); this is equivalent to require that there is a partial isometry $S \in \mR$, $S^*S = 1$, $SS^* = P_{\eps,d}$. The {\em class} of $\rho$ is defined by
\begin{equation}
\label{def_class}
d \oplus c_1(\rho) \in 
\bN \oplus H^2 ( X^\rho , \bZ )
\ .
\end{equation}
\noindent A weakly special endomorphism is denoted by $( \rho , \eps , \mR )$.

A class of examples follow. With the notation of \S \ref{key_group_duals}, if $\mG \subseteq \mcSUE$ then $( \sigma_\mG , \theta , (\iota,\lambda \mE) )$ $\in {\bf end}_X \cog$ is special, where $\lambda \mE \subset \mE^d$ is the $d$-fold exterior power of $\mE$; moreover, $c(\sigma_\mG) = d \oplus c_1(\mE)$, where $d$ is the rank of $\mE$, and $c_1(\mE)$ is the first Chern class (see \cite[\S 2.2,Example 4.2]{Vas05}).

The following basic result has been proved in \cite[Theorem 5.1]{Vas05}.

\begin{thm}
\label{sue_action}
Let $( \rho , \eps , \mR ) \in$ ${\bf end} \mA$ be a weakly special endomorphism with class $d \oplus c_1(\rho)$, $d \in \bN$, $c_1(\rho) \in$ $H^2(X^\rho,\bZ)$. Then, for every rank $d$ vector bundle $\mE \to X^\rho$ with first Chern class $c_1(\rho)$, there is a dual $\mcSUE$-action 
\[
\mu : \mO_\mcSUE \to \mA \ .
\]
Moreover, $\soro$ is a continuous bundle of \sC algebras over $X^\rho$, with fibres isomorphic to $\mO_{G^x}$, $x \in X^\rho$, where $G^x \subseteq \sud$ is a compact Lie group unique up to conjugacy in $\sud$. Finally, there is an inclusion $\mO_{\mcSUE} \hra \soro$ of \sC algebra bundles.
\end{thm}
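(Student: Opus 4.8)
\emph{The plan is} to read the three assertions as, in order, a fibred Doplicher--Roberts embedding, a fibrewise Doplicher--Roberts reconstruction, and the compatibility linking the two, with the Chern-class hypothesis $c_1(\mE) = c_1(\rho)$ serving throughout to glue the pointwise data into global objects over $X^\rho$.

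\textbf{Construction of the dual action.} I would start from the example following (\ref{def_class}): for $\mG = \mcSUE$ the triple $( \sigma_\mG , \theta , (\iota,\lambda \mE) )$ is special, its special object $(\iota,\lambda\mE)$ being the section module of the determinant line bundle $\lambda \mE \subset \mE^d$, and the category $\wa \mcSUE$ (hence $\cog$) is generated over $\zro$ by the flip $\theta$ together with $\lambda\mE$. By hypothesis $\rho$ carries a symmetry $\eps$ and a special object $\mR \subset (\iota,\rho^d)_\eps$ obeying the same relations (\ref{scp2}), with $\mR$ the section module of a line bundle of first Chern class $c_1(\rho)$. Since complex line bundles over the compact space $X^\rho$ are classified by their first Chern class and $c_1(\lambda\mE) = c_1(\mE) = c_1(\rho)$, I can fix a unitary $\zro$-module isomorphism $w : \lambda \mE \to \mR$. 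Declaring $\mu(\theta) := \eps$ and $\mu|_{\lambda\mE} := w$, and extending by tensor product and composition, produces a functor $\mu : \wa \mcSUE \to \wa \rho$; it is well defined precisely because $(\rho,\eps,\mR)$ furnishes a model of the universal relations --- namely (\ref{scp2}) and (\ref{weak_perm}) --- presenting $\wa \mcSUE$. Preservation of the $\zro$-valued categorical inner products makes $\mu$ isometric on each arrow space, hence faithful, so on the generated algebras it yields a $\zro$-monomorphism $\mu : \cog \hra \mA$ with $\mu \circ \sigma_\mG = \rho \circ \mu$ and $\mu(\ers)_\mG = \mu(\sgrs) \subseteq (\rhors)$: the required dual $\mcSUE$-action.

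\textbf{The fibres of $\soro$.} Since $\zro$ lies in the centre of $\mA$ and is $\rho$-invariant, it lies in the centre of $\soro$, so $\soro$ is a $\zro$-algebra and defines a bundle over $X^\rho$. To identify the fibre over $x$ I would quotient the Hom-spaces of the symmetric tensor \sC category $\wa\rho_\eps$ by $C_x(X^\rho)$: the result is an ordinary symmetric tensor \sC category with permutation symmetry and a special object of dimension $d$ (the image of $\mR$, of determinant type by (\ref{scp2}(2))). The Doplicher--Roberts duality theorem (\cite{DR89A,DR90}) then reconstructs a compact group $G^x$, unique up to isomorphism, realising this fibre as the representation category of $G^x$ and the generated algebra as $\mO_{G^x} = \mO_d^{G^x}$; the determinant-one normalisation forces $G^x \subseteq \sud$, and the fibre functor supplied by $\mE_x \simeq \bC^d$ makes the embedding unique up to conjugacy in $\sud$. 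Continuity of the field $x \mapsto \mO_{G^x}$, the one genuinely analytic point, I would obtain from the observation that all the structural data of the fibre categories --- the $\zro$-valued inner products on the intertwiner modules, the symmetry $\eps$, and the special object $\mR$ --- are continuous over $X^\rho$, so that the norm of any fixed $t \in \soro$, computed in the universal representation of the defining Cuntz-type relations, depends continuously on $x$.

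\textbf{The inclusion of bundles, and the main obstacle.} By construction $\mu$ carries flip-invariant intertwiners to symmetry intertwiners, so it restricts to a $\zro$-linear, $*$- and tensor-preserving map $(\ers)_\mG \to (\rhors)_\eps$ and therefore induces a \sC bundle morphism $\mO_{\mcSUE} \to \soro$; faithfulness makes it the asserted inclusion, which fibrewise is the standard inclusion $\mO_\sud \hra \mO_{G^x}$ determined by $G^x \subseteq \sud$. The hard part is the first step: producing a \emph{single} faithful functor $\mu$ over all of $X^\rho$, i.e.\ a relative (bundle) version of the Doplicher--Roberts embedding theorem, rather than merely fibrewise reconstructions. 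The decisive input is that the two determinant line bundles $\lambda\mE$ and $\mR$ carry the same Chern class $c_1(\rho)$ and are hence globally isomorphic; absent such a matching the pointwise embeddings need not glue, which is exactly the mechanism behind the non-existence phenomena for general $\mE$ announced in the introduction.
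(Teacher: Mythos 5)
The paper does not prove Theorem~\ref{sue_action} at all: it imports it verbatim from \cite[Theorem 5.1]{Vas05}, so there is no in-paper argument to compare yours against line by line. That said, your reconstruction is consistent with the only evidence the paper gives about the proof, namely the remarks immediately following the statement ($\mu(\theta)=\eps$, $\mu(\iota,\lambda\mE)_{\mcSUE}=\mR$, and the image of $\mu$ being the $\rho$-stable algebra generated by $\eps$ and $\mR$), and the overall strategy --- match the determinant line bundles via $c_1(\lambda\mE)=c_1(\mE)=c_1(\rho)$, send $\theta\mapsto\eps$ and $\lambda\mE\mapsto\mR$, extend by the universal relations, then reconstruct the fibres by pointwise Doplicher--Roberts duality --- is the right one and is the bundle version of \cite[\S 4]{DR89A}.

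Be aware, however, that three of your steps are assertions of precisely the theorems that constitute the content of the cited reference, not consequences of anything you establish. First, the claim that (\ref{scp2}) together with the permutation relations \emph{presents} $\wa\mcSUE$ (so that any model $(\eps,\mR)$ of these relations induces a well-defined tensor $*$-functor) is the bundle analogue of \cite[Theorem 4.2]{DR89A} and is the technical core of \cite{Vas05}; it does not follow from the generation statement alone. Second, faithfulness of $\mu$ on $\mO_{\mcSUE}$ does not follow from ``isometry on each arrow space'': the standard route is fibrewise simplicity of $\mO_{\sud}$ combined with the $\zro$-module structure, and your inner-product argument as stated only gives injectivity on the dense union $\cup_{r,s}(\ers)_{\mcSUE}$, not on its closure. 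Third, continuity of the field $x\mapsto \mO_{G^x}$ is exactly the hard analytic point (lower semicontinuity of the fibre norms), and ``the structural data vary continuously'' is not an argument --- in the trivial-bundle case this is \cite[Lemma 7.3]{Vas02g} and it genuinely uses the quasi-symmetry structure of the intertwiner spaces. Your sketch correctly identifies where the difficulties lie, but at each of these points it defers to results it would need to prove; as a self-contained proof it is incomplete, while as a reconstruction of the architecture of the proof in \cite{Vas05} it is accurate.
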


The image of $\mu$ is the \sC algebra generated by $\eps$, $\mR$ by closing with respect to the action of $\rho$. We have $\mu (\theta) = \eps$, $\mu (\iota , \lambda \mE)_\mcSUE = \mR$ (see \cite[Lemma 4.6]{Vas05}). For every rank $d$ vector bundle $\mE \to X^\rho$ with first Chern class $c_1(\rho)$, we introduce the notation
\begin{equation}
\label{def_be}
\mB_\mE := \mA \rtimes_\mu \wa \mcSUE \ .
\end{equation}
By (\ref{def_sigmab}), $\mB_\mE$ comes equipped with the endomorphism $\sigma \in {\bf end} \mB_\mE$. By definition of crossed product by a dual action, there are $\zro$-morphisms
\begin{equation}
\label{def_j}
\left\{
\begin{array}{ll}
j : ( \coe , \sigma_\mE ) \hra ( \mB_\mE , \sigma )
\\
\mu : (  \mO_\mcSUE , \sigma_\mcSUE ) \hra ( \mA , \rho )
\ , \
j |_{\mO_\mcSUE} = \mu
\ .
\end{array}
\right.
\end{equation}
Note that $\sigma$ has weak permutation symmetry induced by $\eps$. Since $\sigma_\mE \in$ ${\bf end}_{X^\rho} \coe$ has symmetry $\theta$, we find
%
%
\begin{equation}
\label{eq_jers}
j ( \ers ) 
\ = \
j ( \sers )_\theta 
\ \subseteq \
( \srs )_\eps 
\ .
\end{equation}
Moreover, there is a gauge action
\begin{equation}
\label{def_asue}
\alpha : \mcSUE \times_X \wa \mB_\mE \to \wa \mB_\mE \ ,
\end{equation}
so that it is of interest to investigate some technical properties of the bundle $p : \mcSUE \to X^\rho$:
\begin{lem}
\label{lem_sue}
Let $X$ be a compact Hausdorff space, $d \in \bN$, and $\mE \to X$ a rank $d$ vector bundle. Then there is an invariant $C(X)$-functional $\varphi : C(\mcSUE) \to C(X)$. Moreover, for every group bundle $\mG \subseteq \mcSUE$ the natural projection $p_\mG : \mcSUE \to \Omega :=$ $\mG \backslash \mcSUE$ has local sections: in other terms, for every $\omega \in \Omega$ there is an open neighbourhood $W \ni \omega$ with a continuous map $s_W : W \to \mcSUE$ such that $p_\mG \circ s_W = id_W$.
\end{lem}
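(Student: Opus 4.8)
The plan is to establish the two assertions separately: the invariant functional by fibrewise Haar integration, and the local sections by a transversal-slice argument in a local trivialization.

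For the invariant functional, I would exploit local triviality of $\mE$. Choose a finite open cover $\{X_i\}$ of $X$ over which $\mE$ trivializes, so that $\mcSUE|_{X_i} \simeq X_i \times \sud$ as a bundle of groups: the linear trivialization $\mE_x \simeq \bC^d$ restricts to a fibrewise group isomorphism $\mcSUE_x \simeq \sud$. Each fibre $\mcSUE_x$ is then a compact group carrying a unique normalized Haar measure $\mu_x$, and I define $\varphi z(x) := \int_{\mcSUE_x} z \, d\mu_x$ for $z \in C(\mcSUE)$. Positivity and $C(X)$-linearity are immediate from this pointwise formula (a function $f \in C(X)$ is constant on fibres). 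The only substantive point is continuity of $x \mapsto \varphi z(x)$, and since continuity is local I check it over each $X_i$: there the intrinsic $\mu_x$ pulls back to the fixed normalized Haar measure $dg$ of $\sud$ by uniqueness of Haar, so $\varphi z(x) = \int_{\sud} z(x,g)\, dg$, which is continuous in $x$ by compactness of $\sud$ and joint continuity of $z$ via a standard uniform-continuity estimate. The local descriptions patch because the transition maps act fibrewise by group automorphisms (conjugation by the cocycle of $\mE$) and hence preserve normalized Haar measure; this yields a well-defined invariant $C(X)$-functional.

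For the local sections I again pass to a trivializing chart. Near a chosen $\omega \in \Omega$ with $x_0 := q(\omega)$, write $\mcSUE|_{X_i} \simeq X_i \times \sud$ and $\mG|_{X_i} = \{(x,G_x) : x \in X_i\}$ with each $G_x \subseteq \sud$ a closed subgroup, and fix $g_0 \in \sud$ with $\omega = G_{x_0} g_0$. The classical theorem on homogeneous spaces of compact Lie groups (a closed subgroup $G_{x_0} \subseteq \sud$ makes $\sud \to G_{x_0}\backslash\sud$ a locally trivial principal bundle) furnishes a transversal slice $S \ni g_0$: a submanifold with $T_{g_0}S \oplus T_{g_0}(G_{x_0}g_0) = T_{g_0}\sud$, so that $(h,s) \mapsto hs$ is a diffeomorphism of $G_{x_0} \times S$ onto a saturated neighbourhood of $g_0$, and $S$ meets every $G_{x_0}$-coset near $G_{x_0}g_0$ in exactly one point. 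I would then define the candidate $s_W$ by sending a coset $G_x g$ near $\omega$ to the unique point of $G_x g \cap S$ (equivalently, the representative of $\omega$ nearest to $g_0$), transported back through the trivialization; by construction $p_\mG \circ s_W = id_W$.

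The hard part is to show that for $x$ close to $x_0$ every coset $G_x g$ near $G_{x_0}g_0$ still meets $S$ in a single point depending continuously on the coset, and this is exactly where the variation of the fibre groups $G_x$ must be controlled. I expect this uniform transversality across the varying fibres to be the main obstacle: transversality of $S$ to $G_{x_0}g_0$ is an open, compactness-stable condition, so it persists provided the family $x \mapsto G_x$ does not degenerate, that is, provided $\mG$ is a continuous field of groups (in particular the coset dimension does not drop, and nearby cosets neither shrink nor wrap so as to create extra intersections with $S$). Under this regularity—which the group bundles in force enjoy—continuity of $\omega \mapsto s_W(\omega)$ follows from continuity of the family of cosets in the Hausdorff metric together with the local diffeomorphism $G_{x_0}\times S \to \sud$, completing the construction; I would flag precisely the behaviour of $G_x$ under specialization as the delicate point on which the existence of continuous local sections rests.
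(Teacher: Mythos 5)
Your first half (the invariant functional) is correct and is essentially the paper's argument: the paper exhibits an $\sud$-valued cocycle for $\mcSUE$ by dividing the $\ud$-cocycle of $\mE$ by its determinants, observes that the transition automorphisms preserve the Haar state of $\sud$ (unimodularity/uniqueness of Haar), and then invokes \cite[Proposition 4.3]{Vas02g}; your fibrewise integration with patching over a trivializing cover is the same proof written out.

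The second half has a genuine gap, and it sits exactly where you flag it. Your slice $S$ is transversal to the single coset $G_{x_0}g_0$, and you then need every nearby coset $G_xg$ to meet $S$ in exactly one point, continuously in the coset. You condition this on $x\mapsto G_x$ being a continuous, non-degenerating field of groups and then assert that ``the group bundles in force enjoy'' this regularity. They do not, as a matter of the paper's standing hypotheses: \S 3 states explicitly that group bundles are \emph{not} assumed locally trivial and that the isomorphism class of the fibres may vary with $x$, and the standing reduction $\mG=\mSG$ does not repair this. Without such a hypothesis the conclusion genuinely fails, not just the proof. For instance, take $X=[0,1]$, $\mE$ trivial of rank $2$, and $G_x=\{1\}$ for $x<1/2$, $G_x=\bT$ (a maximal torus) for $x\geq 1/2$: this is a closed subgroup bundle of $X\times\mathbb{SU}(2)$, but at the class $\omega$ of $(1/2,1)$ no continuous local section of $p_\mG$ exists, because for $x_n\uparrow 1/2$ the singleton cosets $(x_n,\{1\})$ and $(x_n,\{t_0\})$, $t_0\in\bT\setminus\{1\}$, both converge to $\omega$ in the quotient while their (forced) lifts converge to the distinct points $(1/2,1)$ and $(1/2,t_0)$.

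So the uniform transversality you need is an additional regularity hypothesis on $\mG$, not a consequence of the setting, and your proof relocates rather than closes the issue. The paper's own proof does not attempt your slice construction at all: it simply records that $\mcSUE$ is a locally trivial bundle with fibre the compact Lie group $\sud$ and delegates the existence of local sections of $p_\mG$ to \cite[Lemma 2.5]{Vas02g}, where whatever hypotheses make the statement true are housed. To repair your argument you would have to either import that lemma as a black box, or state and verify an explicit continuity/local-triviality assumption on the family $x\mapsto G_x$ (e.g.\ that $\mG$ is a locally trivial subgroup bundle), under which your transversality-persistence argument does go through.
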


\begin{proof}
For every $u \in \ud$, we consider the group automorphism $\wa u \in {\bf aut} \sud$ defined by adjoint action $\wa u (g) :=$ $ugu^*$, $g \in \sud$; we maintain the same notation for the \sC algebra automorphism induced on $C(\sud)$, so that $\wa u \in$ ${\bf aut} C(\sud)$. If $( \left\{ X_i \right\} , \left\{ u_{ij} \right\} )$ is an $\ud$-cocycle associated with $\mE$ (see \cite{Kar}), then $\mcSUE$ is defined as the bundle with cocycle $( \left\{ X_i \right\} , \left\{ \wa u_{ij} \right\} )$. Let now $\lambda_{ij} (x) :=$ $\det u_{ij} (x)$, $x \in X_{ij}$, and $v_{ij} (x) := \ovl \lambda_{ij} (x) u_{ij} (x)$, $x \in X_{ij}$; in this way , for every pair $i,j$ a continuous map 
\[
v_{ij} : X_{ij} \to \sud
\]
is defined. Since $[\wa u_{ij} (x)] (g) =$ $[\wa v_{ij} (x)] (g)$, $g \in \sud$, we conclude that the bundle $\mcSUE$ has an associated $\sud$-cocycle $( \left\{ X_i \right\} , \left\{ \wa v_{ij} \right\} )$.
Let now $\varphi_0 : C(\sud) \to \bC$ denote the Haar measure of $\sud$; since $\sud$ is unimodular, for every $v \in \sud$ we find $\varphi_0 \circ \wa v =$ $\varphi_0$. Thus, applying \cite[Proposition 4.3]{Vas02g}, we conclude that the desired functional $\varphi : C(\mcSUE) \to C(X)$ exists.
About existence of local sections, we note that $\mcSUE$ is a locally trivial bundle with fibre the compact Lie group $\sud$, and apply \cite[Lemma 2.5]{Vas02g}.
\end{proof}

\begin{rem}
\label{rem_afp}
{\it
Existence of the invariant $C(X)$-functional $\varphi : C(\mcSUE) \to C(X)$ allows one to define an invariant mean $m_\varphi : \mB_\mE \to \mA$; using $m_\varphi$ we can conclude that $\mA$ is the fixed-point algebra of $\mB_\mE$ with respect to the action (\ref{def_asue}).
}
\end{rem}

We can now investigate the algebraic structure of the crossed product $\mB_\mE$.
Let us denote the centre of $\mB_\mE$ by $\mC_\mE$ and the spectrum of $\mC_\mE$ by $\Omega_\mE$. If $f \in \zro$, then $\psi f = \rho(f) \psi = f \psi$, $\psi \in j(\wE)$, thus $f$ commutes with $\mA$ and $j(\coe)$; this implies $f \in \mC_\mE$, so that there is a unital inclusion $\zro \hra \mC_\mE$ (i.e., $\mC_\mE$ is a $\zro$-algebra). We consider the surjective map defined by the Gel'fand transform
\begin{equation}
\label{def_ome_0}
q : \Omega_\mE \to X^\rho \ .
\end{equation}
The gauge action (\ref{def_asue}) restricts to a gauge action on $\mC_\mE$; by Gel'fand duality, there is a right action 
\begin{equation}
\label{def_aomega}
\alpha^* : \Omega_\mE \times_X \mcSUE \to \Omega_\mE
\ \ , \ \
\alpha^* (\omega,y) := \omega \circ \alpha^x_y \ , \ x = p(y) = q(\omega) \ ,
\end{equation}
in the sense of \cite[\S 3]{NT04}.

\begin{rem}
\label{rem_c}
{\it
$\mC_\mE$ is a continuous bundle of \sC algebras on $X$; for each $x \in X$, there is a closed group $G_x \subseteq \sud$ such that the fibre $\mC_{\mE,x}$ is isomorphic to $C(G_x \backslash \sud)$, i.e. $\Omega_x :=$ $q^{-1}(x)$ is homeomorphic to $G_x \backslash \sud$ (see \cite[Lemma 5.1]{Vas02g}).
}
\end{rem}

\begin{lem}
\label{lem_cp_str}
If $\rho$ has permutation quasi-symmetry, then the following properties hold:
\begin{enumerate}
\item  $\mA' \cap \mB_\mE =$ $\mC_\mE \vee \mZ$, where $\mC_\mE \vee \mZ$ denotes the \sC algebra generated by $\mC_\mE$, $\mZ$.
\item  $\mC_\mE$ is generated by elements of the type $t^* \varphi$, where $t \in (\iota , \rho^r)_\eps$, $\varphi \in j(\iota , \mE^r)$, $r \in \bN$.
\end{enumerate}
\end{lem}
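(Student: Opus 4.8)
The starting point I would isolate is the identity
\[
\psi\, b \;=\; \sigma(b)\,\psi \ , \qquad \psi\in j(\wE)\ ,\ b\in\mB_\mE\ ,
\]
valid for \emph{all} $b\in\mB_\mE$, not just $b\in\mA$. It follows from (\ref{def_sigmab}) and $\sum_l\psi_l\psi_l^*=1$, since $\sigma(b)\psi_m=\sum_l\psi_l b\,\psi_l^*\psi_m=\sum_l\psi_l b\,\langle\psi_l,\psi_m\rangle=(\sum_l\psi_l\psi_l^*)\psi_m\,b=\psi_m b$, using that $\langle\psi_l,\psi_m\rangle\in\zro$ is central. Iterating gives $\psi\,b=\sigma^r(b)\,\psi$ and, by adjunction, $b\,\psi^*=\psi^*\sigma^r(b)$ for $\psi\in j(\wE^r)$. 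The first payoff is a clean description of the centre: if $c\in\mC_\mE$ then $c=\sum_l c\,\psi_l\psi_l^*=\sum_l\sigma(c)\psi_l\psi_l^*=\sigma(c)$, while conversely $\sigma(c)=c$ together with $c\in\mA'$ forces $\psi c=\sigma(c)\psi=c\psi$; hence $\mC_\mE=\{\,c\in\mA'\cap\mB_\mE:\sigma(c)=c\,\}$. Both parts of the Lemma are most naturally read through this $\sigma$-fixed-point criterion.

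The inclusion $\supseteq$ in Part~1 is immediate: $\mC_\mE$ commutes with $\mA$ by definition of the centre, $\mZ=\mA'\cap\mA\subseteq\mA'\cap\mB_\mE$, and the latter is a \sC algebra, so it contains the \sC algebra $\mC_\mE\vee\mZ$ they generate. For the reverse inclusion the plan is to exploit the dense spanning of $\mB_\mE$ by monomials $\psi_L\,a\,\psi_M^*$ ($a\in\mA$, $\psi_L\in j(\wE^{|L|})$, $\psi_M\in j(\wE^{|M|})$) furnished by the crossed-product construction and $\sum_l\psi_l\psi_l^*=1$. Given $x\in\mA'\cap\mB_\mE$, I would use the invariant mean $m_\varphi:\mB_\mE\to\mA$ of Remark~\ref{rem_afp} together with the residual scalar gauge transformations $\bZ_d\subseteq\mSUE$ (the constant sections $\zeta\,\mathrm{id}_\mE$, $\zeta^d=1$, which scale each $\psi_l$ by $\zeta$) to split $x$ into homogeneous pieces, and then impose $xa=ax$; the commutation forces the matrix coefficients $\psi_L^*x\,\psi_M$ into intertwiner spaces between powers of $\rho$. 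Permutation quasi-symmetry (\ref{gps3}) then enters decisively, factorising each such coefficient as a product of a symmetry intertwiner in some $(\rho^r,\rho^s)_\eps$ and an element of $\rho^r(\mZ)$. Reassembled with $\psi_L,\psi_M^*$, the symmetry-intertwiner factor yields an element of $\mC_\mE$ (by the centrality checked in Part~2), while the $\mZ$-factor survives as a genuine central element of $\mA$, giving $x\in\mC_\mE\vee\mZ$.

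For Part~2 I would first verify that each $t^*\varphi$, with $t\in(\iota,\rho^r)_\eps$ and $\varphi\in j(\iota,\mE^r)$, is central. Commutation with $\mA$ is direct: for $a\in\mA$, $t^*\varphi\,a=t^*\rho^r(a)\varphi=a\,t^*\varphi$, using $\varphi a=\rho^r(a)\varphi$ (as $\varphi\in j(\wE^r)$ and $\sigma|_\mA=\rho$) and $a\,t^*=t^*\rho^r(a)$ (the adjoint of $t a=\rho^r(a)t$). Commutation with $j(\wE)$ is exactly where the subscript $\eps$ is used: both $t$ and $\varphi$ are symmetry intertwiners into the $r$-th power, for $\rho$ and for $\sigma$ respectively (the latter by (\ref{eq_jers}) and the fact that $\sigma$ carries the symmetry $\eps=\mu(\theta)$), so that moving a generator $\psi\in j(\wE)$ past $t^*$ and past $\varphi$ produces matching symmetry operators that cancel; equivalently one checks $\sigma(t^*\varphi)=t^*\varphi$ and invokes the criterion of the first paragraph. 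For the generating statement I would rerun the analysis of Part~1 on a central $c\in\mC_\mE$: $\sigma(c)=c$ pins down its homogeneity, and the quasi-symmetric factorisation of its matrix coefficients exhibits each homogeneous component as a finite combination of the $t^*\varphi$.

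The main obstacle is the reverse inclusion of Part~1 — showing that no commutant element escapes $\mC_\mE\vee\mZ$. The delicate point is that the relations $\psi a=\rho(a)\psi$ only allow central elements of $\mZ$ to be absorbed \emph{into} higher powers of $\rho$ and never extracted, so a purely algebraic normal-form argument stalls precisely on the non-$\rho$-invariant part of $\mZ$, i.e.\ the part obstructing honest permutation symmetry. I expect the clean route is to argue fibrewise over $X^\rho$: by Remark~\ref{rem_c} and the continuity of the bundles $\mB_\mE$ and $\mC_\mE$, the computation of $\mA'\cap\mB_\mE$ reduces at each $x\in X^\rho$ to the classical Doplicher--Roberts relative-commutant computation for the fibre field algebra with gauge group $G_x\subseteq\sud$, after which the quasi-symmetry hypothesis reassembles the fibres into $\mC_\mE\vee\mZ$ globally, with $m_\varphi$ providing the conditional expectation onto $\mA$ needed to control the limits.
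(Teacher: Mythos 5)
Your preliminary observations are sound and useful: the identity $\psi\,b=\sigma(b)\,\psi$ for all $b\in\mB_\mE$ and the resulting characterization $\mC_\mE=\{c\in\mA'\cap\mB_\mE:\sigma(c)=c\}$ are correct, the inclusion $\mC_\mE\vee\mZ\subseteq\mA'\cap\mB_\mE$ is indeed immediate, and your verification that each $t^*\varphi$ is central coincides with the paper's (commutation with $\mA$ from $j(\ers)\subseteq(\srs)$, commutation with $j(\wE)$ from the symmetry $\eps$). The problem is that both hard directions --- $\mA'\cap\mB_\mE\subseteq\mC_\mE\vee\mZ$ and the totality of $\{t^*\varphi\}$ in $\mC_\mE$ --- are left to mechanisms that do not work as described. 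First, the ``residual scalar gauge transformations'' in $\mSUE$ form only the centre $\bZ_d$ of each fibre $\sud$, so averaging over them produces a $\bZ_d$-grading, not a $\bZ$-grading: a piece of degree $0\bmod d$ still mixes balanced monomials $\psi_L a\psi_M^*$ with $|L|=|M|$ and unbalanced ones with $|L|-|M|=\pm d,\pm 2d,\dots$, and it is exactly for these that the matrix coefficients $\psi_L^*x\psi_M$ fail to land in intertwiner spaces by the naive commutation argument (one only gets $a\psi_L^*=\psi_L^*\rho^{|L|}(a)$, which does not match $\rho^{|M|}(a)$). Disentangling these requires the determinant elements $\mR$ and the full representation theory of $\sud$, which is precisely the content of the result the paper imports.

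Second, your fallback --- ``argue fibrewise over $X^\rho$, reducing at each point $x$ to the classical Doplicher--Roberts computation for $G_x$'' --- is not the paper's reduction and has its own gap: knowing $\pi_x(b)\in\pi_x(\mC_\mE\vee\mZ)$ for every $x$ does not by itself give $b\in\mC_\mE\vee\mZ$; passing from fibrewise membership to global membership needs a continuity/selection argument that you do not supply. What the paper actually does is localize over a finite cover $\{U_k\}$ of {\em trivializing open sets}: on each $U_k$ the bundle $\mcSUE|_{U_k}\simeq U_k\times\sud$ admits the constant sections, giving an honest strongly continuous $\sud$-action on $\mB_{\mE,U_k}$ with fixed-point algebra $\mA_{U_k}$, so the trivial-bundle result (\cite[Proposition 7.1]{Vas02g}, proved by Peter--Weyl decomposition of $\mC_\mE$ under $\sud$ and matching isotypic components with constant orthonormal sections of $\mE^r$ to produce the elements $W\in(\iota,\rho^r)_\eps$) applies verbatim; the local conclusions are then glued by a partition of unity $\{\lambda_k\}\subset\zro$, using that $\mC_\mE\vee\mZ$ is a $\zro$-module. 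This localization-plus-partition-of-unity step is the essential missing ingredient in your plan, for Part 1 and, consequently, for the totality claim in Part 2 as well.
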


\begin{proof}
Point 1: As a first step, we note that if $\mE$ is trivial then the result follows by \cite[Proposition 7.1]{Vas02g}. Let us now consider the general case in which $\mE$ is nontrivial; it is convenient to regard $\mB_\mE$, $\mA$ as $\zro$-algebras. Let $x \in X^\rho$, $U$ an open neighbourhood of $x$ trivializing $\mE$. We denote the restrictions of $\mB_\mE$, $\mA$ over $U$ by $\mB_{\mE,U}$, $\mA_U$ (see \S \ref{preli}); note that there are obvious inclusions
\begin{equation}
\label{eq_in}
\mA'_U \cap \mA_U \subseteq \mZ
\ \ , \ \ 
\mB'_{\mE,U} \cap \mB_{\mE,U} \subseteq \mC_\mE
\ \ .
\end{equation}
Now, $\mcSUE |_U \simeq U \times \sud$, and \cite[Proposition 3.3]{Vas02g} implies that there is a strongly continuous action
\[
\alpha_U : \sud \to \mB_{\mE,U}
\]
(in fact, $\sud$ may be regarded as a group of constant sections spanning $U \times \sud$). It is clear that $\sud$ acts on $\mB_{\mE,U}$ in such a way that the fixed point algebra $\mB_{\mE,U}^{\alpha_U}$ coincides with $\mA_U$. Thus, the argument used for $\mE \simeq X^\rho \times \bC^d$ implies that $\mA'_U \cap \mB_{\mE,U}$ is generated as a \sC algebra by $\mA'_U \cap \mA_U$ and $\mB'_{\mE,U} \cap \mB_{\mE,U}$; by (\ref{eq_in}), we conclude
\[
\mA'_U \cap \mB_{\mE,U} \subseteq \mC_\mE \vee \mZ \ \ .
\]
We now pick a finite open cover $\left\{ U_k \right\}$ trivializing $\mE$ with a subordinate partition of unity $\left\{ \lambda_k \right\} \subset$ $\zro$. If $b \in \mA' \cap \mB_\mE$, then 
\[
b = \sum_l \lambda_k b 
\ \ , \ \ 
\lambda_k b \in \mA'_{U_k} \cap \mB_{\mE,U_k}
\ \subseteq \ 
\mC_\mE \vee \mZ
\ \ .
\]
Point 2: let $t \in (\iota , \rho^r)_\eps$, $\varphi \in j(\iota , \mE^r)$. Then (\ref{eq_erssrs}) implies
\[
t^* \varphi a = t^* \rho^r(a) \varphi = a t^* \varphi \ \ , \ \ a \in \mA \ ;
\]
moreover, (\ref{eq_jers}) implies that, for every $\psi \in j(\wE)$,
\[
\psi t^* \varphi = 
\sigma (t^* \varphi) \psi = 
\rho^r (t)^* \sigma^r(\varphi) \psi =
t^* \eps (1,r) \eps (r,1) \varphi \psi =
t^* \varphi \psi
\ .
\]
We conclude that $t^* \varphi$ commutes with $\mA$ and $j(\wE)$, so that $t^* \varphi \in \mC_\mE$.
To prove that the set $\left\{ t^* \varphi \right\}$ is total in $\mC_\mE$, we proceed as in \cite[Lemma 5(1)]{DR88}, \cite[Proposition 7.1]{Vas02g}. As a first step, we assume that $\mE$ is trivial, in such a way that we have a strongly continuous action $\alpha : \sud \to {\bf aut}_{X^\rho} \mC_\mE$ such that $\mC_\mE^\alpha =$ $\zro$. By Fourier analysis, the set of elements of $\mC_\mE$ that transform like vectors in irreducible representations of $\sud$ is dense in $\mC_\mE$. Thus, we consider $n$-ples $\left\{ T_i \right\} \subset$ $\mC_\mE$ such that 
\[
\alpha_y (T_i) = \sum_j T_j u_{ji} (y) \ \ , \ \ y \in \sud \ ,
\]
where $u$ is some irreducible representation of $\sud$. Since $u$ is a subrepresentation of some tensor power of the defining representation of $\sud$, we find that there is $r \in \bN$ and constant orthonormal sections $\left\{ \varphi_i \right\}$ of $\mE^r \simeq$ $X^\rho \times \bC^{d^r}$ transforming like the $T_i$'s; moreover, we may regard the $\varphi_i$'s as elements of $j ( \iota , \mE^r )$ fulfilling the relations $\varphi_i^* \varphi_j = \delta_{ij} 1$. In this way, we find
\[
W^* := \sum_i T_i \varphi_i^* \in \mA \ \ .
\]
Multiplying on the right by $\varphi_j$, we conclude $T_j = W^* \varphi_i$; moreover, it is clear that $W \in$ $( \iota , \rho^r )$, and
\[
\rho (W) = 
\sum_i \sigma (\varphi_i) T_i^* = 
\sum_i \eps (r,1) \varphi_i T_i^* =
\eps (r,1) W
\ \ ,
\]
i.e., $W \in ( \iota , \rho^r )_\eps$. This proves Point 2 for trivial vector bundles. In the general case, the same argument used for the proof of Point 1 shows that $\left\{ t^* \varphi \right\}$ is total in $\mC_\mE$.
\end{proof}

The following Theorem generalizes several results, namely \cite[Theorem 4.1]{DR89A} for $\mZ = \bC 1$, \cite[Theorem 7.2]{Vas02g} for $\rho$ symmetric and $c_1(\rho) = 0$, and \cite[Theorem 4.13]{BL04} (for a single endomorphism) in the case in which each $(\rhors)_\eps$, $r,s \in \bN$, is free as a Banach $\zro$-bimodule.

\begin{thm}
\label{thm_dual}
Let $( \rho , \eps , \mR ) \in$ ${\bf end} \mA$ be a quasi-special endomorphism with class $d \oplus c_1(\rho)$, $d \in \bN$, $c_1(\rho) \in$ $H^2(X^\rho,\bZ)$. For every rank $d$ vector bundle $\mE \to X^\rho$ with first Chern class $c_1 (\rho)$, the following 
are equivalent:
\begin{enumerate}
\item there is a section $s : X^\rho \to \Omega_\mE$, $q \circ s = id_{X^\rho}$ (recall (\ref{def_ome_0}));
\item there is a compact group bundle $\mG \subseteq \mcSUE$, and a dual action $\nu : \cog \to \mA$ such that $\nu (\ers)_\mG =$ $(\rhors)_\eps$, $r,s \in \bN$, and $\mA' \cap (\mA \rtimes_\nu \wa \mG) =$ $\mZ$. Moreover, $\mA$ is the fixed-point algebra with respect to the gauge $\mG$-action on $\mA \rtimes_\nu \wa \mG$.
\end{enumerate}
\end{thm}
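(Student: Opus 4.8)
The plan is to prove the two implications separately, beginning with $(2)\Rightarrow(1)$, which is the more direct one. Assume we are given the compact group bundle $\mG\subseteq\mcSUE$ and the dual action $\nu:\cog\to\mA$ with $\nu(\ers)_\mG=(\rhors)_\eps$ and $\mA'\cap(\mA\rtimes_\nu\wa\mG)=\mZ$. Since $\nu$ realizes the same weakly special datum $(\rho,\eps,\mR)$ (so $\nu(\theta)=\eps$ and, as $\mO_\mcSUE$ is generated by $\theta$ and $\mR$, $\nu$ restricts to $\mu$ on $\mO_\mcSUE$), I would apply \lemref{lem_funct} with $\mG'=\mcSUE$ to obtain a $\zro$-epimorphism $\eta:\mB_\mE\to\mA\rtimes_\nu\wa\mG$ that is the identity on $\mA$ and intertwines the canonical endomorphisms. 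The key observation is that every central $c\in\mC_\mE$ satisfies $\sigma(c)=\sum_l\psi_l c\psi_l^*=c\sum_l\psi_l\psi_l^*=c$, so $\eta(\mC_\mE)$ is invariant under the endomorphism induced on $\mA\rtimes_\nu\wa\mG$; moreover, by surjectivity of $\eta$ and centrality of $\mC_\mE$, $\eta(\mC_\mE)$ is central, hence $\eta(\mC_\mE)\subseteq\mA'\cap(\mA\rtimes_\nu\wa\mG)=\mZ$. Invariance together with $\mZ^\rho=\zro$ then forces $\eta(\mC_\mE)\subseteq\zro$, so that $\eta|_{\mC_\mE}:\mC_\mE\to\zro$ is a unital $\zro$-morphism, which by Gel'fand duality is precisely a continuous section $s:X^\rho\to\Omega_\mE$ of $q$ (recall (\ref{def_ome_0})).

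For $(1)\Rightarrow(2)$, a section $s$ is the same as a unital $\zro$-morphism $\chi_s:\mC_\mE\to\zro$. I would first define the gauge group bundle as the stabilizer of $s$ under the right action (\ref{def_aomega}),
\[
\mG:=\{\,y\in\mcSUE:\alpha^*(s(p(y)),y)=s(p(y))\,\},
\]
and use the local sections of $\mcSUE\to\mG\backslash\mcSUE$ from \lemref{lem_sue}, together with the fibre description $\Omega_x\simeq G_x\backslash\sud$ of Remark~\ref{rem_c}, to check that $\mG$ is a compact group bundle whose fibres are the stabilizers of the $s(x)$. Next I would form $\mD_s:=\mB_\mE/\mI_s$, where $\mI_s$ is the closed ideal generated by $\{c-\chi_s(c):c\in\mC_\mE\}$, with quotient map $\pi_s$, and set $\nu(t):=\pi_s(j(t))$ for $t\in\cog$. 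The core step is to identify $\mD_s$ with $\mA\rtimes_\nu\wa\mG$ by verifying the universal properties (\ref{30}) relative to $\mG$; this is done fibrewise, since over a trivializing chart $\mcSUE|_U\simeq U\times\sud$ one has a genuine $\sud$-action with fixed-point algebra $\mA_U$, and quotienting $\mC_{\mE,U}=C(G_x\backslash\sud)$ by evaluation at the coset $s(x)$ reduces to the compact-group theory of \cite{Vas02g} used in the proof of \lemref{lem_cp_str}, after which one glues by a partition of unity in $\zro$.

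With $\nu$ and $\mG$ in place, the three remaining assertions of (2) follow. Injectivity of $\pi_s|_\mA$, checked fibrewise via the invariant mean $m_\varphi:\mB_\mE\to\mA$ of Remark~\ref{rem_afp}, gives $\mA\hookrightarrow\mD_s$ and makes $\nu$ an $\mA$-valued $\zro$-monomorphism, hence a genuine dual $\mG$-action. Since $\mA'\cap\mB_\mE=\mC_\mE\vee\mZ$ by \lemref{lem_cp_str}(1) and $\pi_s$ collapses $\mC_\mE$ onto $\zro\subseteq\mZ$, one obtains $\mA'\cap\mD_s=\pi_s(\mZ)=\mZ$. An invariant mean for the compact bundle $\mG$ then identifies $\mA$ as the fixed-point algebra, and, crucially, the intertwiner identity $\nu(\ers)_\mG=(\rhors)_\eps$ is not proved by hand but read off from \lemref{lem_da_dual}, whose hypothesis $\mA'\cap\mD_s=\mZ$ has just been verified; this closes the construction and, together with $\eta|_{\mC_\mE}=\chi_s$, matches the correspondence of the first implication.

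The main obstacle I anticipate is the fibrewise identification $\mD_s\simeq\mA\rtimes_\nu\wa\mG$ and its globalization: one must show simultaneously that $\ker\eta=\mI_s$, so that the surjection of \lemref{lem_funct} is exactly the quotient by the character ideal, and that the fibrewise stabilizers assemble into a continuous (indeed locally trivial) group bundle carrying a dual action that varies continuously in $x$. Everything else is either formal, as in the centre computation of $(2)\Rightarrow(1)$, or reducible to results already at hand: the structure of $\mC_\mE$ from \lemref{lem_cp_str}, the point-fibre Doplicher--Roberts theory of \cite{Vas02g}, and the automatic intertwiner isomorphism of \lemref{lem_da_dual}.
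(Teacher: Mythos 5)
Your proposal follows essentially the same route as the paper: the direction $(2)\Rightarrow(1)$ is the paper's argument almost verbatim (restrict $\nu$ to $\mO_{\mcSUE}$, invoke \lemref{lem_funct} to get the epimorphism $\eta:\mB_\mE\to\mA\rtimes_\nu\wa\mG$, observe that the minimality condition forces the centre of the quotient to be $\zro$, and read off the section by Gel'fand duality), and in $(1)\Rightarrow(2)$ you define the same stabilizer bundle $\mG$ and the same quotient, since the ideal generated by $\{c-\chi_s(c):c\in\mC_\mE\}$ is exactly $C_0(\Omega_\mE - s(X^\rho))\mB_\mE$.

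The one substantive difference is that the paper does not argue $(1)\Rightarrow(2)$ fibrewise by hand: it verifies, via \lemref{lem_sue} and \lemref{lem_cp_str}, the hypotheses of \cite[Theorem 6.1]{Vas02g} and imports from that theorem the three hard conclusions at once --- injectivity of $\eta$ on $\mA$, $\eta(\mA)=\mF^\beta$, and $\eta(\mA)'\cap\mF=\eta(\mZ)$. In your sketch the last of these is asserted too quickly: from \lemref{lem_cp_str}(1) you get $\pi_s(\mA'\cap\mB_\mE)=\pi_s(\mC_\mE\vee\mZ)=\mZ$, but this only shows $\mZ\subseteq\mA'\cap\mD_s$; the reverse inclusion requires showing that every element of the relative commutant of the quotient lifts, modulo $\ker\pi_s$, to an element of $\mA'\cap\mB_\mE$, and relative commutants do not in general pass to quotients. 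This is precisely the non-formal content of the cited theorem, so either quote it (as the paper does) or supply the lifting argument explicitly; the rest of your outline, including the use of \lemref{lem_da_dual} for the identity $\nu(\ers)_\mG=(\rhors)_\eps$, matches the paper.
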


\begin{proof}
{\bf (1) $\Rightarrow$ (2)}: By Lemma \ref{lem_sue}, Lemma \ref{lem_cp_str}, and from the fact that there is $s \in S_{X^\rho} (\Omega_\mE)$, we find that the triple $( \mB_\mE , \mcSUE , \alpha )$ fulfilles the properties required for the proof of \cite[Theorem 6.1]{Vas02g}. Thus, there is a group bundle $\mG \subseteq \mcSUE$ and a dynamical system $( \mF , \mG , \beta )$, endowed with a $\zro$-epimorphism $\eta : \mB_\mE \to \mF$ which is injective on $\mA$, and such that $\eta (\mA) =$ $\mF^\beta$, $\eta (\mA)' \cap \mF =$ $\eta (\mZ)$. We recall that $\eta$ is defined as the quotient 
\begin{equation}
\label{def_eta}
\mB_\mE \ \to \ \mB_\mE \ / \ \left( C_0 ( \Omega_\mE - s(X^\rho) ) \mB_\mE \right)
\end{equation}
(note that $C_0 ( \Omega_\mE - s(X^\rho) )$ is an ideal of $\mC_\mE$), whilst
\begin{equation}
\label{def_g}
\mG :=
\left\{
y \in \mcSUE : \alpha^* ( s(x) , y ) = s (x)
\ , \ x := p(y)
\right\} \ .
\end{equation}
To economize in notations, we define
\[
\mA_\eta := \eta (\mA)
\ \ , \ \ 
\mZ_\eta := \eta (\mZ)
\ \ , \ \
\rho_\eta := \eta \circ \rho \circ \eta^{-1} |_{\eta(\mA)}  \in {\bf end} \mA_\eta
\ ,
\]
in such a way that we have a $\zro$-isomorphism $\eta |_\mA : (\mA , \rho) \to$ $(\mA_\eta , \rho_\eta)$. By definition of $\eta$, $\mG$, we obtain 
\begin{equation}
\label{eq_g_eta}
\wa \eta \circ \alpha (y,w) = \beta ( y , \wa \eta (w)  )
\ \ , \ \
y \in \mG \ , \ w \in \wa \mB_\mE
\end{equation}
(see \cite[\S 6.1]{Vas02g}). Now, since $\eta$ is a $\zro$-morphism, we find that $\eta$ restricts to a unitary isomorphism from $j(\wE)$ onto $\eta \circ j (\wE)$ (in fact if $\psi , \psi' \in \wE$ and $f :=$ $\left \langle \psi , \psi' \right \rangle$, then $f =$ $\eta( j(\psi)^* j(\psi') ) =$ $j(\psi)^* j(\psi')$). This fact has two consequences: first, by universality of the Cuntz-Pimsner algebra there is a $C(X)$-monomorphism $j_\eta : \coe \hra$ $\mF$, $j_\eta :=$ $\eta \circ j$; moreover, since $\zro$ is contained in the centre of $\mF$, if $\left\{ \psi_l \right\}$ is a finite set of generators for $j_\eta(\wE)$ then we can define
$\sigma_\eta \in$${\bf end}_{X^\rho} \mF$,
$\sigma_\eta (t) :=$ $\sum_l \psi_l t \psi_l^*$,
in such a way that $\sigma_\eta \circ \eta =$ $\eta \circ \sigma$, $\sigma_\eta \circ j_\eta =$ $j_\eta \circ \sigma_\mE$. This also implies $\sigma_\eta |_{\mA_\eta} = \rho_\eta$.
Applying (\ref{def_ada}) to $\mB_\mE$, and using (\ref{eq_g_eta}), we find that $j_\eta$ is $\mG$-equivariant, i.e.
\[
\beta ( y , \wa j_\eta (\xi) ) = \wa j_\eta \circ \wa y (\xi)
\ \ , \ \  
y \in \mG , \xi \in \wa \mO_\mE 
\ \ .
\]
Thus, $j_\eta$ restricts to a $\zro$-monomorphism $\nu : ( \mO_\mG , \sigma_\mG ) \to$ $( \mA_\eta , \rho_\eta )$. If $t \in (\sgrs)$, then 
\[
\nu (t) \rho_\eta^r (a) = 
\eta ( j(t) \rho^r(a') ) =
\eta ( j(t) \sigma^r(a') ) =
\eta ( \sigma^s (a') j(t) ) =
\rho_\eta^s (a) \nu (t) 
\ ,
\]
where $a' \in \mA$, $a := \eta (a') \in$ $\mA_\eta$; this implies $\nu ( \ers )_\mG \subseteq$ $( \rho_\eta^r , \rho_\eta^s )$, $r,s \in \bN$. Thus, $\nu$ is a dual action, and $\mF$ fulfilles the universal properties
\[
\left\{
\begin{array}{ll}
\nu (t) = j_s (t) \ \ , \ \ t \in \mO_\mG
\\
\psi a = \rho_\eta (a) \psi \ \ , \ \ a \in \mA_\eta , \psi \in j_s(\wE) \ .
\end{array}
\right.
\]
By identifying $\mA$ and $\mA_\eta$, we conclude that $\mF$ is isomorphic to $\mA \rtimes_\nu \wa \mG$. By construction of $\mF$, we also find $\mA' \cap ( \mA \rtimes_\nu \wa \mG ) =$ $\mZ$, and that $\mA$ is the fixed-point algebra with respect to the $\mG$-action on $\mA \rtimes_\nu \wa \mG$. Finally, the fact that $(\rhors)_\eps =$ $\nu (\ers)_\mG$, $r,s \in \bN$, follows from Lemma \ref{lem_da_dual}. 
{\bf (2) $\Rightarrow$ (1)}: applying Lemma \ref{lem_funct}, we find that $\nu$ restricts to the dual action $\mu : \mO_\mcSUE \to \mA$, $\mu (t) :=$ $\nu (t)$, $t \in \mO_\mcSUE \subset$ $\cog$, introduced in Theorem \ref{sue_action}, in fact $\nu (\theta) = \eps$, $\nu (\iota , \lambda \mE) =$ $\mR$. Again by Lemma \ref{lem_funct}, we find that there is a $\zro$-epimorphism $\eta : \mB_\mE \to \mA \rtimes_\nu \wa \mG$. Now, since $\mA' \cap (\mA \rtimes_\nu \wa \mG) = \mZ$, we find that the centre of $\mA \rtimes_\nu \wa \mG$ coincides with $\zro$, in fact $z \in \mZ$ belongs to the centre of $\mA \rtimes_\nu \wa \mG$ if and only if $z \psi =$$\psi z =$ $\rho (z) \psi$, $\psi \in j(\wE)$. Thus, $\eta (\mC_\mE) =$ $\zro$, and $\eta |_{\mC_\mE} : \mC_\mE \to \zro$ is a $\zro$-epimorphism. By Gel'fand equivalence, we conclude that there is a section $s : X^\rho \to \Omega_\mE$.
\end{proof}

\begin{defn}
\label{def_gauge}
The triple $(  \mA \rtimes_\nu \wa \mG , \mG , \beta )$ of Point 2 of Theorem \ref{thm_dual} is called a {\bf Hilbert extension of} $(\mA,\rho)$. In such a case, we say that the pair $( \mE , \mG )$ is a {\bf gauge-equivariant pair associated with} $( \mA , \rho )$; in particular, the group bundle $\mG \to X^\rho$ is called {\bf a gauge group of} $( \mA , \rho )$. 
\end{defn}

To emphasize the dependence of $\mG$ on $s \in S_X(\Omega_\mE)$ we use the notation $\mG \equiv \mG_s$. We now investigate the dependence of the system $( \mA \rtimes_\nu \wa \mG_s , \mG_s , \beta )$ on the section $s$. As a preliminary remark, we note that (\ref{def_aomega}) induces a group action
\begin{equation}
\label{def_act_sec}
\alpha_* : S_{X^\rho}(\Omega_\mE) \times \mSUE \to S_{X^\rho}(\Omega_\mE)
\ , \ 
\alpha_* (s,u) \ (x) :=  \alpha^* ( s(x) , u(x) )
\ \ , \ \
x \in X
\ \ .
\end{equation}

\begin{thm}
\label{thm_uni}
Let $\mG_s$, $\mG_{s'} \subseteq \mcSUE$ be compact group bundles with dual actions $\nu : ( \mO_{\mG_s} , \sigma_{\mG_s} ) \to$ $(\mA,\rho)$, $\nu' : ( \mO_{\mG_{s'}} , \sigma_{\mG_{s'}} ) \to$ $(\mA,\rho)$, such that $\mA' \cap (\mA \rtimes_\nu \wa \mG_s) = \mZ$, $\mA' \cap (\mA \rtimes_{\nu'} \wa \mG_{s'}) = \mZ$. Then the following are equivalent:
\begin{enumerate}
\item There is $u \in \mSUE$ with $s' = \alpha_* (s,u)$;
\item There is an isomorphism $\delta : \mA \rtimes_\nu \wa \mG_s \to$ $\mA \rtimes_{\nu'} \wa \mG_{s'}$ such that $\delta |_\mA = id_\mA$ (in the sense that $\delta$ is an $\mA$-module map), $\delta \circ \sigma_\eta =$ $\sigma_{\eta'} \circ \delta$.
\item $\mG_s$ and $\mG_{s'}$ are conjugates in $\mcSUE$, i.e. there is $u \in \mSUE$ such that $\mG_{s'} = u \mG_s u^*$.
\end{enumerate}
\end{thm}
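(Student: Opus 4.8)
The plan is to establish the cycle $(1)\Rightarrow(2)\Rightarrow(3)\Rightarrow(1)$, the unifying tool being the automorphism attached to a global special unitary. For $u\in\mSUE$, viewed as a section group of $\mcSUE$, the gauge action \eqref{def_asue} produces $\wa u:=\alpha^{\mSUE}_u\in{\bf aut}_{X^\rho}\mB_\mE$, and I would first record three properties. (i) $\wa u|_\mA=id$, since $\mA$ is the fixed-point algebra of the $\mcSUE$-action (Remark \ref{rem_afp}). (ii) $\wa u\circ\sigma=\sigma\circ\wa u$, which follows from \eqref{def_sigmab}, the relation $\wa u(\psi_l)=\sum_m u_{lm}\psi_m$, and unitarity of the matrix $(u_{lm})$. (iii) On the centre $\mC_\mE$ the transpose of $\wa u$ is, by Gel'fand duality and \eqref{def_aomega}--\eqref{def_act_sec}, the homeomorphism $\omega\mapsto\alpha^*(\omega,u(q(\omega)))$ of $\Omega_\mE$; hence $\wa u$ carries $s(X^\rho)$ onto $\alpha_*(s,u)(X^\rho)$.

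For $(1)\Rightarrow(2)$: if $s'=\alpha_*(s,u)$, then by (iii) the automorphism $\wa u$ sends the ideal $C_0(\Omega_\mE-s(X^\rho))\mB_\mE$ onto $C_0(\Omega_\mE-s'(X^\rho))\mB_\mE$, so in view of the quotient description \eqref{def_eta} it descends to an isomorphism $\delta:\mA\rtimes_\nu\wa{\mG_s}\to\mA\rtimes_{\nu'}\wa{\mG_{s'}}$. Properties (i) and (ii) pass to the quotient and yield $\delta|_\mA=id$ and $\delta\circ\sigma_\eta=\sigma_{\eta'}\circ\delta$, which is (2).

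For $(2)\Rightarrow(3)$: given such a $\delta$, I would first show $\delta(\mM_s)=\mM_{s'}$. Indeed $\mM=\{\psi:\psi a=\rho(a)\psi\}$ by Theorem \ref{thm_dual0}(1), and since $\delta$ fixes $\mA$ and intertwines the canonical endomorphisms, $\psi\in\mM_s$ forces $\delta(\psi)\in\mM_{s'}$, and symmetrically. Thus $\delta\circ j_s$ and $j_{s'}$ are two copies of $\coe$ inside $\mA\rtimes_{\nu'}\wa{\mG_{s'}}$, sending $\wE$ into $\mM_{s'}$ with support $1$ and intertwining $\sigma_\mE$ with $\sigma_{\eta'}$; they therefore differ by a gauge unitary $u=\sum_l\delta(j_s(\psi_l))\,j_{s'}(\psi_l)^*$. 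Because $\delta$ fixes $\mA$, and in particular the determinant module $\mR\subset(\iota,\rho^d)_\eps$, one checks that $u$ is forced into the special-unitary gauge group; once $u$ is identified with an element of $\mSUE$, Lemma \ref{lem_str_og} gives $\mG_{s'}=u\mG_s u^*$.

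For $(3)\Rightarrow(1)$: given $u\in\mSUE$ with $\mG_{s'}=u\mG_s u^*$, replacing $s$ by $\alpha_*(s,u^*)$ reduces to the case $\mG_s=\mG_{s'}=:\mG$, and it remains to find $v\in\mSUE$ with $\alpha_*(s,v)=s'$. Fibrewise this is possible: on $\Omega_x\simeq G_x\backslash\sud$ (Remark \ref{rem_c}) the $\sud$-action is transitive, and two points with the same stabiliser lie in one orbit under its normaliser, so $s'(x)=\alpha^*(s(x),v(x))$ for some $v(x)$. The candidates form a principal $\mG$-bundle over $X^\rho$, and the genuine difficulty -- the step I expect to be the main obstacle -- is to trivialise it, i.e. to upgrade this pointwise/local data to a single continuous $v\in\mSUE$. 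Here I would use the local sections of $p_\mG:\mcSUE\to\mG\backslash\mcSUE$ furnished by Lemma \ref{lem_sue}, patch them over a cover trivialising $\mE$, and control the transition data; equivalently, one shows that the residual $N(\mG)/\mG$-valued obstruction vanishes, where the standing assumption $\mG=\mSG$ plays its role. The same globalisation problem -- passing from central ($\mZ$-valued) or local data to a genuine scalar, special-unitary, global section -- is, in my view, the real content underlying all three implications.
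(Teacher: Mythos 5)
Your arguments for $(1)\Rightarrow(2)$ and $(2)\Rightarrow(3)$ follow the paper's route. For $(1)\Rightarrow(2)$ the paper likewise passes $\alpha_u$ through the quotients (\ref{def_eta}) after checking $\alpha_u(\ker\eta)=\ker\eta'$. For $(2)\Rightarrow(3)$ one point should be made precise: you only establish $\delta(j_s(\psi_l))\in\mM_{s'}\simeq\wE\otimes\mZ$, which would give your $u=\sum_l\delta(j_s(\psi_l))j_{s'}(\psi_l)^*$ matrix coefficients in $\mZ$ rather than in $\zro$, and such a $u$ need not define a section of $\mcUE$. The paper instead uses $\delta\circ\sigma_\eta=\sigma_{\eta'}\circ\delta$ together with the identity $(\iota,\sigma_{\eta'})=j_{s'}(\wE)$ --- which is where the hypothesis $\mA'\cap(\mA\rtimes_{\nu'}\wa\mG_{s'})=\mZ$ (hence centre $=\zro$) is really used --- to land $\delta(j_s(\wE))$ inside $j_{s'}(\wE)$ on the nose; the determinant argument via $\mR$ then proceeds as you describe.

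The genuine gap is in $(3)\Rightarrow(1)$. After reducing to $\mG_s=\mG_{s'}=:\mG$ you discard the dual actions entirely and try to prove the purely topological statement that two sections of $\Omega_\mE$ with the same stabiliser bundle lie in one $\mSUE$-orbit, deferring the ``$N(\mG)/\mG$-valued obstruction'' to an unproved claim. That statement is false in general: take $\mE$ the trivial rank $2$ bundle over $X=S^1$ and $\mG=X\times\bZ_2$ with $\bZ_2$ the centre of $\mathbb{SU}(2)$, so the relevant fibre of $\Omega_\mE$ is $\bZ_2\backslash\mathbb{SU}(2)\simeq\bSO(3)$; a constant section and a section tracing a non-nullhomotopic loop in $\bSO(3)$ have identical stabiliser bundles, yet no $v\in C(S^1,\mathbb{SU}(2))$ carries one to the other, since such a $v$ would lift the loop along the covering $\mathbb{SU}(2)\to\bSO(3)$. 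So the obstruction you hope to kill by patching local sections of $p_\mG$ does not vanish for topological reasons, and the assumption $\mG=\mSG$ does not help (here $\mSG=\mG$). Whatever makes $(3)\Rightarrow(1)$ true must come from the hypothesis that both sections arise from dual actions into the \emph{same} $(\mA,\rho)$, and this is exactly how the paper argues: Lemma \ref{lem_str_og} turns the conjugacy $\mG_{s'}=u\mG_su^*$ into an isomorphism $\mO_{\mG_{s'}}\to\mO_{\mG_s}$ implemented by $\wa u\in{\bf aut}_{X^\rho}\coe$; the universal property (\ref{30}) then produces an isomorphism $\beta_u$ of the two crossed products fitting into a square with $\alpha_u$, $\eta$, $\eta'$; and equating $\mC_\mE\cap\ker(\eta\circ\alpha_u)=C_0(\Omega_\mE-\alpha_*(s,u^*)(X^\rho))$ with $\mC_\mE\cap\ker\eta'=C_0(\Omega_\mE-s'(X^\rho))$ forces $s'=\alpha_*(s,u^*)$. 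No globalisation or patching problem ever arises, because the global unitary is the one already supplied by statement (3); your closing remark that such a globalisation is ``the real content underlying all three implications'' is therefore a misdiagnosis of where the work lies.
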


\begin{proof}
{\bf (1) $\Rightarrow$ (2)}: We retain the notation introduced in the proof of Theorem \ref{thm_dual}. Since $s' = \alpha_* (s,u)$, we find $\alpha_u ( C_0 ( \Omega_\mE - s (X^\rho) ) ) =$ $C_0 ( \Omega_\mE - s'(X^\rho) )$. This implies that if we consider the $\zro$-epimorphisms $\eta : \mB_\mE \to \mA \rtimes_\nu \wa \mG_s$, $\eta' : \mB_\mE \to \mA \rtimes_{\nu'} \wa \mG_{s'}$ (see (\ref{def_eta})), then we obtain $\alpha_u( \ker \eta) =$ $\ker \eta'$. This last equality allows one to define
\[
\delta : \mA \rtimes_\nu \wa \mG_s \to \mA \rtimes_{\nu'} \wa \mG_{s'}
\ \ , \ \
\delta \circ \eta (b) := \eta' \circ \alpha_u (b) 
\ , \ 
b \in \mB_\mE
\ .
\]
Since $\eta$, $\eta'$ are faithful on $\mA$, we conclude that $\delta$ is faithful on $\mA$. For the same reason, $\delta$ is faithful on $j_s (\coe) \subset$ $\mA \rtimes_\nu \wa \mG_s$. Finally, since $\sigma_\eta \circ \eta =$ $\eta \circ \sigma$, $\sigma_{\eta'} \circ \eta' =$ $\eta' \circ \sigma$, we conclude that $\delta \circ \sigma_\eta = \delta \circ \sigma_{\eta'}$.
{\bf (2) $\Rightarrow$ (3)}: As a preliminary remark, we note that the minimality condition $\mA' \cap ( \mA \rtimes_\nu \wa \mG_s ) =$ $\mZ$ implies that the centre of $\mA \rtimes_\nu \wa \mG_s $ is $\zro$. By construction of $\sigma$, it is clear that $j_s (\wE) \subseteq$ $( \iota , \sigma )$ ; on the other side, if $\psi \in ( \iota , \sigma )$, then $c_l := \psi_l^* \psi \in ( \ii ) =$ $\zro$, and this implies that $\psi = \sum_l \psi_l c_l \in$ $j_s(\wE)$. Thus, $( \iota , \sigma ) = j_s(\wE)$; since $\delta (\iota , \sigma) =$ $( \iota , \sigma' )$, we conclude that $\delta$ defines a unitary $\zro$-module operator $u : \wE \to \wE$, which extends to a $\zro$-automorphism $\wa u \in {\bf aut}_{X^\rho} \coe$ fulfilling 
\begin{equation}
\label{eq_bj}
\delta \circ j_s = j_{s'} \circ \wa u
\ \ .
\end{equation}
Since $\delta (R) = R$, $R \in \mR \subset \mA$, and since $\mR =$ $j_s( \iota , \lambda \mE ) =$ $j_{s'} ( \iota , \lambda \mE )$, we conclude that $\wa u$ restricts to the identity on elements of $( \iota , \lambda \mE )$. In other terms, $u \in \mSUE$. Now, $j_s |_{\cog} = \nu$, $j_{s'} |_{\mO_{\mG_{s'}}} = \nu'$; thus, applying (\ref{eq_bj}), and by using the fact that
\[
\delta |_\mA = id_\mA 
\ \ \Rightarrow \ \
\delta \circ \nu = \nu
\ ,
\]
we find 
\[
\nu = \nu' \circ \wa u
\ \ .
\]
By Point 2 of Theorem \ref{thm_dual} $\nu : \mO_{\mG_s} \to \soro$ and $\nu' : \mO_{\mG_{s'}} \to \soro$ are isomorphisms, thus we have that $\wa u \in {\bf aut}_{X^\rho} \coe$ restricts to a $\zro$-isomorphism from $\mO_{\mG_s}$ to $\mO_{\mG_{s'}}$. From Lemma \ref{lem_str_og} we conclude that $\mG_s$ and $\mG_{s'}$ are conjugates.
{\bf (3) $\Rightarrow$ (1)}: By Lemma \ref{lem_str_og}, we have that $\wa u \in$ ${\bf aut}_{X^\rho} \coe$ restricts to an isomorphism from $\mO_{\mG_{s'}}$ onto $\mO_{\mG_s}$. Moreover, by (\ref{def_asue}) there is an automorphism $\alpha_u \in$ ${\bf aut}_{X^\rho} \mB_\mE$ such that $\alpha_u \circ j =$ $j \circ \wa u$, where $j$ is defined by (\ref{def_j}).
Let us now define
\[
\left\{
\begin{array}{ll}
j \ ' := j_s \circ \wa u \ : \ \coe \to \mA \rtimes_{\nu_s} \wa \mG_s
\\
\nu' := \nu \circ \wa u \ : \ \mO_{\mG_{s'}} \to \mA \rtimes_{\nu_s} \wa \mG_s \ .
\end{array}
\right.
\]
Since $j \ ' (t) = \nu' (t)$, $t \in \mO_{\mG_{s'}}$, and since $\psi a =$ $\rho (a) \psi$, $a \in \mA$, $\psi \in j \ '(\wE)$, we conclude that $\mA \rtimes_{\nu_s} \wa \mG_s$ fulfilles the universal properties (\ref{30}) for the crossed product $\mA \rtimes_{\nu_{s'}} \wa \mG_{s'}$. Thus, by universality there is an isomorphism
\[
\beta_u : \mA \rtimes_{\nu_{s'}} \wa \mG_{s'}
          \to
          \mA \rtimes_{\nu_s} \wa \mG_s 
\ \ , \ \
\beta_u \circ j_{s'} (t) = j \ ' (t)
\ ,
\]
$t \in \coe$. In other terms, we have $\beta_u \circ j_{s'} (t) = j_s \circ \wa u (t)$. In this way, we obtain a commutative diagram
\[
\xymatrix{
                    \mB_\mE
		    \ar[d]_-{\eta}
		    \ar[r]^-{\alpha_u}
		 &  \mB_\mE
		    \ar[d]^-{\eta'}
		 \\ \mA \rtimes_{\nu_{s'}} \mG_{s'}
		    \ar[r]^-{\beta_u}
		 &  \mA \rtimes_\nu \mG_s
}
\]
where $\eta, \eta'$ are defined as in (\ref{def_eta}). From the above diagram, it is evident that $\ker ( \eta \circ \alpha_u ) =$ $\ker ( \beta_u \circ \eta' ) =$ $\ker \eta'$. In particular, $c \in \mC_\mE \cap \ker ( \eta \circ \alpha_u )$ if and only if $c \in \mC_\mE \cap \ker \eta'$. But by construction of $\eta , \eta'$, we have 
\[
\left\{
\begin{array}{ll}
\mC_\mE \cap \ker ( \eta \circ \alpha_u ) = 
C_0 ( \Omega_\mE - \alpha_* ( s , u^* ) (X^\rho) )
\\
\mC_\mE \cap \ker \eta' = C_0 ( \Omega_\mE - s' (X^\rho) )
\end{array}
\right.
\]
From the above equalities, we conclude that $s'(X^\rho) =$ $\alpha_* ( s , u^* ) (X^\rho)$; so that $s' = \alpha_* ( s , u^* )$, and the Theorem is proved (up to a rescaling $u^* \mapsto u$).
\end{proof}

\begin{defn}
\label{def_he}
Let $( \rho , d , \mR )$ be a quasi-special endomorphism. Hilbert extensions $( \mA \rtimes_\nu \wa \mG_s , \beta , \mG_s )$, $( \mA \rtimes_\nu \wa \mG_s , \beta , \mG_{s'} )$ of $( \mA , \rho )$ are said to be {\bf equivalent} when Point 2 of the previous theorem is fulfilled.
\end{defn}

\begin{ex}
{\it
Let $\mE \to X$ be a vector bundle, $\mG \subseteq \mcSUE$ a group bundle. Then the canonical endomorphism $\sigma_\mG$ is special, and $\coe \simeq \cog \rtimes_\mu \wa \mG$, where $\mu$ is the identity of $\mO_\mG$.
}
\end{ex}

\begin{ex}
{\it
The superselection structures considered in \cite{BL97,BL04} define endomorphisms with permutation quasi-symmetry. In particular, the endomorphisms $\rho \in {\bf end} \mA$ considered in \cite[\S 7]{Vas02g} are quasi-special, and have trivial Chern class $c_1(\rho)$. Thus, we may pick a trivial vector bundle $\mE :=$ $X^\rho \times \bC^d$, and construct the crossed product $\mB_\mE$ by the dual action of $X^\rho \times \sud$. It is proved in \cite[Lemma 7.3]{Vas02g} that the centre of $\mB_\mE$ is isomorphic to $\zro \otimes C(G \backslash \sud)$, where $G \subseteq \sud$ is a compact group unique up to conjugation in $\sud$; thus, it is clear that the spectrum $\Omega_\mE =$ $X^\rho \times G \backslash \sud$ admits sections, and we can construct the crossed product $\mA \rtimes_\nu \wa \mG$, where $\mG =$ $X^\rho \times G$. 
Now, we may consider as well a rank $d$ vector bundle $\mE' \to X^\rho$ with trivial first Chern class, and construct the crossed product $\mB_{\mE'}$ by the dual $\mcSUE'$-action: as shown in \cite[\S 6.2]{Vas02g}, it is not ensured that the resulting gauge group $\mG' \to X^\rho$ is isomorphic to $\mG$.
We conclude that unicity of the gauge group of $( \mA , \rho )$ is ensured when we restrict ourselves to consider trivial group bundles acting on trivial vector bundles. In this way, we obtain the "algebraic Hilbert spaces" considered in the above-cited references.
}
\end{ex}

\subsection{The moduli space of Hilbert extensions.}

Lacking of existence and unicity of the Hilbert extension has been already discussed in the case in which the intertwiners spaces are locally trivial as continuous fields of Banach spaces by means of cohomological methods (\cite{Vas06p}). 
In the present section we present some immediate consequences of Theorem \ref{thm_uni}: this will allows us to give a classification of the Hilbert extensions of a quasi-special \sC dynamical system by means of the space of sections of a bundle, without any assumption on the structure of the intertwiner spaces.

Let $( \rho , d , \mR ) \in$ ${\bf end} \mA$ be a weakly special endomorphism. To simplify the exposition we assume that $X^\rho$ is connected (otherwise, it is possible to decompose $( \mA , \rho )$ into a direct sum indexed by the clopens of $X^\rho$). We denote the set of isomorphism classes of vector bundles $\mE \to X^\rho$ with rank $d$ and first Chern class $c_1(\rho)$ by
$\mcE (\rho)$,
and define the Abelian \sC algebra
\[
\mC_\rho := \bigoplus_{[\mE] \in \mcE (\rho)} \mC_\mE \ .
\]
In the previous definition we considered the \sC algebras $\mC_\mE$, thus for each $[\mE] \in \mcE (\rho)$ we made a choice of $\mE$ in its class $[\mE] \in \mcE (\rho)$. If $\mE$ and $\mE'$ are isomorphic (i.e., $[\mE] = [\mE'] \in \mcE (\rho)$), then $\mC_\mE$ is isomorphic to $\mC_{\mE'}$, thus the isomorphism class of $\mC_\rho$ does not depend on the choice.

Let us now introduce the compact group bundle:
\[
\mathcal{SUE} (\rho) \ := \ \prod_{\mE \in \mcE (\rho)} \mcSUE \ \to \ X^\rho \ ;
\] 
since each component $\mcSUE$, $\mE \in \mcE (\rho)$, of $\mathcal{SUE} (\rho)$ is full, we conclude that the set of sections ${\bf{SU}}\mcE (\rho)$ is a section group for $\mathcal{SUE} (\rho)$.

\begin{lem}
$\mC_\rho$ is a continuous bundle of Abelian \sC algebras with base space $X^\rho$. Moreover, there is a gauge action
$\alpha :$ $\mathcal{SUE} (\rho) \times_{X^\rho} \wa \mC_\rho \to$ $\wa \mC_\rho$.
\end{lem}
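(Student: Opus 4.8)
The plan is to prove the two assertions separately but by the same underlying principle: both the continuity of the bundle $\mC_\rho$ and the existence of the gauge action reduce to the corresponding facts for each summand $\mC_\mE$, which were already established (Remark \ref{rem_c} and the construction of (\ref{def_asue})), and then to check that the direct sum respects these structures fibrewise over the common base $X^\rho$.

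For the first assertion, the key observation is that a finite (or suitably controlled) direct sum of continuous \sC bundles over the same base is again a continuous bundle. Each $\mC_\mE$ is, by Remark \ref{rem_c}, a continuous bundle of Abelian \sC algebras over $X^\rho$ with fibre $\mC_{\mE,x} \simeq C(G_x \backslash \sud)$; since every $\mC_\mE$ contains $\zro$ in its centre and is itself Abelian (being the centre of $\mB_\mE$), the summands share the $C(X^\rho)$-structure. I would form $\wa \mC_\rho := \dot\cup_x \bigoplus_{[\mE]} \mC_{\mE,x}$ with the natural projection, and verify that the norm function $x \mapsto \| \pi_x(c) \|$ is continuous for each $c \in \mC_\rho$: this follows because the norm of an element of a direct sum is the supremum of the norms of its components, and each component norm is continuous by continuity of the bundle $\mC_\mE$. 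Fullness of $\wa \mC_\rho$ follows from fullness of each $\wa \mC_\mE$. The only delicate point is that $\mcE(\rho)$ need not be finite, so I must ensure the direct sum is taken in the \sC-sense (the $c_0$-sum of the family), under which the supremum defining the norm still yields a continuous function on $X^\rho$ once one notes that elements of $\mC_\rho$ have components vanishing in norm outside a compact/summable set.

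For the second assertion, the gauge action is assembled componentwise. Each $\mcSUE$ carries the gauge action (\ref{def_asue}) which, by Remark \ref{rem_c} and Gel'fand duality, restricts to a gauge action on the centre $\mC_\mE$, namely $\alpha : \mcSUE \times_{X^\rho} \wa \mC_\mE \to \wa \mC_\mE$. Since the product bundle $\mathcal{SUE}(\rho) = \prod_{\mE} \mcSUE$ fibres over $X^\rho$ with $G_x$-component acting on the $\mC_{\mE,x}$-summand, I would define the action of $y = (y_\mE)_\mE \in \mathcal{SUE}(\rho)_x$ on $\bigoplus_\mE v_\mE \in \wa\mC_{\rho,x}$ by $\bigoplus_\mE \alpha^x_{y_\mE}(v_\mE)$, i.e. letting each factor $\mcSUE$ act on its own summand and trivially on the others. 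That each $\alpha^x_{y_\mE}$ is a \sC automorphism of the fibre makes the diagonal map a fibrewise automorphism of $\wa\mC_\rho$; continuity of the global map $\mathcal{SUE}(\rho) \times_{X^\rho} \wa\mC_\rho \to \wa\mC_\rho$ follows from the continuity of each component action together with the continuity of the bundle structures established in the first part.

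The main obstacle I anticipate is purely the bookkeeping forced by $\mcE(\rho)$ possibly being infinite: one must be careful that the direct sum, the section space, and the gauge action are all defined in a way compatible with the $C(X^\rho)$-module structure and that continuity is not lost in passing to the infinite sum. Once the correct ($c_0$- or restricted) direct sum is fixed, however, both statements follow formally from the single-bundle facts already proved, with no genuinely new analytic content; the verification that the diagonal action is continuous and preserves the fibrewise \sC structure is routine and I would not grind through it in detail.
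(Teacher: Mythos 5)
Your proposal is correct and takes essentially the same route as the paper: the paper likewise observes that $\mC_\rho$ inherits continuity from the summands $\mC_\mE$ (with fibres $\mC_{\rho,x} \simeq \oplus_\mE \mC_{\mE,x}$) and defines the gauge action componentwise, $\alpha(\widetilde u , c) := \oplus_\mE \, \alpha_\mE ( u_\mE , c_\mE )$. Your extra care about the $c_0$-sense of the direct sum when $\mcE(\rho)$ is infinite is a point the paper glosses over entirely, and your uniform-approximation argument for continuity of the norm function is a sound way to handle it.
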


\begin{proof}
Since each $\mC_\mE$, $E \in \mcE (\rho)$, is a continuous bundle of Abelian \sC algebras, we find that $\mC_\rho$ is a continuous bundle of Abelian \sC algebras, with associated \sC bundle $\wa \mC_\rho \to X^\rho$. Note that each fibre $\mC_{\rho,x}$, $x \in X^\rho$, is isomorphic to the direct sum $\oplus_E \mC_{\mE,x}$ of the fibres of the \sC algebras $\mC_\mE$, $\mE \in \mcE(\rho)$. Let us denote the generic element of $\mathcal{SUE} (\rho)$ by $\widetilde u :=$ $\left\{ u_E \in \mcSUE \right\}_{ \mE \in \mcE (\rho) }$; for every $c :=$ $\oplus_E c_E \in$ $\wa \mC_\rho$ (with $c_E \in \wa \mC_\mE$), we define 
\[
\alpha ( \widetilde u , c ) := \oplus_E \ \alpha_E ( u_E , c_E )
\ ,
\]
where $\alpha^E : \mcSUE \times_{X^\rho} \wa \mC_\mE \to \wa \mC_\mE$ is the gauge action defined as in (\ref{def_asue}), restricted to $\mC_\mE$. 
\end{proof}

It follows from the previous Lemma that the spectrum $\Omega_\rho$ of $\mC$ defines a bundle $q : \Omega_\rho \to X^\rho$. By definition of $\mC_\rho$, we may regard $\Omega_\rho$ as the disjoint union $\dot{\cup}_\mE \Omega_\mE$. For every $x \in X^\rho$, the fibre $\Omega_{\rho,x} := q^{-1}(x)$ is homeomorphic to the disjoint union $\dot{\cup}_{|\mcE (\rho)| \ } G_x \backslash \sud$, where $G_x \subseteq \sud$ is the group defined by means of Theorem \ref{sue_action} (see also Remark \ref{rem_c}). 
By Gel'fand duality, there is a gauge action
\begin{equation}
\label{def_act_omro}
\alpha^* : \mathcal{SUE} (\rho) \times_{X^\rho} \Omega_\rho \to \Omega_\rho \ \ .
\end{equation}
Let us consider the space of sections $S_{X^\rho}(\Omega_\rho)$ of $\Omega^\rho$. Since $X^\rho$ is connected, every $s \in S_{X^\rho} (\Omega_\rho)$ has image contained in some connected component of $\Omega_\rho$. Since each $\Omega_\mE$ appears as a clopen in $\Omega_\rho$, we conclude that $s (X^\rho) \subseteq \Omega_\mE$ for some $\mE \in$ $\mcE(\rho)$, so that $s$ is actually a section of $\Omega_\mE$.
Now, the action (\ref{def_act_omro}) induces an action
\[
\alpha_* : {\bf{SU}}\mcE (\rho) \times S_{X^\rho}(\Omega_\rho) \to S_{X^\rho}(\Omega_\rho)
\ \ , \ \
\alpha_*(u,s) (x) := \alpha^* ( u(x) , s(x) ) 
\ \ , \ \ 
x \in X \ .
\]
The previous elementary remarks, and Theorem \ref{thm_uni}, imply the following
\begin{thm}
\label{thm_moduli}
Let $( \rho , d , \mR )$ be a quasi-special endomorphism of a \sC algebra $\mA$. (If $X^\rho$ is connected,) then there is a one-to-one correspondence between the set of Hilbert extensions of $( \mA , \rho )$ and the space of sections $S_{X^\rho} (\Omega_\rho)$, assigning to each $s \in S_{X^\rho} (\Omega_\rho)$ the triple $( \mA \rtimes_\nu \wa \mG_s , \mG_s , \beta )$. Two Hilbert extensions of $( \mA , \rho )$ associated with sections $s,s'$ are equivalent if and only if $s' = \alpha_* ( s , u )$ for some $u \in {\bf{SU}}\mcE (\rho)$.
\end{thm}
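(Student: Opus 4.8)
The plan is to obtain the statement by assembling \thmref{thm_dual} and \thmref{thm_uni} over the index set $\mcE(\rho)$, exploiting the fact that both the spectrum $\Omega_\rho$ and the group bundle $\mathcal{SUE}(\rho)$ decompose along the classes $[\mE] \in \mcE(\rho)$. Once the two previous theorems are in hand, the argument is essentially bookkeeping and requires no new analysis.

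First I would set up the bijection. Since $X^\rho$ is connected and each $\Omega_\mE$ is a clopen of $\Omega_\rho = \dot\cup_\mE \Omega_\mE$, the image of any $s \in S_{X^\rho}(\Omega_\rho)$ lies in a single $\Omega_\mE$, so that
\[
S_{X^\rho}(\Omega_\rho) \;=\; \bigsqcup_{[\mE] \in \mcE(\rho)} S_{X^\rho}(\Omega_\mE) .
\]
For a fixed $\mE$, the implication $(1)\Rightarrow(2)$ of \thmref{thm_dual} assigns to each $s \in S_{X^\rho}(\Omega_\mE)$ the Hilbert extension $(\mA \rtimes_\nu \wa \mG_s , \mG_s , \beta)$, with $\mG_s$ given by (\ref{def_g}); conversely $(2)\Rightarrow(1)$ produces, from any Hilbert extension with $\mG \subseteq \mcSUE$, a section recovered as the Gel'fand dual of the restriction $\eta|_{\mC_\mE} : \mC_\mE \to \zro$ of the epimorphism $\eta$ of (\ref{def_eta}). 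These assignments are mutually inverse by construction, since $\eta$ is the quotient of $\mB_\mE$ by the ideal generated by $C_0(\Omega_\mE - s(X^\rho))$. It then remains to observe that every Hilbert extension arises from a \emph{unique} class $[\mE]\in\mcE(\rho)$: the degree-one intertwiner module $(\iota,\sigma)=j_s(\wE)$, isomorphic to $\wE$ as a Hilbert $\zro$-module (as in the proof of \thmref{thm_uni}), recovers $\mE$ up to isomorphism via Serre--Swan, and $\mE$ has rank $d$ and first Chern class $c_1(\rho)$ by construction. Taking the disjoint union over $\mcE(\rho)$ yields the global bijection.

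Next I would establish the equivalence criterion, the key being that both sides of the asserted ``iff'' respect the decomposition by $[\mE]$. On one hand, the gauge action on $\mC_\rho$ is the fibrewise direct sum $\oplus_\mE$ of the actions on the summands $\mC_\mE$, so the induced action $\alpha_*$ of ${\bf{SU}}\mcE(\rho)$ preserves each clopen $\Omega_\mE$ and acts there only through its $\mE$-component $u_\mE \in \mSUE$; hence $s' = \alpha_*(s,u)$ forces $s,s'$ into the same $S_{X^\rho}(\Omega_\mE)$ and reduces to $s' = \alpha_*(s,u_\mE)$ in the sense of (\ref{def_act_sec}). On the other hand, an equivalence of Hilbert extensions (\defnref{def_he}) is an isomorphism $\delta$ with $\delta|_\mA = id_\mA$ and $\delta\circ\sigma_\eta=\sigma_{\eta'}\circ\delta$; as in the proof of \thmref{thm_uni}, $\delta$ restricts to a unitary $\zro$-module isomorphism $(\iota,\sigma)\to(\iota,\sigma')$, i.e.\ $\wE\simeq\wE'$, whence $\mE\simeq\mE'$ and the two extensions again share the same $[\mE]$. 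Within a fixed $\mE$ the equivalence of the two conditions is precisely the equivalence $(1)\Leftrightarrow(2)$ of \thmref{thm_uni}, with $u\in\mSUE$ the $\mE$-component of an element of ${\bf{SU}}\mcE(\rho)$. This shows that $s,s'$ give equivalent extensions iff $s'=\alpha_*(s,u)$ for some $u\in{\bf{SU}}\mcE(\rho)$.

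The main obstacle, and the only point that is not pure formalism, is verifying that the vector bundle $\mE$ is a genuine invariant of the Hilbert extension, so that the disjoint-union structure is compatible on both sides: one must check that the module $(\iota,\sigma)$ is canonically attached to $(\mA\rtimes_\nu\wa\mG_s,\mG_s,\beta)$ and that an equivalence $\delta$ carries it isometrically onto $(\iota,\sigma')$, so that Serre--Swan returns an isomorphism $\mE\simeq\mE'$ of Hermitian bundles. Granting this --- which is already implicit in the proof of \thmref{thm_uni} --- the remaining verifications (that $\alpha_*$ splits fibrewise, that sections are supported on a single clopen, and that the two assignments of the bijection are mutually inverse) are routine.
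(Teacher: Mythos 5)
Your proposal is correct and follows essentially the same route as the paper, which derives the theorem in one line from the ``previous elementary remarks'' (the clopen decomposition $\Omega_\rho = \dot\cup_\mE \Omega_\mE$ and connectedness of $X^\rho$) together with Theorems \ref{thm_dual} and \ref{thm_uni}. Your explicit check that the class $[\mE]$ is an invariant of the Hilbert extension --- recovered from $(\iota,\sigma) = j_s(\wE)$ via Serre--Swan, and preserved by any equivalence $\delta$ --- is a worthwhile elaboration of a point the paper leaves implicit, but it does not change the argument.
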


Some remarks follow. 

First, the fibration $\Omega_\rho \to X^\rho$ may lack sections, and in this case there are no Hilbert extensions of $(\mA,\rho)$. Moreover, the ${\bf{SU}}\mcE (\rho)$-action on $S_{X^\rho} (\Omega_\rho)$ may be not transitive, and this means that there could be non-equivalent Hilbert extensions of $(\mA,\rho)$. It could be interesting to study the behaviour of Hilbert extensions $\mA \rtimes_{\nu} \wa \mG_{s_n}$ for sequences $\left\{ s_n \right\}$ converging to a given $s \in S_{X^\rho} ( \Omega_\rho )$.

The case with $\mZ = \bC 1$ studied in \cite[\S 4]{DR89A} yields a special endomorphism $\rho$ with $\zro \simeq \bC$, so that $\mcE (\rho)$ has as unique element the Hilbert space $\bC^d$, $d \in \bN$, and $\Omega_\rho$ reduces to a homogeneous space; this means that there is a unique (up to equivalence) Hilbert extension. More generally, if $\mZ \neq \bC 1$, a canonical endomorphism $\rho$ in the sense of \cite[\S 4]{BL04} which is also quasi-special has Chern class $c_1(\rho) = 0$, and a Hilbert extension of $(\mA,\rho)$ in the sense of the above-cited reference corresponds to a constant section of $\Omega_\mE \simeq$ $X^\rho \times G \backslash \sud$, $\Omega_\mE \subset \Omega_\rho$, where $\mE \in \mcE (\rho)$ is the trivial rank $d$ vector bundle over $X^\rho$. 
%
%
%
%
%
\subsubsection{A class of examples. Non-uniqueness of the Hilbert extension.} \label{ex_non_uni} Let $\mA := \mO_{\mcSUE}$, $\rho :=$ $\sigma_\mcSUE$ be defined as in \S \ref{key_group_duals} for a fixed rank $d$ vector bundle $\mE \to X$, $d \in \bN$. There is a natural identification $X \equiv X^\rho$, and $\rho$ is a special endomorphism with class $d \oplus c_1(\mE)$ (see \S \ref{sec_spe}). 
Now, if $\mE' \to X$ is an arbitrary vector bundle having the same rank and first Chern class as $\mE$, then the dual action
\begin{equation}
\label{eq_da_ex}
\mu' : \mO_{\mcSUE'} \to \mA
\end{equation}
defined as in Theorem \ref{sue_action} is an isomorphism; in fact, $\mA$ is the $\rho$-stable algebra generated by $\theta$, $( \iota , \lambda \mE )$ (see \cite[Proposition 4.17]{Vas04}), and this is exactly the image of $\mu'$ (see remarks after Theorem \ref{sue_action}). In this way, by \cite[Example 3.2]{Vas05} we find
\[
\mO_{\mE'} 
\simeq  
\mA \rtimes_{\mu'} \wa{\mcSUE'}
\]
(in particular, $\coe \simeq$ $\mA \rtimes_\mu \wa{\mcSUE}$, see also Lemma \ref{lem_ex_mg} below). Since $\mC_{\mE'} :=$ $\mO_{\mE'}' \cap \mO_{\mE'} =$ $C(X)$, we conclude that $\Omega_{\mE'} = X$, so that $\Omega_\rho$ is a disjoint union of copies of $X$:
\[
\Omega_\rho 
\ \simeq \ 
\dot{\cup}_{\mE' \in \mcE (\rho)} X
\ .
\]
This means that for every $\mE' \in \mcE (\rho)$ there is a unique section $s_{\mE'} : X \to \Omega_\rho$, with image coinciding with the copy of $X$ labelled by $\mE'$.
For each of such sections, there is a Hilbert extension $( \mO_{\mE'} , \mcSUE' )$ of $( \mA , \rho )$, not necessarily equivalent to $( \coe , \mcSUE) $. For example, take $X$ coinciding with the sphere $S^{2n}$, $n > 2$, and $\mE$ such that $\mcSUE$ is nontrivial (it is well-known that such vector bundles exist on $S^{2n}$, see \cite[I.3.13]{Kar}). Since $c_1 (\mE) =$ $c_1 (\rho) =$ $0$ (in fact, $H^2 ( S^{2n} , \bZ ) = 0$), we may pick $\mE' :=$ $S^{2n} \times \bC^d$, the associated dual action (\ref{eq_da_ex}), and obtain the "trivial" Hilbert extension $( \ C(S^{2n}) \otimes \mO_d \ , \ S^{2n} \times \sud \ )$, with $\mcSUE$ not isomorphic to $S^{2n} \times \sud$.

\subsubsection{Another class of examples. Non-existence of the Hilbert extension.} \label{ex_non_exi} Let $X$ denote the $2$-sphere. We consider a locally trivial principal $\bSO(3)$-bundle $\Omega \to X$ {\em with no sections}, and endowed with the translation action $\lambda : \bSO(3) \to {\bf aut}_X C(\Omega)$ such that $C(\Omega)^\lambda = C(X)$. 
Such a bundle exists because $H^1(X,\bSO(3)) \simeq$ $\pi_1(\bSO(3)) \simeq$ $\bZ_2$
is non trivial.
Now, $\bSO(3)$ is a quotient of $\mathbb{SU}(2)$ (in fact, $\bSO(3) \simeq$ $\mathbb{SU}(2) / \bZ_2$), thus we may lift $\lambda$ to an action $\lambda : \mathbb{SU}(2) \to {\bf aut}_X C(\Omega)$. 
%
%
By Gel'fand duality, we write $\lambda_u c (\omega) =$ $c (\omega u^{-1})$, $u \in \mathbb{SU}(2)$, $c \in C(\Omega)$, $\omega \in \Omega$.
%
%
We now consider the \sC algebra $\mB := C( \Omega , \mO_2 )$, endowed with the action
\[
\alpha : \mathbb{SU}(2) \to {\bf aut}_X \mB
\ \ , \ \
\alpha_u b (\omega) := \wa u (b(\omega u^{-1}))
\ ,
\]
$b \in \mB$, $u \in \mathbb{SU}(2)$, $\omega \in \Omega$, defined as in (\ref{eq_act_od}). As in \cite[Lemma 6.8]{Vas02g}, we find $\mA' \cap \mB =$ $\mB' \cap \mB =$ $C(\Omega)$, where $\mA :=$ $\mB^\alpha$ the fixed-point $C(X)$-algebra with respect to the $\mathbb{SU}(2)$-action.

We now equip $\mB$ with the endomorphism $\sigma (b) :=$ $\sum_{i=1}^2 \psi_i b \psi_i^*$, $b \in \bN$, where $\psi_1 , \psi_2$ are the isometries generating $\mO_2 \subset$ $\mB$. By a standard argument, we find that $\sigma$ restricts to an endomorphism $\rho \in {\bf end}_X \mA$ (see (\ref{eq_rest}) below). 
Let us denote the trivial rank $2$ vector bundle by $\mE \to X^\rho$, and its special unitary bundle by $\mcSUE =$ $X \times \mathbb{SU}(2)$. Then $\coe =$ $C ( X , \mO_2 )$ and $\mO_{\mcSUE} =$ $C(X , \mO_{\mathbb{SU}(2)} )$. If we regard $C(X)$ as the \sC subalgebra of $C(\Omega)$ of functions which are invariant with respect to the right translation action induced by $\lambda$, then we find an obvious inclusion $\mO_\mcSUE \subseteq \mA$; moreover, it is clear that $\coe \subseteq \mB$. If we denote the above-mentioned inclusions by $\mu : \mO_{\mcSUE} \to \mA$ and $j : \coe \to \mB$, then it is clear that $j(t) = \mu (t)$, $t \in \mO_\mcSUE$. Thus, by universality of the crossed product by a dual action, we conclude the following:
\begin{prop}
$\rho \in {\bf end}_X \mA$ is a special endomorphism with class $2 \oplus 0 \in$ $\bN \oplus H^2(X,\bZ)$, and $\mB$ is canonically isomorphic to the crossed product $\mA \rtimes_\mu \wa \mcSUE$.
\end{prop}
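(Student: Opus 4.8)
The plan is to realize $\mu$ as a dual $\mcSUE$-action on $(\mA,\rho)$ and then identify $\mB$ with the universal crossed product $\mA\rtimes_\mu\wa\mcSUE$ via the universal property recalled in (\ref{30}).

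First I would check that $\mu:\mO_{\mcSUE}\to\mA$ is a dual $\mcSUE$-action. It is a unital $\zro$-monomorphism by construction, and it is $\rho$-equivariant: for $t\in\mO_{\mcSUE}$ one has $\rho(\mu(t))=\sigma(\mu(t))=\sigma(j(t))=j(\sigma_\mE(t))=\mu(\sigma_{\mcSUE}(t))$, using $\sigma\circ j=j\circ\sigma_\mE$ and the fact that $\sigma_\mE$ restricts to $\sigma_{\mcSUE}$ on $\mO_{\mcSUE}$. To see that $\mu$ preserves intertwiners, I would record the fundamental relation $\psi_l b=\sigma(b)\psi_l$ for all $b\in\mB$ (immediate from $\sigma(b)\psi_l=\sum_i\psi_i b\psi_i^*\psi_l=\psi_l b$). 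Consequently every $\psi_L\psi_M^*$ with $|L|=s$, $|M|=r$ satisfies $\psi_L\psi_M^*\,\sigma^r(b)=\sigma^s(b)\,\psi_L\psi_M^*$ for all $b\in\mB$; since $(\ers)_\mcSUE\subseteq(\ers)=\mathrm{span}\{\psi_L\psi_M^*\}\subseteq\mA$, each $\mcSUE$-invariant $t$ then obeys $t\,\rho^r(a)=\rho^s(a)\,t$ for every $a\in\mA\subseteq\mB$, so $\mu(\ers)_\mcSUE\subseteq(\rhors)$ and $\mu$ is a functor $\wa\mcSUE\to\wa\rho$.

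Next I would transport the special structure of $\sigma_{\mcSUE}$ along this functor. Recall from \S\ref{sec_spe} that $(\sigma_{\mcSUE},\theta,(\iota,\lambda\mE))$ is special; accordingly I set $\eps:=\mu(\theta)$ and $\mR:=\mu(\iota,\lambda\mE)\subset\mA$. The flip $\theta\in(\ers)_\mcSUE$ (with $r=s=2$) is a selfadjoint unitary with $\theta^2=1$, hence so is $\eps\in(\rho^2,\rho^2)$, and by Remark \ref{rem_wps} (see also Lemma \ref{lem_da_dual}) it induces the permutation symmetry of $\rho$; since $\mu$ is an isometric $*$-functor, the relations (\ref{scp2}) satisfied by $\theta,(\iota,\lambda\mE)$ pass verbatim to $\eps,\mR$, so $(\rho,\eps,\mR)$ carries the special structure. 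For the class, the rank is $d=2$ because $\mE$ has fibre $\bC^2$, while $\lambda\mE$ is the top exterior power of the trivial bundle $\mE=X\times\bC^2$ and is therefore trivial; thus $c_1(\rho)=c_1(\lambda\mE)=0$, giving the class $2\oplus0\in\bN\oplus H^2(X,\bZ)$.

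For the isomorphism I would verify that $\mB$ satisfies the universal properties (\ref{30}) for the datum $(\mu,j)$: the equality $j(t)=\mu(t)$ for $t\in\mO_{\mcSUE}$ is given, and the second relation holds since $\rho(a)\psi=\sigma(a)\psi_l=\psi_l a=\psi a$ for $a\in\mA$, $\psi\in j(\wE)$, again by $\psi_l b=\sigma(b)\psi_l$. By universality of $\mA\rtimes_\mu\wa\mcSUE$ this yields a $\zro$-morphism $\Phi:\mA\rtimes_\mu\wa\mcSUE\to\mB$ that is the identity on $\mA$ and agrees with $j$ on $\coe$. The main obstacle is then to show that $\Phi$ is an isomorphism. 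Surjectivity amounts to $\mB=C^*(\mA\cup j(\coe))$: the centre $C(\Omega)=\mA'\cap\mB$ together with the constant Cuntz generators $\psi_1,\psi_2$ generate $\mB=C(\Omega,\mO_2)$, and one must check that $C(\Omega)$ is itself reached from $\zro=C(X)$, the $\mathbb{SU}(2)$-equivariant sections lying in $\mA$, and the $\psi_l$'s — equivalently that every $\mathbb{SU}(2)$-spectral subspace is built from the fundamental one $j(\wE)$ and the centre. Injectivity I would obtain from the gauge picture: by Lemma \ref{lem_sue} there is an invariant $C(X)$-functional $\varphi$ on $C(\mcSUE)$, hence an invariant mean $m_\varphi$ and faithful conditional expectations onto the fixed-point algebras, namely $\mA$ on $\mA\rtimes_\mu\wa\mcSUE$ (Remark \ref{rem_afp}) and $\mB^\alpha=\mA$ on $\mB$; as $\Phi$ is $\mcSUE$-equivariant and faithful on $\mA$, intertwining these faithful expectations forces $\Phi$ injective. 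Both of these are precisely the points settled by the general existence-and-unicity argument for crossed products by dual actions (\cite[Thm 3.11]{Vas05}), which is why the conclusion follows "by universality".
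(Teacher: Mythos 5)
Your proposal is correct and follows essentially the same route as the paper: exhibit the inclusions $\mu : \mO_{\mcSUE} \to \mA$ and $j : \coe \to \mB$ with $j|_{\mO_{\mcSUE}} = \mu$, verify the relations (\ref{30}), and invoke universality of the crossed product by a dual action. You in fact supply more detail than the paper does, since the paper delegates to the word ``universality'' precisely the two points you isolate --- surjectivity of the canonical map (that $C(\Omega)$ is recovered from $\mA$ and $j(\coe)$ via the $\mathbb{SU}(2)$-spectral-subspace argument of Lemma \ref{lem_cp_str}) and its injectivity (via the faithful invariant means of Remark \ref{rem_afp}).
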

Thus, $\Omega$ coincides with the bundle $\Omega_\mE$ defined in the previous sections, and $\Omega_\mE$ lacks of sections. Moreover, $X$ being the two sphere, every vector bundle $\mE' \to X$ with rank $2$ and trivial first Chern class is isomorphic to $\mE$ (see \cite[V.3.25]{Kar}). Thus, we conclude that $\Omega_\rho = \Omega_\mE$ lacks of sections, and $(\mA,\rho)$ does not admit Hilbert extensions.

\subsection{The model, and a duality theorem.}
\label{duality}

Let $X$ be a compact Hausdorff space $X$, and $\mZ$ an Abelian $C(X)$-algebra with identity $1$. We consider a $C(X)$-Hilbert $\mZ$-bimodule $\mM \simeq \wE \otimes_{C(X)} \mZ$ with left action $\phi :$ $\mZ \to$ $( \mM , \mM )$, defined as in Example \ref{ex_mez} for some rank $d$ vector bundle $\mE \to X$. The Cuntz-Pimsner algebra $\com$ associated with $\mM$ may be presented as the one generated by $\mZ$ and a set of generators $\left\{ \psi_l \right\}_{l=1}^n$ of $\wE$, with relations
\begin{equation}
\label{def_com}
\psi_l^* \psi_m = \left \langle \psi_l , \psi_m \right \rangle 
\ \ , \ \
\sum_l \psi_l \psi_l^* = 1
\ \ , \ \
z \psi_l = \phi (z) \psi_l
\ \ ,
\end{equation}
$z \in \mZ$, $\left \langle \psi_l , \psi_m \right \rangle \in$ $C(X)$. It is clear that $\com$ is a $C(X)$-algebra; on the other side, note that in general $\mZ$ is not contained in the centre of $\com$. It is easy to verify that the \sC bundle $\wa \mO_\mM \to X$ has fibres isomorphic to the Cuntz-Pimsner algebras $\mO_{\mM_x}$ associated with the fibres $\mM_x$, $x \in X$; we denote the fibre epimorphisms by $\eta_x : \com \to \mO_{\mM_x}$. 
Since each $\mM_x$ is isomorphic to $\bC^d \otimes \mZ_x$ as a right Hilbert $\mZ_x$-module (see Example \ref{ex_mez}), 
we find that there is a set $\left\{ \psi_{x,i} \right\}_{i=1}^d$ of orthonormal generators for $\mM_x$. At the level of 
Cuntz-Pimsner algebra, $\left\{ \psi_{x,i} \right\}_{i=1}^d$ appears as a set of orthonormal partial isometries with total 
support the identity $1_x \in \mO_{\mM_x}$; by universality of the Cuntz relations, we obtain a unital monomorphism 
$j_x : \mO_d \to \mO_{\mM_x}$.
Now, the following endomorphism is defined:
\begin{equation}
\label{def_sigma}
\tau_\mM (t) := \sum_l \psi_l t \psi_l^*
\ \ , \ \ 
t \in \com
\ .
\end{equation}
\noindent It turns out that $\tau_\mM$ is weakly-symmetric: the representation $\theta : \bP_\infty \to$ $\com$ is defined by 
\begin{equation}
\label{eq_pssm}
\theta (p) := 
\sum \psi_{l_{p(1)}} \cdots \psi_{l_{p(r)}} \psi_{l_r}^* \cdots \psi_{l_1}^*
\ \ , \ \
p \in \bP_r \subset \bP_\infty \ .
\end{equation}
Moreover, for every $x \in X$ we have a unital endomorphism
\[
\tau_x \in {\bf end} \mO_{\mM_x}
\ \ : \ \ 
\tau_x (t) := \sum_i \psi_{x,i} \ t \ \psi_{x,i}^*
\ ,
\]
in such a way that $\tau_x \circ \eta_x = \eta_x \circ \tau_\mM$, $x \in X$. By adapting a standard argument used in the setting of the Cuntz algebra, we find that if $u \in \ud$ and $u \psi_{x,i} :=$ $\sum_k \psi_{x,k} u_{ki}$ 
(where $u_{ki} \in \bC$ denotes the matrix coefficient of $u$), then
\begin{equation}
\label{rel_taux}
\tau_x (t) = 
\sum_i (u \psi_{x,i}) \ t \ (u \psi_{x,i})^*
\ .
\end{equation}
In the next Lemma, we prove that $\tau_\mM$ is special. To this end, for every $r \in \bN$ we consider the totally antisymmetric bundle $\lambda (\mE^r) \subseteq$ $\mE^{r d^r}$, and introduce the \sC subalgebra $\mS$ of $\com$ generated by elements of the spaces $( \iota , \lambda (\mE^r) ) \subseteq$ $( \iota , \tau_\mM^{rd^r} )_\theta$, $r \in \bN$.

\begin{lem}
\label{lem_om}
With the above notation, it turns out $\mS' \cap \com = \mZ$; moreover, $\tau_\mM$ is a special endomorphism, with $(\smrs) =$ $(\smrs)_\theta =$ $(\ers)$, $r,s \in \bN$.
\end{lem}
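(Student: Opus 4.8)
The plan is to realise $\coe$ as a \sC subalgebra of $\com$ and transport the special structure from it. The \sC subalgebra generated by $C(X)$ and the generators $\psi_l$ satisfies the relations (\ref{def_cp}) (here $C(X)$ is central), so it is a copy of $\coe$ — embedded faithfully because fibrewise $\mO_d \hookrightarrow \mO_{\mM_x}$ via $j_x$ — on which $\tau_\mM$ restricts to $\sigma_\mE$ and the symmetry (\ref{eq_pssm}) restricts to the flip. Since $\tau_\mM(f)=f$ for $f\in C(X)$ (by $\sum_l \psi_l\psi_l^*=1$), the base $C(X)$ is $\tau_\mM$-fixed and plays the role of the fixed centre. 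I would first fix the candidate conjugate $\mR := (\iota,\lambda\mE)\subseteq\coe$, the sections of the line bundle $\det\mE$: the identities (\ref{scp2}) are exactly those already valid for $(\sigma_\mE,\theta)$ in $\coe$ (the example closing \S\ref{sec_spe}), and since $\tau_\mM(R')=\sigma_\mE(R')$ for $R'\in\coe$ they hold verbatim in $\com$. Thus, once $(\smrs)=(\smrs)_\theta$ is known (so that $\mR\subseteq(\iota,\tau_\mM^d)_\theta$), $\tau_\mM$ is special with class $d\oplus c_1(\mE)$.

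To prove $(\ers)\subseteq(\smrs)_\theta\subseteq(\smrs)\subseteq(\ers)$, the first inclusion is the direct check that for $|L|=s$, $|M|=r$ one has $(\psi_L\psi_M^*)\,\tau_\mM^r(b)=\psi_L b\psi_M^*=\tau_\mM^s(b)\,(\psi_L\psi_M^*)$ for all $b\in\com$, using $\psi_M^*\psi_I=\langle\psi_M,\psi_I\rangle\in C(X)$, the centrality of $C(X)$, and $\sum_{|I|=r}\psi_I\psi_I^*=1$; these elements already satisfy the $\theta$-symmetry inside $\coe$ (where $\sigma_\mE$ is symmetric with symmetry $\theta$) and span $(\ers)$ over $C(X)$, giving $(\ers)\subseteq(\smrs)_\theta$. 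The middle inclusion is trivial. For $(\smrs)\subseteq(\ers)$ I would pass to the fibres $\eta_x:\com\to\mO_{\mM_x}$: decomposing $\eta_x(t)=\sum_{K,J}\psi_{x,K}c_{KJ}\psi_{x,J}^*$ (with $c_{KJ}\in\mZ_x$) by the gauge circle grading and testing the intertwining relation on $b=\psi_{x,m}$ forces $\phi_x(c_{KJ})=c_{KJ}1_d$ for every $K,J$. Hence each $c_{KJ}$ lies in $\{z\in\mZ_x:\phi_x(z)=z1_d\}$, which equals $\bC$ by the non-symmetry of the left action; so $\eta_x(t)\in\eta_x((\ers))$ for all $x$, and the continuous-field estimate $\mathrm{dist}(t,(\ers))=\sup_x\mathrm{dist}(\eta_x(t),\eta_x((\ers)))=0$ yields $t\in(\ers)$.

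For $\mS'\cap\com=\mZ$, the inclusion $\mZ\subseteq\mS'\cap\com$ follows from (\ref{eq_mg_t}) with $\mG=\mcSUE$: the generators of $\mS$ lie in $(\ers)_{\mcSUE}\subseteq(\ers)_{\mZ}$ and hence commute with the left $\mZ$-action. The reverse inclusion I would again establish fibrewise. By the Doplicher--Roberts analysis of $\sud$ (\cite{DR87,DR89A}), the \sC algebra $\eta_x(\mS)$ generated by the totally antisymmetric elements is the fixed-point algebra $\mO_{\sud}$ inside the Cuntz algebra $j_x(\mO_d)\subseteq\mO_{\mM_x}$, and $\mO_{\sud}'\cap\mO_d=\bC$. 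Decomposing an element of $\eta_x(\mS)'\cap\mO_{\mM_x}$ into homogeneous components under the circle action (each of which still commutes with the graded algebra $\eta_x(\mS)$) and using the determinant to move between degrees, one reduces to the degree-zero part and obtains $\eta_x(\mS)'\cap\mO_{\mM_x}=\mZ_x$; globalizing as before gives $\mS'\cap\com=\mZ$.

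The main obstacle is the fibrewise relative-commutant computation: importing the Doplicher--Roberts identity $\mO_{\sud}'\cap\mO_d=\bC$ into the $\mZ_x$-twisted algebra $\mO_{\mM_x}$, where the rotations $\psi_{x,i}\mapsto u\psi_{x,i}$ are \emph{not} automorphisms (equation (\ref{rel_taux}) records only the basis-independence of $\tau_x$), so the reduction to degree zero must be carried out by hand via the $\sud$-determinant and the circle grading. The second delicate input, underlying both $(\smrs)=(\ers)$ and the triviality of the centre of $\com$, is that the left $\mZ$-action is genuinely non-symmetric, i.e. $\{z\in\mZ_x:\phi_x(z)=z1_d\}=\bC$; the remaining passages from the fibres to $\com$ are routine continuous-field arguments.
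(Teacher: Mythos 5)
There are two genuine gaps, both located in the steps you yourself flag as delicate. The first concerns $\mS' \cap \com = \mZ$. Your reduction rests on identifying $\eta_x(\mS)$ with the fixed-point algebra $\mO_{\sud} \subseteq j_x(\mO_d)$, but $\mS$ is generated only by the determinant spaces $( \iota , \lambda (\mE^r) )$, $r \in \bN$ --- not by the flip $\theta$ and the $\tau$-translates of the determinant isometry that Doplicher--Roberts need in order to generate $\mO_{\sud}$ --- so fibrewise $\eta_x(\mS)$ is a \emph{proper} subalgebra of $\mO_{\sud}$. This is not a cosmetic point, because the logic then runs the wrong way: the commutant of the smaller algebra $\eta_x(\mS)$ \emph{contains} the commutant of $\mO_{\sud}$, so even a complete proof that $\mO_{\sud}' \cap \mO_{\mM_x} = \mZ_x$ only yields $\mS' \cap \com \supseteq \mZ$, whereas the content of the lemma (and what Lemma \ref{lem_ex_mg} actually uses) is that commutation with the determinant elements \emph{alone} forces membership in $\mZ$. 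The paper works directly with the generators of $\mS$: each $S_{x,r}$ satisfies $S_{x,r}^* \tau_x^{rd^r}(S_{x,r}) = (-1)^{rd^r-1} d^{-r} 1_x$, and \cite[Proposition 3.5(b)]{DPZ97} converts precisely this relation into the commutant statement; your ``move between degrees via the determinant and reduce to degree zero'' sketch would have to be replaced by, or reprove, that argument.

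The second gap is the input $\left\{ z \in \mZ_x : \phi_x(z) = z 1_d \right\} = \bC$, which you need for $(\smrs) \subseteq (\ers)$ and which you attribute to ``the non-symmetry of the left action''. Non-symmetry (the left and right actions differ) does not imply that only scalars are fixed, and it is not even a hypothesis of the setup --- the symmetric case is allowed, and there the claim fails: for $\mM = \bC^d \otimes \mZ$ with $\phi = id_\mZ$ one gets $\com = \mO_d \otimes \mZ$ and $(\smrs)$ is the $\mZ$-span of the $\psi_L \psi_M^*$, strictly larger than $(\ers)$. What is actually required is the normalization $C(X) = \left\{ z \in \mZ : \psi z = \phi(z) \psi \ , \ \psi \in \mM \right\}$, which the paper imposes explicitly as the first step of its proof (reducing the general case by pulling $\mE$ back along the spectrum $X'$ of that subalgebra). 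Once that normalization is in place, your embedding of $\coe$, the transport of the relations (\ref{scp2}) from $( \sigma_\mE , \theta )$, and the direct verification of $(\ers) \subseteq (\smrs)_\theta$ are sound; for the remaining equalities $(\smrs) = (\smrs)_\theta = (\ers)$ the paper simply quotes \cite[Lemma 5.5]{Vas04} rather than arguing fibrewise.
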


\begin{proof}
As a first step, note that we may identify $C(X)$ with the \sC algebra
\begin{equation}
\label{def_zz}
\left\{ z \in \mZ : \psi z = \phi(z) \psi , \psi \in \mM \right\}
\end{equation}
(otherwise, we may consider the spectrum $X'$ of (\ref{def_zz}), and pass to the pullback $\mE' :=$ $\mE \times_X X'$). Let $t \in$ $\mS' \cap \com$; then, for every $x \in X$ we find $\eta_x (t) \in$ $\eta_x (\mS)' \cap \mO_{\mM_x}$. Now, since each $\lambda (\mE^r) \to X$, $r \in \bN$, is a locally trivial line bundle, we find that $\eta_x (\mS)$ is generated by a sequence of partial isometries $\left\{ S_{x,r} \right\}_r$. Since $S_{x,r}$ is a generator of the totally antisymmetric space of $\mE^r_x$, we have the following analogue of (\ref{scp2}):
\[
S_{x,r}^* \tau_x^{rd^r} (S_{x,r}) = (-1)^{rd^r - 1} d^{-r} 1_x
\ .
\]
Thus, applying \cite[Proposition 3.5(b)]{DPZ97}, we conclude $\eta_x (t) \in \mZ_x$, and $t \in \mZ$.
Finally, by \cite[Lemma 5.5]{Vas04} we find $(\smrs) =$ $(\smrs)_\theta =$ $(\ers)$, so that $\tau_\mM$ has permutation symmetry; moreover, $\tau_\mM$ is also special, in fact $( \iota , \lambda \mE ) \subseteq$ $( \iota , \tau_\mM^d )$ fulfilles the required properties.
\end{proof}

Let now $p : \mG \to X$, $\mG \subseteq \mcUE$, be a compact group bundle; then, for every $x \in X$, the inclusion $G_x \subseteq \ud$ induces an action $G_x \to U \mM_x \simeq$ by right $\mZ_x$-module operators. Let $\wa \phi$ be the morphism defined in (\ref{eq_la_b}); we make the following assumptions: 
\begin{equation}
\label{eq_aez}
\left\{
\begin{array}{ll}
[ y , \wa \phi (v) ] = 0
\ \ , \ \ 
y \in \mG \ , \ v \in \mZ_{p(y)}
\\
(\mrs)_\mG \subseteq (\mrs)_\mZ
\ ,
\end{array}
\right.
\end{equation}
where $(\mrs)_\mZ$ is defined as in (\ref{def_bm}). The equality (\ref{eq_aez}.1) ensures that $\mM$ is a $\mG$-Hilbert $\mZ$-bimodule fulfilling (\ref{eq_tr_fa}), thus by Lemma \ref{lem_fa_om} there is a gauge action $\alpha : \mG \times_X \wa \mO_\mM \to$ $\wa \mO_\mM$. The equality (\ref{eq_aez}.2) ensures that $\mM_\mG^\otimes$ is a {\em tensor} \sC category, in accord with the remarks in Example \ref{ex_mez}.
Let us now consider the $C(X)$-algebra $\com^\mG \subseteq \com$ generated by the bimodules $(\mrs)_\mG$, $r,s \in \bN$. In the same way as in Lemma \ref{lem_og}, we find that if there is an invariant $C(X)$-functional defined on $\mG$, then $\com^\mG$ is the fixed-point algebra of $\com$ with respect to $\alpha$. 
If $y \in \mG_x$, $x \in X$, then 
\[
\alpha ( y , \psi_{x,i} ) = \sum_k \psi_{x,k} y_{ki} \ ;
\]
thus, by using (\ref{rel_taux}),
\begin{equation}
\label{eq_rest}
\alpha ( y , \tau_x (t) ) =
\sum_i (y \psi_{x,i}) \ \alpha ( y , t ) \ (y \psi_{x,i})^* =
\tau_x \circ \alpha ( y , t )
\ .
\end{equation}
The previous equality implies that (\ref{def_sigma}) restricts to an endomorphism $\tau_\mG \in {\bf end}_X \com^\mG$. Since $\theta (r,s) \in$ $( \smrs ) \cap \com^\mG$, we conclude that $\tau_\mG$ has weak permutation symmetry.

\begin{lem}
\label{lem_ex_mg}
Let $\mG \subseteq \mcSUE$ be a compact group bundle fulfilling (\ref{eq_aez}) and endowed with an invariant $C(X)$-functional $\varphi : C(\mG) \to C(X)$. Then the canonical endomorphism $\tau_\mG \in$ ${\bf end}_X \com^\mG$ is quasi-special, and $\com \simeq$ $\com^\mG \rtimes_\nu \wa \mG$ is a Hilbert extension of $( \com^\mG , \tau_\mG )$.
\end{lem}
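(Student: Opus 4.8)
The plan is to treat the two assertions in turn, the whole argument resting on the relative commutant identity $(\com^\mG)' \cap \com = \mZ$, which I would extract almost directly from Lemma \ref{lem_om}. The antisymmetric spaces generating $\mS$ are $\mG$-invariant, since $\mG \subseteq \mcSUE$ acts on each $\lambda(\mE^r)$ through its determinant and hence trivially; thus $\mS \subseteq \com^\mG$ and Lemma \ref{lem_om} gives $(\com^\mG)' \cap \com \subseteq \mS' \cap \com = \mZ$. For the reverse inclusion, the coefficient algebra $\mZ = (\ii)_\mG$ lies in $\com^\mG$ and, by the standing hypothesis (\ref{eq_aez}.2), commutes with every $(\mrs)_\mG$; since these spaces generate $\com^\mG$, the algebra $\mZ$ is central in $\com^\mG$, whence $(\com^\mG)' \cap \com = \mZ$ and $\mZ$ is exactly the centre of $\com^\mG$. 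Finally, the frame identity $\sum_l \psi_l \langle \psi_l, \psi \rangle = \psi$ --- legitimate because the inner products lie in the central subalgebra $C(X)$ --- yields $\tau_\mM(a)\psi = \psi a$ for $\psi \in \wE$ and arbitrary $a \in \com$; comparing the case $a = z \in \mZ$ with (\ref{def_zz}) identifies the $\tau_\mG$-fixed centre of $\com^\mG$ with $C(X)$, so that $X^{\tau_\mG} = X$.

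Granting this, I would show $\tau_\mG$ is quasi-special. Weak permutation symmetry (through $\theta$) was recorded before the statement, and the special conjugate datum is $\mR := (\iota, \lambda\mE) \subseteq (\iota, \tau_\mG^d)_\theta \subseteq \com^\mG$: the relations (\ref{scp2}) over the base $C(X)$ are inherited verbatim from the proof that $\tau_\mM$ is special (Lemma \ref{lem_om}), so the class of $\tau_\mG$ is $d \oplus c_1(\mE)$. For quasi-permutation symmetry (\ref{gps3}), given $t \in (\taurs)$ I would expand $t = \sum_{LM} \psi_L t_{LM} \psi_M^*$ over the generators of $\wE$ (using $\sum_L \psi_L\psi_L^* = 1$), with $t_{LM} := \psi_L^* t \psi_M$. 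Using $\tau_\mM^r(a)\psi_M = \psi_M a$ for $\psi_M \in \wE^r$ one checks $t_{LM} \in (\com^\mG)' \cap \com = \mZ$, and using $\psi_L t_{LM} = \tau_\mG^s(t_{LM})\psi_L$ one rewrites $t = \sum_{LM} \tau_\mG^s(t_{LM}) \psi_L \psi_M^*$. Applying the invariant mean $m_\varphi : \com \to \com^\mG$ (a $\com^\mG$-bimodule map fixing $t$) and pulling the $\mG$-invariant factors $\tau_\mG^s(t_{LM})$ out, I obtain $t = \sum_{LM} \tau_\mG^s(t_{LM})\, m_\varphi(\psi_L\psi_M^*)$ with $m_\varphi(\psi_L\psi_M^*) \in (\ers)_\mG \subseteq (\taurs)_\theta$. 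Since also $\tau_\mG^s(\mZ) \cdot (\taurs)_\theta \subseteq (\taurs)$, the inclusions close up into the equality (\ref{gps3}); the right-hand version is obtained symmetrically.

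For the crossed product I would take $\nu : \cog \to \com^\mG$ to be the restriction to $\mG$-fixed points of the canonical embedding $j : \coe \hra \com$ (injective, being fibrewise the inclusion $\mO_d \hra \mO_{\mM_x}$). Then $\nu$ is a $C(X)$-monomorphism with $\nu \circ \sigma_\mG = \tau_\mG \circ \nu$ and $\nu(\ers)_\mG \subseteq (\taurs)$ (the easy inclusion noted above), i.e. a dual action; moreover $\com$ is generated by $\com^\mG$ and $j(\coe)$ and satisfies the universal relations (\ref{30}), the first tautologically and the second because $\tau_\mG(a)\psi = \psi a$ for $\psi \in j(\wE)$, as above. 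By uniqueness of the crossed product (the argument of \cite[Thm 3.11]{Vas05}, cf. \cite[Example 3.2]{Vas05}) there is a $C(X)$-epimorphism $\com^\mG \rtimes_\nu \wa\mG \to \com$, and a fibrewise comparison --- where the point-case duality identifies $(\com^\mG)_x \rtimes \wa{G_x}$ with $\mO_{\mM_x} = \com_x$ --- upgrades it to an isomorphism. Since $(\com^\mG)' \cap \com = \mZ$ is the centre of $\com^\mG$ and $\com^\mG$ is the fixed-point algebra for the gauge action, Theorem \ref{thm_dual0} and Lemma \ref{lem_da_dual} then apply, giving $(\mrs)_\mG = (\taurs)$ and $\nu(\ers)_\mG = (\taurs)_\theta$; this exhibits $(\com \simeq \com^\mG \rtimes_\nu \wa\mG, \mG, \alpha)$ as a Hilbert extension of $(\com^\mG, \tau_\mG)$.

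The step I expect to be the main obstacle is the injectivity in this last isomorphism: the universal property only delivers a surjection, and excluding a proper quotient forces one down to the fibres, to the classical Cuntz-algebra/compact-group duality, together with continuity of the bundle $\wa\mO_\mM \to X$. The other delicate point is the honest use of (\ref{eq_aez}.2) to place $\mZ$ in the centre of $\com^\mG$ (not merely in $\mS' \cap \com$), since the identification $X^{\tau_\mG} = X$ --- and hence the base over which $\mR$ and the special conjugate relations live --- depends on it.
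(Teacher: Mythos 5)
Your proposal follows essentially the same route as the paper: the relative commutant $(\com^\mG)'\cap\com=\mZ$ via $\mS\subseteq\com^\mG$ and Lemma~\ref{lem_om}, quasi-speciality via the expansion $t=\sum_{LM}\psi_L z_{LM}\psi_M^*$ with $z_{LM}\in\mZ$ followed by the invariant mean, and the identification $\com\simeq\com^\mG\rtimes_\nu\wa\mG$ with $\nu$ the inclusion $\cog\subseteq\com^\mG$. The two points you flag as delicate (injectivity of the universal epimorphism, and placing $\mZ$ in the centre of $\com^\mG$ via (\ref{eq_aez}.2)) are precisely the steps the paper leaves implicit or defers to \cite[Thm 3.11, Example 3.2]{Vas05}, so your extra care there is a refinement rather than a deviation --- though note that at a fibre $x$ one has $\mM_x\simeq\bC^d\otimes\mZ_x$ rather than $\bC^d$, so the pointwise comparison is the point case of the crossed-product construction, not literally the classical Cuntz-algebra duality.
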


\begin{proof}
It is clear that $\tau_\mG$ is weakly special, and that $\tau_\mG^s (\mZ) ( \taurs )_\theta \subseteq$ $( \taurs )$, $r,s \in \bN$. To prove the opposite inclusion we consider a finite set $\left\{ \psi_l \right\}$ of generators for $\wE$; by construction, $\psi_l \in$ $( \iota , \tau_\mM )_\theta$ for every index $l$, so that $\psi_M \in$ $( \iota , \tau_\mM^r )_\theta$, $|M| = r$. For every $t \in ( \taurs )$, we define $z_{LM} :=$ $\psi_L^* t \psi_M$, and note that for every $b \in \com^\mG$ it turns out
\[
b z_{LM} =
b \psi_L^* t \psi_M =
\psi_L^* \tau_\mG^s (b) t \psi_M =
\psi_L^* t \tau_\mG^r (b) \psi_M =
z_{LM} b \ .
\]
In particular, $z_{LM}$ commutes with elements of $( \iota , (\lambda \mE)^r ) \subseteq$ $( \iota , \mM^{rd^r} )_\mG$, $r \in \bN$; by Lemma \ref{lem_om}, we conclude that $z_{LM} \in$ $(\com^\mG)' \cap \com =$ $\mZ$. In this way, we find $t =$ $\sum_{LM} \psi_L z_{LM} \psi_M^* =$ $\sum_{LM} \rho^s(z_{LM}) \psi_L \psi_M^*$. Note that $\psi_L \psi_M^* \in$ $( \smrs )_\theta$, thus $t \in \rho^s(\mZ) ( \smrs )_\theta$. Now, for every $r,s \in \bN$, the invariant functional $\varphi$ induces an invariant mean
\[
m: ( \smrs )_\theta \to (\taurs)_\theta \ .
\]
Since $m$ is an $\com^\mG$-module map, we obtain
\[
t = m(t) = 
\sum_{LM} m \left( \rho^s(z_{LM}) \psi_L \psi_M^* \right) =
\sum_{LM} \rho^s(z_{LM}) \ m(\psi_L \psi_M^*)
\ ,
\]
with $m(\psi_L \psi_M^*) \in$ $( \taurs )_\theta$. This proves that $\tau_\mG$ is quasi-special.
We now proceed with the proof that $\com$ is a Hilbert extension of $( \com^\mG , \tau_\mG )$. To this end, we note that by considering the inclusion $\com^\mG \subseteq \com$, it turns out $( \taurs )_\theta \subseteq$ $( \smrs )_\theta =$ $( \ers )$ (see Lemma \ref{lem_om}); in particular, it is clear by definition of $\alpha$ that $( \taurs )_\theta =$ $(\ers)_\mG$, $r,s \in \bN$. In this way, the inclusions $\cog \subseteq \com^\mG \subseteq \com$ and (\ref{def_sigma}) imply that $\com =$ $\com^\mG \rtimes_\nu \wa \mG$, where $\nu$ is the inclusion $\cog \subseteq \com^\mG$. Finally, since the \sC algebra $\mS$ is contained in $\com^\mG$ (in fact, $\mG \subseteq \mcSUE$, and every element of $\mS$ is $\mcSUE$-invariant), we conclude from Lemma \ref{lem_om} that $(\com^\mG)' \cap \com =$ $\mZ$.
\end{proof}

\begin{ex}
\label{ex_cg}
{\it
Let $\mZ$ be a unital $C(X)$-algebra, and $K$ a compact group. Then there is a discrete Abelian group $\EuFrak{C}(K)$ whose elements are classes $[v]$ of irreducible representations of $K$ with respect to a suitable equivalence relation (see \cite{BL04} and related references: $\EuFrak{C}(K)$ is called the chain group of $K$, and is isomorphic to the Pontryagin dual of the centre of $K$).
Let us now assume that there is an action $\alpha : \EuFrak{C}(K) \to {\bf aut}_X \mZ$. 
If $w$ is an irreducible representation of $K$ with rank $d$, then we consider the $C(X)$-Hilbert $\mZ$-bimodule $\mM_w :=$ $\bC^d \otimes \mZ$ with left action $z \psi :=$ $\psi \alpha_{[w]}(z)$, $\psi \in \mM_w$, $z \in \mZ$, and define $\mM :=$ $\mM_w \oplus \mM_w \oplus \mZ$ (where $\mZ$ is regarded as the free, rank one Hilbert $\mZ$-bimodule). In this way, $\mM$ becomes a $K$-Hilbert $\mZ$-bimodule, with action
\[
U : K \times \mM \to \mM
\ \ , \ \
U_y := w_y \oplus w_y \oplus 1
\ .
\]
It can be proved that $( \mrs )_K \subseteq$ $( \mrs )_\mZ$, $r,s \in \bN$, so that $\mM_K^\otimes$ is a tensor \sC category, and $\tau_K \in$ ${\bf end}_X \com^K$ is an endomorphism with permutation quasi-symmetry fulfilling $( \tau_K^r , \tau_K^s ) =$ $( \mrs )_K$, $r,s \in \bN$; moreover, $(\com^K)' \cap \com = \mZ$ (see \cite[\S 4]{LV07} for details). In particular, if $w$ takes values in $\sud$, then $\tau_K$ is quasi-special.
}
\end{ex}

We can now characterize quasi-special endomorphisms in terms of \sC dynamical systems of the type $( \com^\mG , \tau_\mG )$. The proof is a direct consequence of Theorem \ref{thm_dual0}, Eq.(\ref{def_sigmab}), Lemma \ref{lem_da_dual}, and Eq.(\ref{def_sigma}), so that it is omitted.
\begin{thm}
\label{car_rho}
Let $( \rho , \eps , \mR ) \in$ ${\bf end} \mA$ be a quasi-special endomorphism with class $d  \oplus c_1(\rho) \in$ $\bN \oplus H^2(X^\rho,\bZ)$. Then for every gauge-equivariant pair $( \mE , \mG )$ associated with $(\mA,\rho)$ (see Definition \ref{def_gauge}), there is a $\mG$-Hilbert $\mZ$-bimodule $\mM \simeq$ $\wE \otimes_{\zro} \mZ$ such that the following diagram is commutative:
\begin{equation}
\label{cd_1}
\xymatrix{
    ( \soro , \rho )
    \ar[r]^-{\subseteq}
    \ar[d]_-{\simeq}
 &  (\oro , \rho)
    \ar[d]^-{\simeq}
 \\ (\cog , \sigma_\mG)
    \ar[r]^-{\subseteq}
 &  (\cmg , \tau_\mG)
}
\end{equation}
\noindent (the symbol "$\simeq$" stands for an isomorphism) Moreover, there are isomorphisms of strict tensor \sC categories $\wa \mG \simeq \wa \rho_\eps$, $\mM_\mG^\otimes \simeq \wa \rho$ such that, for each $r,s \in \bN$, the following diagram is commutative:
\begin{equation}
\label{cd_2}
\xymatrix{
   {(\rhors)_\eps}
   \ar[d]_-{\simeq}
   \ar[r]^-{\subseteq}
&  (\rhors)
   \ar[d]^-{\simeq}
\\ (\ers)_\mG
   \ar[r]^-{\subseteq}
&  (\mrs)_\mG
}
\end{equation}
\end{thm}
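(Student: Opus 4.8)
The plan is to manufacture all the objects of the statement from the Hilbert extension attached to the gauge-equivariant pair $(\mE,\mG)$, and then to read the two diagrams off the intertwiner identifications that are already available. First I would unfold Definition~\ref{def_gauge}: a gauge-equivariant pair $(\mE,\mG)$ is by definition accompanied by a Hilbert extension $( \mA \rtimes_\nu \wa \mG , \mG , \beta )$ of $(\mA,\rho)$. Thus I may work inside $\mA \rtimes_\nu \wa \mG$, recording that $\mA' \cap ( \mA \rtimes_\nu \wa \mG ) = \mZ$, that $\nu$ is a dual action with $j|_{\cog} = \nu$, and that the endomorphism $\sigma \in {\bf end}( \mA \rtimes_\nu \wa \mG )$ of (\ref{def_sigmab}) satisfies $\sigma|_\mA = \rho$ and $\sigma \circ j = j \circ \sigma_\mE$.

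Next I would apply Theorem~\ref{thm_dual0} to this crossed product. Its three conclusions furnish exactly the data requested: Point~1 produces the finitely generated $\zro$-Hilbert $\mZ$-bimodule $\mM \subset \mA \rtimes_\nu \wa \mG$ with $\mM \simeq \wE \otimes_{\zro} \mZ$; Point~2 turns $\mM$ into a $\mG$-Hilbert $\mZ$-bimodule via the restricted gauge action $U$, with trivial action on the coefficients $\mZ$; and Point~3 yields the identification $(\mrs)_\mG = (\rhors)$ inside $\mA \rtimes_\nu \wa \mG$ for all $r,s \in \bN$. Running Lemma~\ref{lem_da_dual} in parallel — whose hypotheses, namely the dual action $\nu$ together with $\mA' \cap ( \mA \rtimes_\nu \wa \mG ) = \mZ$, are in force — supplies the isomorphism $\nu : (\ers)_\mG \to (\rhors)_\eps$ of Banach $\zro$-bimodules. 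These two statements are precisely the vertical arrows of diagram (\ref{cd_2}), the horizontal arrows being the evident inclusions; commutativity is immediate from $j|_{\cog} = \nu$, for if $s \in (\ers)_\mG$ then $\nu(s) = j(s)$ as an element of the crossed product, so realizing the bottom inclusion $(\ers)_\mG \hookrightarrow (\mrs)_\mG$ through $j$ returns the very element of $(\rhors)_\eps \subset (\rhors)$ one started with. Packaging the arrow-level maps functorially (sending $\mE^r , \mM^r \mapsto \rho^r$) then gives the isomorphisms $\wa \mG \simeq \wa \rho_\eps$ and $\mM_\mG^\otimes \simeq \wa \rho$; these are strict tensor functors because the monoidal structure is implemented on each side by a canonical endomorphism, $\sigma_\mE$ (Eq.~(\ref{def_ecoe})) and $\tau_\mM$ (Eq.~(\ref{def_sigma})) on the left and $\rho$ on the right, and the relations $\sigma|_\mA = \rho$, $\sigma \circ j = j \circ \sigma_\mE$ intertwine them.

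For diagram (\ref{cd_1}) I would pass to generated $C^*$-algebras. Since $\mM$ has support $1$ and obeys the Cuntz-Pimsner relations, the subalgebra of $\mA \rtimes_\nu \wa \mG$ it generates over $\mZ$ is a copy of $\com$ on which $\sigma$ restricts to $\tau_\mM$; its $\mG$-fixed part $\cmg = \com^\mG$ is generated by $(\mrs)_\mG = (\rhors)$ and so coincides with $\oro$, the endomorphism $\tau_\mG$ matching $\sigma|_{\oro} = \rho$. Similarly the algebra generated by $(\ers)_\mG = (\sgrs)$ is $\cog$, which $j = \nu$ carries onto the algebra generated by $(\rhors)_\eps$, namely $\soro$, with $\sigma_\mG$ matching $\rho$. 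The inclusion $\cog \subseteq \cmg$ induced by $(\ers)_\mG \subseteq (\mrs)_\mG$ then corresponds to $\soro \subseteq \oro$, which is the commuting square (\ref{cd_1}). Before invoking the Cuntz-Pimsner machinery of \S\ref{duality} I would also check that $\mM$ meets hypotheses (\ref{eq_aez}): (\ref{eq_aez}.1) is the triviality of the coefficient action from Point~2 of Theorem~\ref{thm_dual0}, while (\ref{eq_aez}.2) holds because $(\mrs)_\mG = (\rhors) \subseteq \mA$ commutes with $\mZ = \mA' \cap ( \mA \rtimes_\nu \wa \mG )$.

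The step I expect to be the real obstacle is the identification of the abstractly defined fixed-point algebra $\cmg = \com^\mG$ of \S\ref{duality} with the concretely generated algebra $\oro \subseteq \mA \rtimes_\nu \wa \mG$: this amounts to faithfulness of the representation of $\com$ on the crossed product determined by $\mM \hookrightarrow \mA \rtimes_\nu \wa \mG$, equivalently to the two continuous bundles over $X^\rho$ having matching fibres. I would settle this fibrewise, using the orthonormal Cuntz generators produced by local triviality of $\mE$ (as in the proof of Point~2 of Theorem~\ref{thm_dual0}) to compare each fibre $\mO_{\mM_x}$ with its image, together with Lemma~\ref{lem_om}, which computes the relative commutant $\mM$ determines inside $\com$; granting this, the remaining verifications are the bookkeeping indicated above.
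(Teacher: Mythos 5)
Your proposal is correct and follows essentially the same route as the paper, which omits the proof precisely because it is "a direct consequence of Theorem \ref{thm_dual0}, Eq.(\ref{def_sigmab}), Lemma \ref{lem_da_dual}, and Eq.(\ref{def_sigma})" — exactly the four ingredients you assemble. Your additional care about verifying hypotheses (\ref{eq_aez}) and identifying the abstract fixed-point algebra $\cmg$ with the concretely generated copy of $\oro$ fills in bookkeeping the paper leaves implicit, but does not constitute a different argument.
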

%
%
%
%
%
The previous theorem can be used as a starting point for the construction of an abstract duality theory for compact group bundles vs. tensor \sC categories with objects non-symmetric Hilbert bimodules. This shall be done in a forthcoming work.
%
%
{\small

}

\end{document}